\documentclass[a4paper]{article}

\usepackage{amsfonts}
\usepackage{amsthm}
\usepackage{amsmath}
\usepackage{amssymb}
\usepackage{xcolor}
\usepackage{tikz-cd}
\usepackage{enumitem}
\usepackage{scalerel}

\usepackage[margin=2.54cm]{geometry}

\numberwithin{equation}{section}
\newtheorem{theorem}{Theorem}[section]
\newtheorem*{theorem_A}{Theorem~A}
\newtheorem*{theorem_B}{Theorem~B}
\newtheorem{lemma}[theorem]{Lemma}
\newtheorem{proposition}[theorem]{Proposition}

\newtheorem{cor}[theorem]{Corollary}
\newtheorem*{cor*}{Corollary}
\theoremstyle{remark}
\newtheorem{remark}[theorem]{Remark}

\DeclareMathOperator{\Ric}{Ric}
\DeclareMathOperator{\Ad}{Ad}
\DeclareMathOperator{\vol}{vol}

\DeclareMathOperator{\II}{II}
\DeclareMathOperator{\Id}{Id}
\newcommand{\scperp}{\kern-0.5pt\scaleto{\perp}{4pt}\kern-0.5pt}
\newcommand{\scparal}{\kern-0.5pt\scaleto{/}{4pt}\mkern-1mu\scaleto{/}{4pt}}

\title{Integral Gauss formula and the Poisson equation for the $G_2$-Laplacian}

\author{Timothy Buttsworth\thanks{School of Mathematics and Statistics, The University of New South Wales,
Sydney, NSW 2052, Australia}~\thanks{Timothy Buttsworth is the recipient of an Australian Research Council Discovery Early-Career Researcher Award (DE220100919) funded by the Australian Government.} \\
\small{\texttt{t.buttsworth@unsw.edu.au}}
\and Stepan Hudecek\thanks{School of Mathematics and Physics, The University of Queensland, St Lucia,~QLD 4072, Australia} \\
\small{\texttt{s.hudecek@uq.edu.au}}
\and Artem Pulemotov\footnotemark[3]~\thanks{Research supported by the Australian Government through the Australian Research Council's Discovery Projects funding scheme (DP220102530 and DP240101772).} \\
\small{\texttt{a.pulemotov@uq.edu.au}}}

\begin{document}

\maketitle

\begin{abstract}
We produce a formula, analogous to the Gauss-Codazzi equation, which relates the geometry of a $G_2$-structure and its Hodge Laplacian to the geometry of the induced $SU(3)$-structure on an embedded hypersurface. As an application, we obtain necessary conditions for the solvability of the Poisson equation for (not necessarily closed) $G_2$-structures in a neighbourhood of this hypersurface. Next, we prove that our conditions are sufficient in the cohomogeneity one setting, assuming the symmetry group is compact and simple.
\end{abstract}

\tableofcontents

\section{Introduction and the main results}\label{sec_intro}


Let $M$ be a 7-dimensional orientable spin manifold. Given a $G_2$-structure~$\phi$ on~$M$, denote by $\Delta_\phi$ the Hodge Laplacian of the metric $g$ induced by~$\phi$. The present paper studies the nonlinear operator
\begin{align}\label{G2_Laplacian}
\phi\mapsto\Delta_\phi\phi
\end{align}
and the associated Poisson equation. We refer to this operator as the $G_2$-Laplacian. It plays a prominent role in $G_2$-geometry, particularly via the analysis of the associated heat flow; see the survey~\cite{L20}. Our first objective is to produce an integral Gauss formula for the operator~\eqref{G2_Laplacian}, linking it to the natural Laplacian on a compact orientable hypersurface~$\Sigma$ in~$M$. Once this is achieved, we demonstrate that our formula provides an integrability condition for the Poisson equation associated with~\eqref{G2_Laplacian} in the case where $M$ is a cohomogeneity one space
and $\Sigma$ is a principal orbit.

A $G_2$-structure $\phi$ on $M$ yields an $SU(3)$-structure $(\omega,\Omega)$ on~$\Sigma$; see, e.g.,~\cite{MC06}. This observation has led to numerous advances in $G_2$-geometry. Examples include Hitchin's construction of $G_2$-manifolds from half-flat $SU(3)$-structures and its extensions (see, e.g.,~\cite{FT08,CLSSH11,CN21,S24} and references therein), new constructions of mirror dual Calabi--Yau manifolds (see~\cite{AS10} and also~\cite{BM03}), improved understanding of 7-manifolds with boundary~\cite{D17,D18}, the discovery of a new flow of $SU(3)$-structures~\cite{F22} and the development of connections to spin geometry~\cite{ACFH15}. As shown by Mart\'in Cabrera~\cite{MC06}, one can establish strong links between $\phi$ and $(\omega,\Omega)$ using the shape operator of the hypersurface~$\Sigma$. The geometry of $M$ is related to that of $\Sigma$ via the Gauss--Codazzi equations. Developing these connections further, we find a link between the $G_2$-Laplacian~\eqref{G2_Laplacian} and the natural Laplacian on~$\Sigma$ in the spirit of the classical Gauss formula. This is our first main result, presented as Theorem~A below. While results of this nature are available for the Hodge Laplacian~\cite{CC25}, our formula appears to be the first obtained in the nonlinear framework of $G_2$-geometry.

To state Theorem~A, we need additional notation. In what follows, we choose a background metric~$g_0$ and an orientation on~$M$. Let $d$ and $\delta$ be the exterior derivative on $M$ and the codifferential with respect to~$g_0$. Denote by $i$ the embedding $\Sigma\hookrightarrow M$. The metric $g_0$ induces a metric $g_0^\Sigma=i^*g_0$. Fix an orientation on~$\Sigma$. We will use the same notation $\langle\cdot,\cdot\rangle$ for the scalar products with respect to $g_0$ and~$g_0^\Sigma$. This will not cause confusion. Let $d^{\Sigma}$ be the exterior derivative on~$\Sigma$. Finally, denote by $\delta^\Sigma$ and $\Delta^\Sigma$ the codifferential and the Hodge Laplacian associated with~$g_0^\Sigma$.

To obtain the formula in Theorem~A, we must integrate over~$\Sigma$. This makes our result reminiscent of integral formulas for the scalar curvature of a manifold with a $G_2$-structure; see~\cite{MC22} and references therein. In what follows, integration is always carried out with respect to the volume form of~$g_0^\Sigma$, and $\|\cdot\|_{L^2}$ denotes the $L^2$-norm on the sections of the tensor bundle over~$\Sigma$. The pointwise variant of the formula in Theorem~A appears in Proposition~\ref{prop_pwise_Gauss}.

Fix a unit normal vector field $\mathbf n$ on~$\Sigma$ with respect to~$g_0$, and let $\nu=\mathbf{n}^\flat$ be its dual form field. On~$\Sigma$, we can decompose every $\varphi\in\Omega^k(M)$ for $k=1,\ldots,7$ into the sum
\begin{align*}
\varphi=\nu\wedge\varphi_{\scperp}+\varphi_{\scparal},
\end{align*}
where $\varphi_{\scperp}=\iota_\mathbf n\varphi$ and $\iota_{\mathbf n}\varphi_{\scparal}=0$. It will be convenient for us to identify the forms $\varphi_{\scperp}$ and $\varphi_{\scparal}$ with the forms they induce on~$\Sigma$. This means that $\varphi_{\scperp}=i^*(\iota_\mathbf n\varphi)\in\Omega^{k-1}(\Sigma)$ and $\varphi_{\scparal}=i^*\varphi\in\Omega^k(\Sigma)$.

\begin{theorem_A}
Consider a manifold $M$ with a $G_2$-structure~$\phi$ and a compact orientable embedded hypersurface $\Sigma$ in~$M$. Choose the background metric $g_0$ to coincide with the metric $g$ induced by~$\phi$. The $G_2$-Laplacian satisfies the formula
    \begin{equation}\label{Gauss_formula}
    \begin{split}
        \int_\Sigma \langle\Delta_\phi\phi,\phi\rangle = \int_\Sigma\langle \Delta^\Sigma\phi_{\scparal},\phi_{\scparal}\rangle -\int_\Sigma\langle\Delta^\Sigma\phi_{\scperp},\phi_{\scperp}\rangle &+2\int_\Sigma\langle (\Delta_\phi\phi)_{\scperp},\phi_{\scperp}\rangle \\
        &+ \big\|(d\phi)_{\scperp}+d^\Sigma\phi_{\scperp}\big\|^2_{L^2} - 
        \big\|(\delta\phi)_{\scparal} - \delta^\Sigma\phi_{\scparal}\big\|^2_{L^2}.
    \end{split}        
    \end{equation}
\end{theorem_A}

Exploiting the definition of the Hodge Laplacian and the compactness of $\Sigma$, we can transform formula~\eqref{Gauss_formula} as in the following corollary. If the $G_2$-structure $\phi$ is closed, which is the case of interest in many papers involving the operator~\eqref{G2_Laplacian}, then several terms disappear.

\begin{cor}\label{cor_ThmA}
Within the conditions of the theorem,
\begin{equation*}
\begin{split}
\int_\Sigma \langle\Delta_\phi\phi,\phi\rangle -2\int_\Sigma\langle (\Delta_\phi\phi)_{\scperp},\phi_{\scperp}\rangle 
= \big\|d^\Sigma\phi_{\scparal}\big\|_{L^2}^2&+\big\|\delta^\Sigma\phi_{\scparal}\big\|_{L^2}^2-\big\|d^\Sigma\phi_{\scperp}\big\|_{L^2}^2-\big\|\delta^\Sigma\phi_{\scperp}\big\|_{L^2}^2
\\
&+ \big\|(d\phi)_{\scperp}+d^\Sigma\phi_{\scperp}\big\|^2_{L^2} - 
\big\|(\delta\phi)_{\scparal} - \delta^\Sigma\phi_{\scparal}\big\|^2_{L^2}.
\end{split}
\end{equation*}
If $d\phi=0$, then
\begin{equation*}
\begin{split}
\int_\Sigma \langle\Delta_\phi\phi,\phi\rangle -2\int_\Sigma\langle (\Delta_\phi\phi)_{\scperp},\phi_{\scperp}\rangle 
= \big\|\delta^\Sigma\phi_{\scparal}\big\|_{L^2}^2-\big\|\delta^\Sigma\phi_{\scperp}\big\|_{L^2}^2
- 
\big\|(\delta\phi)_{\scparal} - \delta^\Sigma\phi_{\scparal}\big\|^2_{L^2}.
\end{split}
\end{equation*}
\end{cor}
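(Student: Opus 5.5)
The corollary follows from Theorem~A in two moves: integrate the two Laplacian terms on $\Sigma$ by parts, then drop the terms that vanish when $d\phi=0$.

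\textbf{Step 1: rewrite the Laplacian terms.} Since $\Sigma$ is compact and without boundary, $\delta^\Sigma$ is the formal $L^2$-adjoint of $d^\Sigma$ with respect to $g_0^\Sigma$. The Hodge Laplacian is $\Delta^\Sigma=d^\Sigma\delta^\Sigma+\delta^\Sigma d^\Sigma$, so for every form $\alpha$ on $\Sigma$,
\begin{equation*}
\int_\Sigma\langle\Delta^\Sigma\alpha,\alpha\rangle=\big\|d^\Sigma\alpha\big\|_{L^2}^2+\big\|\delta^\Sigma\alpha\big\|_{L^2}^2 .
\end{equation*}
Apply this with $\alpha=\phi_{\scparal}$ and with $\alpha=\phi_{\scperp}$, and substitute into formula~\eqref{Gauss_formula}. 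Then move the term $2\int_\Sigma\langle(\Delta_\phi\phi)_{\scperp},\phi_{\scperp}\rangle$ to the left-hand side. This gives the first identity exactly.

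\textbf{Step 2: the closed case.} Assume $d\phi=0$. Then $(d\phi)_{\scperp}=0$. Moreover, pullback commutes with the exterior derivative, so
\begin{equation*}
d^\Sigma\phi_{\scparal}=d^\Sigma i^*\phi=i^*d\phi=0 .
\end{equation*}
Consequently the term $\big\|d^\Sigma\phi_{\scparal}\big\|_{L^2}^2$ vanishes, and $\big\|(d\phi)_{\scperp}+d^\Sigma\phi_{\scperp}\big\|_{L^2}^2$ reduces to $\big\|d^\Sigma\phi_{\scperp}\big\|_{L^2}^2$. This exactly cancels the term $-\big\|d^\Sigma\phi_{\scperp}\big\|_{L^2}^2$. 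What remains is the second identity.

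Neither step is a real obstacle. The only points to confirm are the adjointness of $d^\Sigma$ and $\delta^\Sigma$, which holds because $\Sigma$ has no boundary, and the identification $\phi_{\scparal}=i^*\phi$, which is how the paper defines it.
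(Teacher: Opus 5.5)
Your proposal is correct and follows essentially the same route as the paper: integrate $\langle\Delta^\Sigma\cdot,\cdot\rangle$ by parts using the $L^2$-adjointness of $d^\Sigma$ and $\delta^\Sigma$ on the compact hypersurface, then in the closed case use $d^\Sigma\phi_{\scparal}=i^*d\phi=0$. You also make explicit that $(d\phi)_{\scperp}=0$, which is what cancels the two $\|d^\Sigma\phi_{\scperp}\|_{L^2}^2$ terms; the paper uses this fact without stating it.
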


Recall the Gauss--Codazzi equation
\begin{align}\label{GaussCodazzi}
S=S^\Sigma+2\Ric(\mathbf n,\mathbf n)+\|\II\|^2-H^2,
\end{align}
where $S$ and $S^\Sigma$ are the scalar curvatures of $M$ and~$\Sigma$, $\II$ is the second fundamental form of~$\Sigma$, and $H$ is the mean curvature. Remarkably, formula~\eqref{Gauss_formula} bears several commonalities with~\eqref{GaussCodazzi}, such as the coefficient 2 at the normal component of $\Delta_\phi\phi$ and the signs preceding the squared norms. We will reveal another connection between~\eqref{Gauss_formula} and~\eqref{GaussCodazzi} below.

Given a 3-form $\eta$ on the manifold~$M$, consider the Poisson equation
\begin{align}\label{Poisson_equation}
\Delta_\phi\phi=\eta.
\end{align}
The analysis of this equation is an essential step towards understanding the $G_2$-Laplacian~\eqref{G2_Laplacian}, particularly its image. We speculate that it is possible to gain information about the heat flow of~\eqref{G2_Laplacian} and to construct $G_2$-structures with various desirable properties via iterative procedures involving~\eqref{Poisson_equation}. For example, suppose that one can prove the existence and the convergence of a sequence $(\phi_n)_{n=1}^\infty$ such that
\begin{align*}
\Delta_{\phi_n}\phi_n=\phi_{n-1},\qquad n=2,3,\ldots.
\end{align*}
If the limit is a $G_2$-structure, then it must satisfy the eigenform equation for~\eqref{G2_Laplacian}, which suggests that it may be related to Laplacian solitons or nearly parallel $G_2$-structures. Similar ideas underpin the concept of the Ricci iteration introduced in~\cite{R07}. The study of this concept has yielded numerous insights into the Ricci flow and the Einstein equation; see, e.g.,~\cite{DR19,PR19,BH21}. The first- and last-named authors recently established the solvability of~\eqref{Poisson_equation} in a neighbourhood of a point assuming that the 3-form $\eta$ on the right-hand side was positive and closed~\cite{BP25}. In fact, they proved the existence of two closed $G_2$-structures that both satisfied~\eqref{Poisson_equation} but induced different orientations on~$M$. Subsequently, the second-named author investigated the global solvability of~\eqref{Poisson_equation} on homogeneous spheres~\cite{H25}. He obtained a series of existence theorems, along with uniqueness and non-uniqueness results.

The present paper studies solutions to~\eqref{Poisson_equation} in a neighbourhood of a hypersurface~$\Sigma\subset M$, particularly in the case where $M$ is a cohomogeneity one manifold and $\Sigma$ is a principal orbit. In order to classify these solutions, we impose Cauchy conditions to be satisfied on~$\Sigma$. More precisely, choose a $G_2$-structure~$\psi$ on~$M$. Assume that the background metric $g_0$ coincides with the metric induced by~$\psi$. Let $\psi^+$ and $\psi^-$ be fixed differential forms on $\Sigma$ of degrees 3 and 2, respectively. We demand that
\begin{align}\label{initial_conditions}
\phi|_{\Sigma}=\psi|_{\Sigma},\qquad
(d\phi)_{\scperp}=\psi^+,\qquad (\delta\phi)_{\scparal}=\psi^-,
\end{align}
where $|_\Sigma$ denotes the restriction to the pullback bundle $i^*(\Lambda^3(TM))$.
These formulas are a combination of absolute and relative boundary conditions on differential forms that generalise classic Neumann and Dirichlet boundary conditions on scalar functions and ensure the self-adjointness of the Hodge Laplacian. Notably, absolute and relative conditions have arisen previously in $G_2$-geometry and other nonlinear theories;
see, e.g.,~\cite{D18,M92,P08,CG13}.

Following an argument by Bryant, we prove in Proposition~\ref{prop_1st_order_cond} that
\begin{equation} \label{first order condition}
\langle(d\phi)_{\scperp},\phi_{\scparal}\rangle = -\langle(\delta\phi)_{\scparal},\phi_{\scperp}\rangle.
\end{equation}
This formula and Theorem~A readily provide necessary conditions on $\eta$, $\psi$, $\psi^+$ and $\psi^-$ for the solvability of the Cauchy problem~\eqref{Poisson_equation}--\eqref{initial_conditions}. Our second main result, which is the core of Theorem~B below, asserts that these conditions are also sufficient in the cohomogeneity one setting. More precisely, let $G$ be a compact connected simple Lie group acting on~$M$. If the principal orbits of this action are hypersurfaces in~$M$, then we call $M$ a cohomogeneity one manifold. The analysis of $G_2$-structures on such manifolds has led to numerous breakthroughs in $G_2$-geometry. This includes the constructions of complete metrics with holonomy $G_2$ in \cite{BS89,B13,FHN21}, the first non-trivial compact immortal $G_2$-Laplacian flow converging to a torsion-free $G_2$-structure in~\cite{HWY18}, and solitons of the $G_2$-Laplacian flow in~\cite{HN25,HJN25}. It is customary to call $G$ the symmetry group of $M$ and think of $G$-invariant tensors on $M$ as those that respect the symmetry. We assume in Theorem~B that the normal part of the form $\eta$ on the right-hand side of~\eqref{Poisson_equation} is nondegenerate.
This aligns with the nondegeneracy assumptions arising in previous works on~\eqref{Poisson_equation}, such as~\cite{BP25,H25}, and in the study of the prescribed Ricci curvature problem, as discussed below.


\begin{theorem_B}
Consider a manifold $M$ with a $G_2$-structure $\psi$ and a compact orientable embedded hypersurface $\Sigma$ in~$M$. Choose the background metric $g_0$ to coincide with the metric induced by~$\psi$. If problem~(\ref{Poisson_equation})--(\ref{initial_conditions}) has a solution in a neighbourhood of~$\Sigma$, then
\begin{equation}\label{suff_con}
\begin{split}
\langle\psi^+,\psi_{\scparal}\rangle &= -\langle\psi^-,\psi_{\scperp}\rangle,
\\
\int_\Sigma \langle\eta,\nu\wedge\psi_{\scperp}+\psi_{\scparal}\rangle -2\int_\Sigma\langle \eta_{\scperp},\psi_{\scperp}\rangle
&= \big\|d^\Sigma\psi_{\scparal}\big\|_{L^2}^2+\big\|\delta^\Sigma\psi_{\scparal}\big\|_{L^2}^2-\big\|d^\Sigma\psi_{\scperp}\big\|_{L^2}^2-\big\|\delta^\Sigma\psi_{\scperp}\big\|_{L^2}^2
\\
&\hphantom{=}~+ \big\|\psi^++d^\Sigma\psi_{\scperp}\big\|^2_{L^2} - 
\big\|\psi^- - \delta^\Sigma\psi_{\scparal}\big\|^2_{L^2}.
\end{split}
\end{equation}
If $M$ is cohomogeneity one with the symmetry group $G$ compact, connected and simple, $\Sigma$ is a principal orbit, the data $\eta$, $\psi$, $\psi^+$ and $\psi^-$ are $G$-invariant, and the inequality
\begin{align}\label{nondeg_assumtion}
\langle \eta_{\scperp},\psi_{\scperp}\rangle\ne0
\end{align}
and formulas~(\ref{suff_con}) hold, then $\Sigma$ has a neighbourhood in which problem~(\ref{Poisson_equation})--(\ref{initial_conditions}) possesses a unique $G$-invariant solution~$\phi$.
\end{theorem_B}

One can state an analogue of Theorem~B for \emph{closed} $G_2$-structures. We discuss this in Section~\ref{sec_closed}.

Theorem~B bears an intriguing resemblance to the existence and uniqueness results for the prescribed Ricci curvature equation obtained by the last-named author in earlier papers. Specifically, according to~\cite[Remark~2]{P13} and~\cite[Proposition~3.9]{P16}, the Gauss--Codazzi formula~\eqref{GaussCodazzi} yields the integrability condition for the problem
\begin{align*}
\Ric\gamma=T,\qquad \gamma^\Sigma=\hat\gamma,\qquad \II=\hat\II,
\end{align*}
in the cohomogeneity one setting. Here, the unknown is the Riemannian metric $\gamma$ on~$M$, the notation $\gamma^\Sigma$ stands for the restriction of $\gamma$ to~$\Sigma$, and $\hat\gamma$ and $\hat\II$ are a fixed metric and a fixed bilinear form on~$\Sigma$. The results of~\cite{P13,P16} rely on the nondegeneracy assumption
\begin{align*}
T(\mathbf n,\mathbf n)\ne0
\end{align*}
similar to~\eqref{nondeg_assumtion}. Several works, particularly those by DeTurck and his collaborators (see, e.g.,~\cite[Chapter~5]{B87} and~\cite{DTG99}), demonstrate that such assumptions are an essential part of the theory.

The resemblance between Theorem~B and the results of~\cite{P13,P16} underscores the connection, alluded to above, between our integral Gauss formula~\eqref{Gauss_formula} and the Gauss--Codazzi equation~\eqref{GaussCodazzi}. One more parallel between our analysis of~\eqref{Poisson_equation} and the study of Ricci curvature emerges from Anderson--Herzlich's paper~\cite{AH08} devoted to unique continuation properties of the Einstein equation. The uniqueness part of Theorem~B may be viewed as a $G_2$-geometric cohomogeneity one counterpart to their Proposition~3.7.

\section{Integral Gauss formula}

The purpose of this section is to prove Theorem A and Corollary~\ref{cor_ThmA}, thus relating the geometry of 
the manifold $M$ to the geometry of the hypersurface~$\Sigma$.

\subsection{Decompositions of first-order operators}

Let us lay some groundwork. We will perform our computations in a frame $\{X_i\}_{i=0}^{6}$ on a neighbourhood $U$ of an arbitrarily chosen point $p\in\Sigma$. We assume that $\{X_i\}_{i=0}^{6}$ is orthonormal with respect to the metric~$g$ and the restriction $X_0\vert_{U\cap \Sigma}$ equals~$\mathbf{n}$. The form field dual to $X_0$ on~$U$ coincides with $\nu$ on $U\cap\Sigma$. We preserve the notation $\nu$ for this form field.

Every $\varphi\in\Omega^k(U)$ appears as
\begin{equation*}
\varphi= \nu\wedge\varphi_N + \varphi_T,
\end{equation*}
where $\varphi_N\in\Omega^{k-1}(U)$ and $\varphi_T\in\Omega^k(U)$ are such that $\iota_{X_0}\,\varphi_N=\iota_{X_0}\,\varphi_T=0$. While $\varphi_N$ and $\varphi_T$ may depend on the choice of the frame, their restrictions to $U\cap\Sigma$ satisfy
\begin{align*}
\varphi_N|_{U\cap\Sigma} = \varphi_{\scperp},\qquad \varphi_T|_{U\cap\Sigma} = \varphi_{\scparal}.
\end{align*}
Let us split the exterior derivative $d$ and the codifferential $\delta$ into their ``normal" and ``tangential" components. Define 
\begin{equation*}
d^N\varphi = \nu\wedge(d\varphi_T)_N, \qquad  d^T = d-d^N, \qquad \delta^T\varphi =(-1)^k*^{-1}d^T*\varphi, \qquad \delta^N\varphi =(-1)^k*^{-1}d^N*\varphi,
\end{equation*}
where $*$ is the Hodge star on $M$ given by~$g$. Clearly,
\begin{align*}
    \delta=(-1)^k*^{-1}d\,* = (-1)^k*^{-1}d^T*+(-1)^k*^{-1}d^N* = \delta^T+\delta^N. 
\end{align*}
One can easily check that the Leibnitz rule holds for $d^N$ and~$d^T$. Next,
define $*^T$ to be the unique linear operator such that
\begin{equation*}
\sigma_T \wedge*^T\varphi_T = \langle\sigma_T,\varphi_T\rangle (\operatorname{vol}_M)_N, \qquad 
*^T (\nu\wedge \varphi_T)=0,
\end{equation*}
for all $\varphi,\sigma\in\Omega^k(U)$, where $\vol_M$ is the volume form of~$g$. The lemma below explains how $*^T$ relates to the Hodge star.

\begin{lemma}\label{relation between stars}
The following identities hold for the all $\varphi,\sigma\in \Omega^k(U)$:
\begin{enumerate}[label=(\roman*)]
\item $\sigma_T\wedge*^T\varphi_T = \varphi_T\wedge*^T \sigma_T = (-1)^k *^T\sigma_T\wedge\varphi_T$
\item $*\varphi_T = *^T\varphi_T\wedge \nu$
\item $*\,(\nu\wedge\varphi_T) = *^T\varphi_T$
\item $*\varphi = *^T\varphi_T\wedge \nu + *^T\varphi_N = (-1)^k\nu\wedge*^T\varphi_T + *^T\varphi_N$
\end{enumerate}
\end{lemma}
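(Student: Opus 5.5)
We work pointwise in the orthonormal frame $\{X_i\}_{i=0}^6$ with dual coframe $\{e^i\}$, $e^0=\nu$, and take $\vol_M=\nu\wedge e^{1}\wedge\cdots\wedge e^6$, so that $(\vol_M)_N=e^1\wedge\cdots\wedge e^6$. This is the volume form of the six-dimensional span of $e^1,\ldots,e^6$. All four identities are linear in $\varphi$ and $\sigma$, so it suffices to check them on basis forms $e^I=e^{i_1}\wedge\cdots\wedge e^{i_k}$ with $I\subset\{1,\ldots,6\}$, together with $\nu\wedge e^J$. The first step is to identify $*^T$ explicitly. The defining relation $\sigma_T\wedge*^T\varphi_T=\langle\sigma_T,\varphi_T\rangle(\vol_M)_N$ says that on tangential forms $*^T$ is exactly the six-dimensional Hodge star $\star_6$ of the span of $e^1,\ldots,e^6$ with volume form $(\vol_M)_N$. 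The condition $*^T(\nu\wedge\varphi_T)=0$ extends it by zero on normal components. Thus $*^T e^I=\epsilon_I e^{I^c}$, where $e^I\wedge e^{I^c}=\epsilon_I\,(\vol_M)_N$ and $I^c$ is the complement of $I$ in $\{1,\ldots,6\}$. Uniqueness follows from nondegeneracy of the wedge pairing on the six-dimensional space.

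For (i), the first equality is the symmetry of $\langle\cdot,\cdot\rangle$. The second is graded commutativity: $*^T\sigma_T$ has degree $6-k$, so $\varphi_T\wedge*^T\sigma_T=(-1)^{k(6-k)}*^T\sigma_T\wedge\varphi_T=(-1)^{k}*^T\sigma_T\wedge\varphi_T$, since $k(6-k)\equiv k^2\equiv k \pmod 2$.

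For (ii) and (iii), we use the characterisation $\alpha\wedge*\beta=\langle\alpha,\beta\rangle\vol_M$ on $M$. For (ii), test $*^T\varphi_T\wedge\nu$ against an arbitrary $\sigma=\nu\wedge\sigma_N+\sigma_T$. The term $\nu\wedge\sigma_N\wedge*^T\varphi_T\wedge\nu$ vanishes. The remaining term is $\sigma_T\wedge*^T\varphi_T\wedge\nu=\langle\sigma_T,\varphi_T\rangle(\vol_M)_N\wedge\nu$. Since $(\vol_M)_N\wedge\nu=(-1)^6\nu\wedge(\vol_M)_N=\vol_M$, this equals $\langle\sigma,\varphi_T\rangle\vol_M$, which gives (ii). For (iii), test $*^T\varphi_T$ against $\sigma$ of degree $k+1$. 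The tangential part $\sigma_T\wedge*^T\varphi_T$ is a tangential $7$-form and hence vanishes. The remaining term is $\nu\wedge\sigma_N\wedge*^T\varphi_T=\langle\sigma_N,\varphi_T\rangle\vol_M=\langle\sigma,\nu\wedge\varphi_T\rangle\vol_M$, using that $\nu\wedge\cdot$ is an isometry from tangential forms onto normal ones.

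Identity (iv) then follows from linearity of $*$ applied to $\varphi=\nu\wedge\varphi_N+\varphi_T$, using (ii) and (iii) with $\varphi_N$ in place of $\varphi_T$. Here we note that $*^T\varphi_N$ makes sense because $\varphi_N$ is tangential of degree $k-1$. Finally, $*^T\varphi_T\wedge\nu=(-1)^{6-k}\nu\wedge*^T\varphi_T=(-1)^k\nu\wedge*^T\varphi_T$.

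The only real obstacle is sign bookkeeping. This means fixing the orientation convention $\vol_M=\nu\wedge(\vol_M)_N$ consistently with the meaning of $(\vol_M)_N$, that is, $\iota_{X_0}\vol_M$. Everything else is a routine check.
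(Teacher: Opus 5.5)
Your proposal is correct and follows essentially the paper's own argument: (ii) and (iii) are checked by pairing against an arbitrary $\xi=\nu\wedge\xi_N+\xi_T$ and using the defining relations of $*$ and $*^T$ together with $\vol_M=\nu\wedge(\vol_M)_N$, and (iv) then follows by linearity. Your explicit identification of $*^T$ with the six-dimensional Hodge star and the parity check $k(6-k)\equiv k \pmod 2$ for (i) only spell out what the paper treats as immediate. The identification also makes clear that $*^T\varphi_T$ is tangential, which is what actually kills $\xi_T\wedge *^T\varphi_T$ in (iii).
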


\begin{proof}
Identity \emph{(i)} is an immediate consequence of the definition of~$*^T$. To prove~\emph{(ii)}, observe that
\begin{align*}
\xi\wedge*^T\varphi_T\wedge \nu &= (\nu\wedge\xi_N+\xi_T)\wedge*^T\varphi_T \wedge \nu
\\
&= \xi_T\wedge*^T\varphi_T\wedge \nu = \langle\xi_T,\varphi_T\rangle(\operatorname{vol}_M)_N\wedge \nu = \langle\xi,\varphi_T\rangle\operatorname{vol}_M=\xi\wedge*\varphi_T
\end{align*}
for every $\xi\in\Omega^k(U)$.
Similarly,~\emph{(iii)} holds because
\begin{align*}
\xi\wedge*^T\varphi_T 
&= (\nu\wedge\xi_N+\xi_T)\wedge*^T\varphi_T 
= \nu\wedge\xi_N\wedge*^T\varphi_T 
\\
&= \nu\wedge\langle\xi_N,\varphi_T\rangle(\operatorname{vol}_M)_N= \langle \nu\wedge\xi_N, \nu\wedge\varphi_T\rangle\operatorname{vol}_M = \langle \xi, \nu\wedge\varphi_T\rangle\operatorname{vol}_M
\end{align*}
for $\xi\in\Omega^{k+1}(U)$. The second equality in this chain relies on the fact that $\iota_{X_0}\,\xi_T=\iota_{X_0}\,\varphi_T=0$, and the fourth uses the property $\langle\nu,\nu\rangle=1$. Finally, \emph{(iv)} is a consequence of \emph{(ii)} and~\emph{(iii)}.
\end{proof}

Our next lemma demonstrates that the operators $d^T,\delta^T$ and $*^T$ can be interpreted as extensions of $d^{\Sigma}$, $\delta^{\Sigma}$ and the Hodge star $*^{\Sigma}$ given by~$g^\Sigma$.

\begin{lemma}\label{Restriction of adapted frames}
For the all $\varphi\in \Omega^k(U)$,
\begin{align*}
(d^T\varphi_T)|_\Sigma = d^\Sigma\varphi_{\scparal},\qquad
(*^T\varphi_T)|_\Sigma = *^\Sigma\varphi_{\scparal},\qquad
(\delta^T\varphi_T)|_\Sigma = \delta^\Sigma\varphi_{\scparal}.
\end{align*}
\end{lemma}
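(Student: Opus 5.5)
Here $|_\Sigma$ means pulling back by~$i$. For any $\xi\in\Omega^\bullet(U)$ we have $i^*\xi=i^*(\xi_T)$, because $i^*\nu=0$ (on $\Sigma$, $\nu$ annihilates $T\Sigma$). Moreover, a form $\xi$ with $\iota_{X_0}\xi=0$ is determined along $\Sigma$ by~$i^*\xi$. Indeed, at points of $\Sigma$ it is a form on $\mathbf n^\perp=T\Sigma$. So each identity reduces to computing $i^*$ of the left-hand side, and we use that $i^*$ commutes with the exterior derivative.

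\emph{First identity.} Since $d^T\varphi_T=d\varphi_T-\nu\wedge(d\varphi_T)_N$ and $i^*\nu=0$, we get $i^*(d^T\varphi_T)=i^*d\varphi_T=d^\Sigma i^*\varphi_T=d^\Sigma\varphi_{\scparal}$. Here we use $i^*\varphi_T=i^*\varphi=\varphi_{\scparal}$. We also need $d^T\varphi_T$ to have no $\nu$-component, so that it is the tangential object identified with its pullback. This holds because $d^T\varphi_T=(d\varphi_T)_T$ by construction.

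\emph{Second identity.} The form $*^T\varphi_T$ is characterised by $\sigma_T\wedge *^T\varphi_T=\langle\sigma_T,\varphi_T\rangle(\vol_M)_N$ and $\iota_{X_0}*^T\varphi_T=0$. The second property follows from Lemma~\ref{relation between stars}(iii), which gives $*^T\varphi_T=*(\nu\wedge\varphi_T)$, together with the fact that this is orthogonal to $\nu$-containing forms. Next, $(\vol_M)_N=\iota_{X_0}\vol_M$ restricts on $\Sigma$ to $\iota_{\mathbf n}\vol_M$, whose pullback is $\vol_\Sigma$ for the chosen orientations. Since $g$ restricted to $T\Sigma=\mathbf n^\perp$ is $g^\Sigma$, pulling back the defining relation gives $\sigma_{\scparal}\wedge i^*(*^T\varphi_T)=\langle\sigma_{\scparal},\varphi_{\scparal}\rangle\vol_\Sigma$ for all tangential $\sigma$. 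This is exactly the defining property of $*^\Sigma\varphi_{\scparal}$, so uniqueness of the Hodge star on $\Sigma$ gives the claim. One point needs care: the sign/orientation convention relating $\iota_{\mathbf n}\vol_M$ to $\vol_\Sigma$. If the conventions differ, an overall sign appears, and it would cancel in the third identity anyway.

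\emph{Third identity.} This is the main step. The plan is to write $\delta^T\varphi_T=(-1)^k*^{-1}d^T*\varphi_T$ and use Lemma~\ref{relation between stars}(ii): $*\varphi_T=*^T\varphi_T\wedge\nu$. Then we apply the Leibniz rule for~$d^T$:
\[
d^T(*^T\varphi_T\wedge\nu)=d^T(*^T\varphi_T)\wedge\nu\pm *^T\varphi_T\wedge d^T\nu.
\]
The first term is tangential and wedged with $\nu$. We expect $d^T\nu$ to vanish: $d^T\nu=d\nu-\nu\wedge(d\nu_T)_N$ with $\nu_T=0$, so $d^T\nu=d\nu$, which may be nonzero. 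We therefore have to check that the term $*^T\varphi_T\wedge d\nu$ contributes only components without a $\nu$-factor, i.e.\ it lies in the "$T$" part. Applying $*^{-1}$ and using Lemma~\ref{relation between stars}(iv), such a part maps to a form containing $\nu$, which is killed by $i^*$. Computing this carefully, together with the identification of $d\nu$'s components with the frame (the second fundamental form appears in $(d\nu)_N$), is where I expect the obstacle. If $(d\nu)_N$ contributes, I would instead verify the identity at the level of pullbacks by the definition $\delta^T=(-1)^k*^{-1}d^T*$ applied only to the $\nu\wedge(\cdot)$-component.

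With the first term, Lemma~\ref{relation between stars}(ii) inverted gives $*^{-1}(\tau_T\wedge\nu)=(*^T)^{-1}\tau_T$ on tangential forms. Hence $i^*\delta^T\varphi_T=(-1)^k(*^\Sigma)^{-1}d^\Sigma*^\Sigma\varphi_{\scparal}$ by the first two identities. This is $\delta^\Sigma\varphi_{\scparal}$, after matching the sign $(-1)^k$ with the dimension-$6$ convention for~$\delta^\Sigma$.
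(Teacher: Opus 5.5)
Your first two identities are fine and follow the paper's argument: pull back using $i^*\nu=0$, and for $*^T$ pull back the defining relation and invoke uniqueness of $*^\Sigma$. The gap is in the third identity, exactly at the term you flagged and left open. Since $d^T\nu=d\nu$, the Leibniz rule gives
\[
d^T(*^T\varphi_T\wedge\nu)=d^T(*^T\varphi_T)\wedge\nu+(-1)^k\,*^T\varphi_T\wedge d\nu ,
\]
and the second summand does not go away.

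Along $\Sigma$ one has $d\nu(X_i,X_j)=-g(X_0,[X_i,X_j])=0$ for $i,j\ge1$, because the $X_i$ are tangent to $\Sigma$ there. On the other hand $d\nu(X_0,X_j)=g(\nabla_{X_0}X_0,X_j)$. So $d\nu|_\Sigma=\nu\wedge(\nabla_{X_0}X_0)^\flat$. The second fundamental form, being symmetric, does not enter $d\nu|_\Sigma$ at all, contrary to what you suggested. The extra term is therefore of the form (tangential)$\wedge\nu$, which is the opposite of what you hoped. Applying $*^{-1}$ makes it tangential, so it survives $i^*$, and one gets
\[
(\delta^T\varphi_T)|_\Sigma=\delta^\Sigma\varphi_{\scparal}\pm\iota_{\nabla_{X_0}X_0}\varphi_{\scparal}.
\]
The stated frame hypotheses (orthonormal, $X_0=\mathbf n$ on $\Sigma$) do not control $\nabla_{X_0}X_0$. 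So your fallback of verifying the identity at the level of pullbacks cannot work: the identity is false in that generality.

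Here is a concrete counterexample. In flat $\mathbb R^7$ with $\Sigma=\{x^0=0\}$, set $X_0=\cos\theta\,\partial_0+\sin\theta\,\partial_1$, $X_1=-\sin\theta\,\partial_0+\cos\theta\,\partial_1$ and $X_i=\partial_i$ for $i\ge2$, where $\theta=\theta(x^0)$ with $\theta(0)=0\ne\theta'(0)$. Take $\varphi=e^1$. Since $*\varphi_T$ is divisible by $\nu$, we have $d^N*\varphi_T=0$. Hence $\delta^T\varphi_T=\delta e^1=-\operatorname{div}X_1=\theta'\cos\theta$, which equals $\theta'(0)\ne0$ on $\Sigma$. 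But $\delta^\Sigma\varphi_{\scparal}=\delta^\Sigma dx^1=0$.

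The missing ingredient is a normalisation of the frame. Choose $X_0=\operatorname{grad} r$, where $r$ is the signed distance to $\Sigma$; equivalently, extend $\mathbf n$ along unit-speed normal geodesics. Then $\nu=dr$ is closed on $U$. (For the restricted identity alone, $\nabla_{X_0}X_0=0$ along $\Sigma$ suffices.)

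With this choice $d^T\nu=0$, and the Leibniz rule yields $d^T(*^T\varphi_T\wedge\nu)=(d^T*^T\varphi_T)\wedge\nu$. Lemma~\ref{relation between stars} then gives $\delta^T\varphi_T=-*^Td^T*^T\varphi_T$, and your last paragraph finishes the proof via the first two identities.

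The paper's proof takes exactly this step without comment: it writes $(-1)^k*d^T(*^T\varphi_T\wedge\nu)=-*(\nu\wedge d^T*^T\varphi_T)$, which tacitly uses $d^T\nu=0$. So your suspicion was justified. A complete proof must either impose this normalisation explicitly or carry the extra term.
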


\begin{remark}
Applying the result to $\varphi=\varphi_N$, we obtain the same identities with $\varphi_T$ and $\varphi_{\scparal}$ replaced by $\varphi_N$ and~$\varphi_{\scperp}$.
\end{remark}

\begin{proof}
Observe that
\begin{equation*}
d^\Sigma\varphi_{\scparal} = d^\Sigma(i^*\varphi) = i^*(d\varphi).
\end{equation*}
Since $\nu\wedge d^N\varphi=0$, the expression $i^*(d\varphi)$ equals~$(d^T\varphi_T)|_\Sigma$, which implies $(d^T\varphi_T)|_\Sigma = d^\Sigma\varphi_{\scparal}$. Next, take an arbitrary $\sigma\in\Omega^k(U\cap\Sigma)$. Extending this form to a $k$-form $\hat{\sigma}$ on $U$ in such a way that $\iota_{X_0}(\hat{\sigma})=0$, we compute
    \begin{equation*}
        \sigma\wedge(*^T\varphi_T)|_\Sigma = (\hat{\sigma}\wedge*^T\varphi_T)|_\Sigma = (\langle\hat{\sigma},\varphi_T\rangle(\operatorname{vol}_M)_N)|_\Sigma = \langle\sigma,\varphi_{\scparal}\rangle\operatorname{vol}_\Sigma=\sigma\wedge*^\Sigma\varphi_{\scparal}.
    \end{equation*}
Thus, $(*^T\varphi_T)|_\Sigma = *^\Sigma\varphi_{\scparal}$. Finally, we engage Lemma~\ref{relation between stars} to find 
\begin{align*}
\delta^T\varphi_T = (-1)^k*d^T*\varphi_T &= (-1)^k*d^T(*^T\varphi_T\wedge \nu)
\\
&= -*(\nu\wedge d^T*^T\varphi_T) = -*^Td^T*^T\varphi_T.
\end{align*}
The equality $(\delta^T\varphi_T)|_\Sigma = \delta^\Sigma\varphi_{\scparal}$ now follows from the formulas for $(d^T\varphi_T)|_\Sigma$ and $(*^T\varphi_T)|_\Sigma$.
\end{proof}

\subsection{Second-order operators}

In order to prove Theorem~A, we need to bring the Laplacian $\Delta^\Sigma$ into the picture. Clearly, this Laplacian is the restriction of the operator
\begin{align*}
\Delta^T=d^T\delta^T+\delta^Td^T.
\end{align*}

\begin{lemma}\label{lemma operators in adapted frame}
For every $\varphi\in\Omega^k(U)$,
\begin{equation}\label{(co)differential in adapted frame}
 \begin{split}
     d\varphi = (d^N\varphi_T - \nu\wedge d^T\varphi_N) + d^T\varphi_T,\qquad 
     \delta\varphi = -\nu\wedge\delta^T\varphi_N + (\delta^N(\nu\wedge\varphi_N)+\delta^T\varphi_T).
 \end{split}
 \end{equation}
The $G_2$-structure $\phi$ satisfies
\begin{equation}\label{Laplace in adapted frame}
\begin{split}
(\Delta_\phi\phi)_N &= 
\iota_{X_0}(d^N(\delta\phi)_T) + \Delta ^T\phi_N - \delta^T((d\phi)_N+d^T\phi_N),\\
(\Delta_\phi\phi)_T 
&= \delta^N(\nu\wedge (d\phi)_N) + \Delta^T\phi_T   + d^T((\delta\phi)_T-\delta^T\phi_T), 
\end{split}
\end{equation}
where $\Delta^T=d^T\delta^T+\delta^Td^T$.
\end{lemma}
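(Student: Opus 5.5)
We start from the splitting $\varphi=\nu\wedge\varphi_N+\varphi_T$ and the decompositions $d=d^N+d^T$ and $\delta=\delta^N+\delta^T$, and simply expand.

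\textbf{Exterior derivative.} By the Leibniz rule,
\[
d(\nu\wedge\varphi_N)=d\nu\wedge\varphi_N-\nu\wedge d\varphi_N .
\]
We read off $d^N$ and $d^T$ on this product. The term $\nu\wedge d^N\varphi_N$ vanishes because $d^N\varphi_N$ already contains a factor $\nu$. The term involving $d\nu$ must be shown to be absorbed into $d^N$ and $d^T$ consistently. Here we use the defining property that $d^N$ picks out precisely the $\nu$-component of $d$ applied to the tangential part: on $\nu\wedge\varphi_N$ we verify that $d^T(\nu\wedge\varphi_N)=-\nu\wedge d^T\varphi_N$ and $d^N(\nu\wedge\varphi_N)=0$. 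Indeed, since $(\nu\wedge\varphi_N)_T=0$, the definition $d^N\psi=\nu\wedge(d\psi_T)_N$ gives $d^N(\nu\wedge\varphi_N)=0$ immediately. Hence
\[
d^T(\nu\wedge\varphi_N)=d(\nu\wedge\varphi_N)=-\nu\wedge d\varphi_N+d\nu\wedge\varphi_N ,
\]
and one checks this equals $-\nu\wedge d^T\varphi_N$ plus a tangential term. This is the first subtle point: $d\nu\wedge\varphi_N$ has a tangential part that must be tracked. I would handle it by writing $d\nu=\nu\wedge(d\nu)_N+(d\nu)_T$ and checking that, with the definitions as given, the grouping in \eqref{(co)differential in adapted frame} is $d\varphi=(d^N\varphi_T-\nu\wedge d^T\varphi_N)+d^T\varphi_T$. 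Here the first bracket is the $\nu$-part and the second the tangential part, possibly after noting that $d^T\varphi_T$ is allowed to be non-tangential off $\Sigma$, with all terms arranged by the definitions.

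\textbf{Codifferential.} The formula for $\delta$ follows by applying the $d$-formula to $*\varphi=(-1)^k\nu\wedge *^T\varphi_T+*^T\varphi_N$ (Lemma~\ref{relation between stars}(iv)) and conjugating back with $*^{-1}$, using (ii)--(iii) to convert $*^T$ and $\nu\wedge$. This yields $\delta^T(\nu\wedge\varphi_N)=-\nu\wedge\delta^T\varphi_N$, exactly as in the proof of Lemma~\ref{Restriction of adapted frames}, while $\delta^N$ acts only on $\nu\wedge\varphi_N$, giving the stated expression.

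\textbf{Laplacian.} For $\Delta_\phi\phi=d\delta\phi+\delta d\phi$, apply the formulas in \eqref{(co)differential in adapted frame} with $\varphi=\delta\phi$ and $\varphi=d\phi$, take normal and tangential parts, and substitute the first-order identities once more for $(\delta\phi)_N,(\delta\phi)_T,(d\phi)_N,(d\phi)_T$. The normal part of $d\delta\phi$ is $\iota_{X_0}d^N(\delta\phi)_T-d^T(\delta\phi)_N$, where $(\delta\phi)_N=-\delta^T\phi_N$. The normal part of $\delta d\phi$ is $-\delta^T(d\phi)_N$, with $(d\phi)_N=\iota_{X_0}d^N\phi_T-d^T\phi_N$. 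Adding $\pm\delta^Td^T\phi_N$ regroups these into $\Delta^T\phi_N-\delta^T((d\phi)_N+d^T\phi_N)$; the tangential formula is obtained symmetrically by adding and subtracting $d^T\delta^T\phi_T$.

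The main obstacle is the bookkeeping of the curvature terms coming from $d\nu\neq0$ and from the fact that $d^T$ need not preserve tangentiality off $\Sigma$. I would first verify carefully the two commutation rules $d^T(\nu\wedge\cdot)=-\nu\wedge d^T$ and $\delta^T(\nu\wedge\cdot)=-\nu\wedge\delta^T$ on $N$-type forms (the latter via Lemma~\ref{relation between stars}), and then regard the remainder as purely formal algebra.
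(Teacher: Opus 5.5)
Your overall route matches the paper's. You expand $d=d^N+d^T$ and $\delta=\delta^N+\delta^T$ on $\nu\wedge\varphi_N+\varphi_T$, then read off the $N$- and $T$-parts of $d\delta\phi$ and $\delta d\phi$, adding and subtracting $\delta^Td^T\phi_N$ and $d^T\delta^T\phi_T$. That final assembly is fine.

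The gap is exactly at the point you flag as subtle, and your proposed way past it does not work. With the given definitions there is no room to ``absorb'' $d\nu\wedge\varphi_N$. First, $d^N(\nu\wedge\varphi_N)=0$. Second, $d^T\varphi_T=d\varphi_T-\nu\wedge(d\varphi_T)_N=(d\varphi_T)_T$ is tangential on all of $U$, not only on $\Sigma$. Hence one gets exactly
\[
d\varphi=(d^N\varphi_T-\nu\wedge d^T\varphi_N)+d^T\varphi_T+d\nu\wedge\varphi_N,
\]
so the first identity of the lemma holds if and only if $d\nu\wedge\varphi_N=0$. Taking $\varphi=\nu$ (so $\varphi_N=1$, $\varphi_T=0$) shows that the lemma forces $d\nu=0$. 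For an arbitrary unit extension $X_0$ of $\mathbf n$ this fails: $\iota_{X_0}d\nu=(\nabla_{X_0}X_0)^\flat$, with $\nabla$ the Levi-Civita connection of $g$, and this need not vanish even on $\Sigma$.

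For the same reason, the two commutation rules you plan to ``verify carefully'' are false in general. The Leibniz rule for $d^T$, together with $d^T\nu=d\nu$, gives $d^T(\nu\wedge\xi)=d\nu\wedge\xi-\nu\wedge d^T\xi$. The error then propagates to $\delta^T(\nu\wedge\varphi_N)$ through Lemma~\ref{relation between stars}.

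The missing ingredient is a normalisation of the frame, not more bookkeeping. Choose $X_0=\operatorname{grad} r$, where $r$ is the signed $g$-distance to $\Sigma$; it is smooth with $|\operatorname{grad} r|=1$ near $\Sigma$, so $\nu=dr$ is closed. Then:
\begin{itemize}
\item $d^T(\nu\wedge\xi)=-\nu\wedge d^T\xi$ follows from the Leibniz rule.
\item $\delta^T(\nu\wedge\varphi_N)=-\nu\wedge\delta^T\varphi_N$ follows from Lemma~\ref{relation between stars} via the paper's chain of equalities, which uses this rule once more.
\item $\delta^N\varphi_T=0$, because $*\varphi_T=*^T\varphi_T\wedge\nu$ has vanishing $T$-part.
\end{itemize}
With these in hand, your remaining steps go through verbatim.

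The paper's own ``straightforward calculation'' tacitly relies on the same property $d\nu=0$ (compare $\nu=dr$ in Section~\ref{sec_Poisson}). You were right that something is needed at that step, but the fix is to impose $d\nu=0$, not to fold the extra term into the existing groupings.
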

\begin{proof}
Straightforward calculations yield
\begin{align*}
d\varphi&=(d^T+d^N)(\nu\wedge\varphi_N+\varphi_T) = (d^N\varphi_T-\nu\wedge d^T\varphi_N)+d^T\varphi_T,\\
\delta\varphi&=(\delta^T+\delta^N)(\nu\wedge\varphi_N+\varphi_T)=\delta^N(\nu\wedge\varphi_N) + \delta^T(\nu\wedge\varphi_N) + \delta^T\varphi_T.
\end{align*}
Using Lemma~\ref{relation between stars}, we obtain
\begin{align*}
    \delta^T(\nu\wedge\varphi_N) &= (-1)^k*d^T*(\nu\wedge\varphi_N) = (-1)^k * d^T*^T\varphi_N = (-1)^k (*^Td^T*^T\varphi_N)\wedge \nu\\
    &
    =\nu\wedge*\,(\nu\wedge(d^T*^T\varphi_N))= -\nu\wedge*\,d^T(\nu\wedge*^T\varphi_N) =(-1)^{k}\nu\wedge*\,d^T*\varphi_N = -\nu \wedge \delta^T\varphi_N,
\end{align*}
which proves ~\eqref{(co)differential in adapted frame}.
Similar calculations show that
    \begin{align*}
        d\delta\phi &= (d^T+d^N)(-\nu\wedge\delta^T\phi_N + (\delta\phi)_T) = \nu\wedge d^T\delta^T\phi_N + d^T(\delta\phi)_T+ d^N(\delta\phi)_T,
        \\
        \delta d\phi &= (\delta^T+\delta^N)(\nu\wedge (d\phi)_N + d^T\phi_T) = \delta^N(\nu\wedge(d\phi)_N) + \delta^T(\nu\wedge(d\phi)_N) + \delta^Td^T\phi_T\\
        &= \delta^N(\nu\wedge(d\phi)_N) - (\nu\wedge\delta^T(d\phi)_N) + \delta^Td^T\phi_T.
    \end{align*}
    After extracting the $N$ and $T$ components, we are left with
    \begin{align*}
        (\Delta_\phi\phi)_N &= d^T\delta^T\phi_N + \iota_{X_0}(d^N(\delta\phi)_T) - \delta^T(d\phi)_N,\\
        (\Delta_\phi\phi)_T &= d^T(\delta\phi)_T + \delta^N(\nu\wedge (d\phi)_N) + \delta^Td^T\phi_T.
    \end{align*}
Adding $\delta^Td^T\phi_N$ to the first equation and $d^T\delta^T\phi_T$ to the second produces~\eqref{Laplace in adapted frame}.
\end{proof}

\begin{remark}\label{rem_2nd_order_closed}
By formula~\eqref{(co)differential in adapted frame}, if $d\phi=0$, then $d^\Sigma\phi_{\scparal}=0$. In this case, the second line in~\eqref{Laplace in adapted frame} implies
\begin{equation*}
(\Delta_\phi\phi)_{\scparal} = d^\Sigma(\delta\phi)_{\scparal}.
\end{equation*}
\end{remark}

The following proposition establishes formula~\eqref{first order condition}.

\begin{proposition}\label{prop_1st_order_cond}
The $G_2$-structure $\phi$ satisfies
\begin{equation*}
(d\phi)_{\scperp}\wedge(*\phi)_{\scperp} = (\delta\phi)_{\scparal}\wedge (*\phi)_{\scparal}
\end{equation*}
on the hypersurface $\Sigma$.
\end{proposition}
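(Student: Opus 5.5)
Following Bryant's argument, the plan is to start from the pointwise $G_2$ identity $d\phi\wedge\phi$ and $\delta\phi$ relation. Bryant's key observation is that for any $G_2$-structure, $d\phi\wedge\phi = 7\tau_0\,\vol$ and $d*\phi\wedge\phi$ vanishes identically (no $\Lambda^7_1$-type term in $d*\phi$). More precisely, the torsion forms satisfy $d\phi=\tau_0*\phi+3\tau_1\wedge\phi+*\tau_3$ and $d*\phi=4\tau_1\wedge*\phi+\tau_2\wedge\phi$, with $\tau_2\in\Omega^2_{14}$, so $\tau_2\wedge\phi=-*\tau_2$ and $\tau_2\wedge\phi\wedge\phi=0$, while $\tau_1\wedge*\phi\wedge\phi=0$ by degree. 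Hence
\begin{equation*}
d*\phi\wedge\phi=0 .
\end{equation*}
Since $\delta\phi=(-1)^3*^{-1}d*\phi$ is, up to sign, $*\,d*\phi$, this is equivalent to $\langle\delta\phi,*\phi\rangle$ being a multiple of $\langle d*\phi, \phi\wedge\cdot\rangle$. I will instead phrase everything via the pairing: $d*\phi\wedge\phi=0$ means $\langle *d*\phi,\phi\rangle=0$ up to sign, i.e. $\langle\delta\phi,\phi\rangle$... wait, degrees: $\delta\phi$ is a $2$-form and $*\phi$ a $4$-form. So the intended identity should be read as a statement between $6$-forms on $\Sigma$: $(d\phi)_{\scperp}$ is a $3$-form, $(*\phi)_{\scperp}$ a $3$-form, $(\delta\phi)_{\scparal}$ a $2$-form and $(*\phi)_{\scparal}$ a $4$-form. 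So the goal is to show both sides agree as top forms on $\Sigma$, which I will derive from two $7$-form identities on $M$ restricted to $\Sigma$.

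The first step is to use $\phi\wedge*\phi=7\vol_M$ and $d\phi\wedge *\phi$ together with $\delta\phi$: I would rather use the single identity $d(\phi\wedge*\phi)$-type reasoning being useless, so use $d*\phi\wedge\phi=0$ and the equivalent form $\langle d\phi,*\phi\rangle$-free version: write $*\delta\phi=\pm d*\phi$ (a $5$-form) and pair with $*\phi$ differently. Concretely, the identity to exploit is
\begin{equation*}
d\phi\wedge*\phi \;\text{ vs. }\; \delta\phi\wedge *\phi\ (\text{a $6$-form}),
\end{equation*}
so I will look for a $6$-form identity on $M$ whose $\nu$-free part gives the claim: namely $\iota_{X_0}\big(d\phi\wedge *\phi\big)$-type terms. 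Using $d\phi=\nu\wedge(d\phi)_N+(d\phi)_T$, $*\phi=\nu\wedge(*\phi)_N+(*\phi)_T$ and Lemma~\ref{relation between stars}, the $7$-form $d\phi\wedge*\phi$... has degree $8$, so instead the relevant identity is Bryant's pointwise fact that $\iota_X d\phi\wedge*\phi$ and $\iota_X(\delta\phi)$ are related: for $X=\mathbf n$, $\iota_{\mathbf n}$ of $(d*\phi\wedge\phi)=0$ gives $\iota_{\mathbf n}(d*\phi)\wedge\phi - d*\phi\wedge\iota_{\mathbf n}\phi=0$ hmm; more usefully, the $G_2$ identity $\iota_X\phi\wedge*\phi$ and the torsion decomposition yield $\iota_X d\phi\wedge *\phi$ expressed through $\tau_0,\tau_1$ and $\delta\phi\wedge\iota_X(*\phi)$ likewise, with matching coefficients.

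So the key steps in order: (1) express $d\phi$ and $\delta\phi$ via torsion forms $\tau_0,\tau_1,\tau_2,\tau_3$; (2) compute the $6$-forms $\iota_{\mathbf n}d\phi\wedge\iota_{\mathbf n}*\phi$ restricted to $\Sigma$ and $i^*(\delta\phi\wedge *\phi)$, each as (contributions of $\tau_0$, $\tau_1$, $\tau_2$, $\tau_3$) times $\vol_\Sigma$, using $G_2$-representation theory (the $\Lambda^3_{27}$ and $\Lambda^2_{14}$ parts contribute by Schur-type vanishing after contraction with $\phi$, $*\phi$); (3) compare. The main obstacle is step (2): checking that the $\tau_2$ and $\tau_3$ contributions cancel and that the $\tau_0,\tau_1$ coefficients match exactly. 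I would handle it in an adapted frame at a point where $\phi$ is standard and $\mathbf n=e_0$, verifying the linear-algebraic identities on each irreducible summand by testing a representative element.
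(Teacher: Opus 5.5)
Your closing three-step plan works, but it takes a different and much heavier route than the paper. Discard the opening material: $d*\phi\wedge\phi$ and $d\phi\wedge*\phi$ are $8$-forms on a $7$-manifold, so they vanish for degree reasons and carry no information.

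The paper starts from Bryant's $6$-form identity $(*d\phi)\wedge\phi=\delta\phi\wedge*\phi$ on $M$ and wedges it with $\nu$. It then uses Lemma~\ref{relation between stars} to show that $i^*\big((*\Theta)\wedge\phi\big)=\Theta_{\scperp}\wedge(*\phi)_{\scperp}$ for every $4$-form $\Theta$. This step is pure Hodge-star bookkeeping with no $G_2$ input. Hence the left side of the proposition is $i^*(*d\phi\wedge\phi)$, the right side is $i^*(\delta\phi\wedge*\phi)$, and the claim is just the pullback of Bryant's identity. That observation is the idea your proposal lacks. With it, the only torsion computation needed is $*d\phi\wedge\phi=-12*\tau_1=\delta\phi\wedge*\phi$ (in Bryant's normalisation), a two-line consequence of the torsion formulas you already wrote down.

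Your pointwise adapted-frame route does buy an explicit common value for both sides, namely $-12\,i^*(*\tau_1)=-12\langle\tau_1,\nu\rangle\, i^*(\iota_{\mathbf n}\vol_M)$. So the identity is controlled by the normal component of $\tau_1$. The cost is $SU(3)$-branching bookkeeping, and here your justification needs correcting.

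Once $\mathbf n$ is fixed, the maps you contract with are only $SU(3)$-equivariant, not $G_2$-equivariant. The space $\Lambda^3_{27}$ contains an $SU(3)$-invariant line, spanned by $7\,\nu\wedge\iota_{\mathbf n}\phi-3\phi$. So the $\tau_3$-contribution $i^*(\iota_{\mathbf n}*\tau_3\wedge\iota_{\mathbf n}*\phi)$ is not killed by Schur's lemma alone and must be tested on that element. It does vanish: for that element $\iota_{\mathbf n}*\tau_3=-3\,\iota_{\mathbf n}*\phi$, and a $3$-form wedged with itself is zero.

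Likewise, nothing needs to ``cancel''. The $\tau_0$-term on the left vanishes because $\iota_{\mathbf n}*\phi$ has odd degree. The $\tau_2$-term on the right vanishes already on $M$, since $\tau_2\wedge*\phi=0$. Only the normal component of $\tau_1$ survives, giving $-12\langle\tau_1,\nu\rangle$ on both sides.
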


\begin{proof}
As shown in~\cite[page~553]{BR87},
\begin{equation*}
(*d\phi)\wedge\phi = (\delta\phi)\wedge*\phi.
\end{equation*}
Taking the wedge product with~$\nu$, we obtain
\begin{equation}\label{normal wedge Bryant}
\nu\wedge (*d\phi)_T\wedge\phi_T = \nu\wedge(\delta\phi)_T\wedge(*\phi)_T.
\end{equation}
Formula~$(iv)$ of Lemma~\ref{relation between stars} implies
\begin{align*}
\nu\wedge (*d\phi)_T\wedge\phi_T &= \nu\wedge*^T(d\phi)_N\wedge\phi_T = -*(d\phi)_N\wedge\phi_T \\ 
&= -(d\phi)_N\wedge*\phi_T = -(d\phi)_N\wedge*^T\phi_T \wedge \nu = \nu\wedge(d\phi)_N\wedge(*\phi)_N.
\end{align*}
Substituting this into~\eqref{normal wedge Bryant} and taking the interior product with $\nu$ completes the proof.
\end{proof}

The following proposition combines the two formulas in~\eqref{Laplace in adapted frame}. This give us the pointwise variant of Theorem~A.

\begin{proposition}\label{prop_pwise_Gauss}
The $G_2$-structure $\phi$ satisfies
\begin{equation*}
\begin{split}
(\Delta_\phi\phi)_{\scperp}\wedge(*\phi)_{\scparal} &+ (\Delta_\phi\phi)_{\scparal} \wedge(*\phi)_{\scperp}
\\
&= \Delta^\Sigma\phi_{\scperp}\wedge(*\phi)_{\scparal} +\Delta^\Sigma\phi_{\scparal}\wedge(*\phi)_{\scperp}
\\
&\hphantom{=}~- ((d\phi)_{\scperp}+d^\Sigma\phi_{\scperp}) \wedge *^\Sigma(d\phi)_{\scperp} - \delta^\Sigma((d\phi)_{\scperp}+d^\Sigma\phi_{\scperp})\wedge(*\phi)_{\scparal}
\\
&\hphantom{=}~
+ ((\delta\phi)_{\scparal}-\delta^\Sigma\phi_{\scparal})\wedge*^\Sigma(\delta\phi)_{\scparal} + d^\Sigma((\delta\phi)_{\scparal}-\delta^\Sigma\phi_{\scparal})\wedge(*\phi)_{\scperp}
\end{split}
\end{equation*}
on the hypersurface $\Sigma$.
\end{proposition}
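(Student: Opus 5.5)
We start from the two component formulas~\eqref{Laplace in adapted frame} of Lemma~\ref{lemma operators in adapted frame}. We wedge the first with $(*\phi)_T$ and the second with $(*\phi)_N$, add the results, and restrict to~$\Sigma$. Lemma~\ref{Restriction of adapted frames} and the remark following it replace $\Delta^T$, $d^T$ and $\delta^T$ acting on $\phi_N$, $\phi_T$, $(d\phi)_N$ and $(\delta\phi)_T$ by their $\Sigma$-counterparts. This directly produces four of the terms on the right-hand side:
\begin{equation*}
\Delta^\Sigma\phi_{\scperp}\wedge(*\phi)_{\scparal}+\Delta^\Sigma\phi_{\scparal}\wedge(*\phi)_{\scperp}
\end{equation*}
and
\begin{equation*}
-\delta^\Sigma((d\phi)_{\scperp}+d^\Sigma\phi_{\scperp})\wedge(*\phi)_{\scparal}+d^\Sigma((\delta\phi)_{\scparal}-\delta^\Sigma\phi_{\scparal})\wedge(*\phi)_{\scperp}.
\end{equation*}
What remains is to identify the two genuinely normal terms, namely $\iota_{X_0}(d^N(\delta\phi)_T)\wedge(*\phi)_T$ and $\delta^N(\nu\wedge(d\phi)_N)\wedge(*\phi)_N$. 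The claim is that on $\Sigma$ these equal
\begin{equation*}
((\delta\phi)_{\scparal}-\delta^\Sigma\phi_{\scparal})\wedge*^\Sigma(\delta\phi)_{\scparal}
\quad\text{and}\quad
-((d\phi)_{\scperp}+d^\Sigma\phi_{\scperp})\wedge*^\Sigma(d\phi)_{\scperp},
\end{equation*}
respectively.

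The idea is that $d^N$ obeys the Leibniz rule and kills products whose total normal derivative is prescribed by lower-order identities. For the first term, we apply $d^N$ to the product $(\delta\phi)_T\wedge(*\phi)_T$ and use Leibniz:
\begin{equation*}
d^N(\delta\phi)_T\wedge(*\phi)_T = d^N\big((\delta\phi)_T\wedge(*\phi)_T\big)\mp(\delta\phi)_T\wedge d^N(*\phi)_T.
\end{equation*}
By formula~\eqref{(co)differential in adapted frame} applied to $*\phi$, together with Lemma~\ref{relation between stars}(iv), we have $d^N(*\phi)_T=(d*\phi)_{\text{normal part}}+\nu\wedge d^T(*\phi)_N$. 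Here $d*\phi=\pm*\delta\phi$, so its normal part is $\pm\nu\wedge*^T(\delta\phi)_T$. Also $(*\phi)_N=\pm*^T\phi_T$, so $d^T(*\phi)_N=\pm*^T\delta^T\phi_T$ up to sign, by the computation in the proof of Lemma~\ref{Restriction of adapted frames}. This produces $(\delta\phi)_T\wedge*^T((\delta\phi)_T-\delta^T\phi_T)$. Lemma~\ref{relation between stars}(i) lets us symmetrize this into the stated form.

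The total-derivative term $d^N\big((\delta\phi)_T\wedge(*\phi)_T\big)$ is what Proposition~\ref{prop_1st_order_cond} controls. I would first try the route through Bryant's identity: the $T$-part of $(\delta\phi)\wedge*\phi-(*d\phi)\wedge\phi$ vanishes identically in $U$, not just on~$\Sigma$, so its $d^N$ vanishes. This trades $d^N\big((\delta\phi)_T\wedge(*\phi)_T\big)$ for $d^N\big((d\phi)_N\wedge(*\phi)_N\big)$, or rather the analogous $T$-part expression. That is exactly what is needed to pair the first normal term with the second.

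The second normal term is handled dually. We use $\delta^N=\pm*^{-1}d^N*$ and Lemma~\ref{relation between stars} to rewrite $\delta^N(\nu\wedge(d\phi)_N)\wedge(*\phi)_N$ as $\pm\,\iota_{X_0}d^N(*^T(d\phi)_N)\wedge\phi_T$. Leibniz again splits it into a total derivative plus $*^T(d\phi)_N\wedge d^N\phi_T$. Formula~\eqref{(co)differential in adapted frame} gives $d^N\phi_T=\nu\wedge((d\phi)_N+d^T\phi_N)$, which yields the term $-((d\phi)_{\scperp}+d^\Sigma\phi_{\scperp})\wedge*^\Sigma(d\phi)_{\scperp}$. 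The two total-derivative pieces then cancel against each other by the derivative of Bryant's identity described above.

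The main obstacle is this cancellation together with the sign bookkeeping. I expect the total-derivative contributions to cancel only after $d^N$ is applied to the full $7$-form identity $(*d\phi)\wedge\phi=(\delta\phi)\wedge*\phi$, and after each wedge is carefully commuted past $\nu$. The signs $(-1)^k$ in $\delta^N$ and Lemma~\ref{relation between stars} must be tracked with $k=3$ or $4$ throughout, and I would check them against degrees at every step.
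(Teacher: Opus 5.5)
Your argument is essentially the paper's proof. The paper applies $d^N$ with the Leibniz rule to the form of Bryant's identity valid on all of $U$, namely $(\delta\phi)_T\wedge(*\phi)_T=*^T(d\phi)_N\wedge\phi_T$, and then rewrites $d^N(*\phi)_T$, $d^N(*^T(d\phi)_N)\wedge\phi_T$ and $d^N\phi_T$ as you propose; this is the same as your cancellation of the two total-derivative pieces, and you rightly note that the identity must hold off $\Sigma$ before it can be differentiated normally.

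One correction: your intermediate claim that the two normal terms equal the displayed first-order expressions \emph{respectively} is false. Each term also contains $\iota_{X_0}d^N$ of the corresponding product, which involves second normal derivatives of $\phi$. Your subsequent argument, however, only uses and correctly proves the identity for their sum, which is all the proposition requires.
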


\begin{proof}
Formulas~\eqref{Laplace in adapted frame} imply
\begin{align}\label{NTsumwedge}
\begin{split}
(\Delta_\phi\phi)_N &\wedge(*\phi)_T + (\Delta_\phi\phi)_T \wedge(*\phi)_N 
\\
&= 
\iota_{X_0}(d^N(\delta\phi)_T)\wedge(*\phi)_T + \Delta ^T\phi_N \wedge(*\phi)_T - \delta^T((d\phi)_N+d^T\phi_N) \wedge(*\phi)_T
\\
&\hphantom{=}~+\delta^N(\nu\wedge (d\phi)_N) \wedge(*\phi)_N + \Delta^T\phi_T \wedge(*\phi)_N + d^T((\delta\phi)_T-\delta^T\phi_T) \wedge(*\phi)_N.
\end{split}
\end{align}
By Proposition~\ref{prop_1st_order_cond} and Lemma~\ref{relation between stars},
\begin{equation*}
(\delta\phi)_T\wedge (*\phi)_T = -(d\phi)_N\wedge *^T \phi_T = *^T(d\phi)_N\wedge\phi_T.
\end{equation*}
Applying $d^N$, we find
\begin{equation}\label{d^N applied - prop pwise Gauss}
d^N(\delta\phi)_T\wedge(*\phi)_T + (\delta\phi)_T\wedge d^N(*\phi)_T = d^N(*^T (d\phi)_N)\wedge \phi_T - *^T(d\phi)_N\wedge d^N\phi_T.
\end{equation}
In light of Lemma~\ref{relation between stars} and~\eqref{(co)differential in adapted frame}, the second term on the left transforms as follows:
\begin{align*}
(\delta\phi)_T\wedge d^N(*\phi)_T
&= (\delta\phi)_T\wedge d^N(*^T\phi_N) = (\delta\phi)_T\wedge d^N*(\nu\wedge\phi_N) 
= -(\delta\phi)_T\wedge *\,\delta^N(\nu\wedge\phi_N)
\\
&=-(\delta\phi)_T\wedge*\,((\delta\phi)_T-\delta^T\phi_T) = -(\delta\phi)_T\wedge *^T((\delta\phi)_T-\delta^T\phi_T)\wedge \nu.
\end{align*}
Similarly,
\begin{align*}
d^N(*^T (d\phi)_N)\wedge \phi_T
&= d^N(*(\nu\wedge(d\phi)_N))\wedge\phi_T = \delta^N(\nu\wedge (d\phi)_N)\wedge*\phi_T 
\\
&
= \delta^N(\nu\wedge (d\phi)_N)\wedge*^T\phi_T\wedge \nu = -\delta^N(\nu\wedge(d\phi)_N)\wedge(*\phi)_N\wedge \nu.
\end{align*}
Another invocation of~\eqref{(co)differential in adapted frame} gives us
\begin{align*}
*^T(d\phi)_N\wedge d^N\phi_T 
= -*^T(d\phi)_N\wedge ((d\phi)_N+d^T\phi_N)\wedge \nu.
\end{align*}
Substituting the obtained expressions into~\eqref{d^N applied - prop pwise Gauss}, taking interior product with~$X_0$, and reordering, we find
\begin{align*}
\begin{split}
\iota_{X_0}(d^N(\delta\phi)_T)\wedge(*\phi)_T &+\delta^N(\nu\wedge(d\phi)_N)\wedge(*\phi)_N 
\\
&= (\delta\phi)_T\wedge *^\Sigma((\delta\phi)_T-\delta^T\phi_T) + *^\Sigma(d\phi)_N\wedge ((d\phi)_N 
+d^T\phi_N).
\end{split}
\end{align*}
Combined with~\eqref{NTsumwedge}, this identity implies the result.
\end{proof}

We integrate the formula in Proposition~\ref{prop_pwise_Gauss} to recover the assertion of Theorem~A.

\begin{proof}[Proof of Theorem~A]
Since $\langle\nu,\nu\rangle=1$ and $\langle\varphi_{\scparal},\nu\wedge\sigma_{\scperp}\rangle=0$ for all $\varphi,\sigma\in\Omega^k(M)$,
\begin{equation}\label{combining the full Laplace from parts}
\begin{split}
\langle(\Delta_\phi\phi)_{\scperp},\phi_{\scperp}\rangle+ \langle(\Delta_\phi\phi)_{\scparal},\phi_{\scparal}\rangle 
&= \langle\nu\wedge(\Delta_\phi\phi)_{\scperp},\nu\wedge\phi_{\scperp}\rangle + \langle(\Delta_\phi\phi)_{\scparal},\phi_{\scparal}\rangle
\\
&
= \langle\nu\wedge(\Delta_\phi\phi)_{\scperp},\phi\rangle + \langle(\Delta_\phi\phi)_{\scparal},\phi\rangle =\langle\Delta_\phi\phi,\phi\rangle.
\end{split}
\end{equation}
Item~\emph{(iv)} of Lemma~\ref{relation between stars} enables us to re-state the formula in Proposition~\ref{prop_pwise_Gauss} as
\begin{align*}
(\Delta_\phi\phi)_{\scperp}\wedge*^\Sigma\phi_{\scperp} &- (\Delta_\phi\phi)_{\scparal} \wedge*^\Sigma\phi_{\scparal}
\\
&= \Delta^\Sigma\phi_{\scperp}\wedge*^\Sigma\phi_{\scperp} -\Delta^\Sigma\phi_{\scparal}\wedge*^\Sigma\phi_{\scparal}
\\
&\hphantom{=}~- ((d\phi)_{\scperp}+d^\Sigma\phi_{\scperp}) \wedge *^\Sigma(d\phi)_{\scperp} - \delta^\Sigma((d\phi)_{\scperp}+d^\Sigma\phi_{\scperp})\wedge*^\Sigma\phi_{\scperp}
\\
&\hphantom{=}~+ ((\delta\phi)_{\scparal}-\delta^\Sigma\phi_{\scparal})\wedge*^\Sigma(\delta\phi)_{\scparal} - d^\Sigma((\delta\phi)_{\scparal}-\delta^\Sigma\phi_{\scparal})\wedge*^\Sigma\phi_{\scparal}.
\end{align*}
Equivalently,
\begin{align*}
\langle(\Delta_\phi\phi)_{\scperp},\phi_{\scperp}\rangle&- \langle(\Delta_\phi\phi)_{\scparal},\phi_{\scparal}\rangle
\\
&=\langle\Delta^\Sigma\phi_{\scperp},\phi_{\scperp}\rangle -\langle\Delta^\Sigma\phi_{\scparal},\phi_{\scparal}\rangle
\\
&\hphantom{=}~- \langle (d\phi)_{\scperp}+d^\Sigma\phi_{\scperp},(d\phi)_{\scperp}\rangle - \langle\delta^\Sigma((d\phi)_{\scperp}+d^\Sigma\phi_{\scperp}),\phi_{\scperp}\rangle
\\
&\hphantom{=}~+ \langle(\delta\phi)_{\scparal}-\delta^\Sigma\phi_{\scparal},(\delta\phi)_{\scparal}\rangle - \langle d^\Sigma((\delta\phi)_{\scparal}-\delta^\Sigma\phi_{\scparal}),\phi_{\scparal}\rangle.
\end{align*}
We integrate this identity over $\Sigma$ to find
\begin{align*}
\int_\Sigma(\langle(\Delta_\phi\phi)_{\scperp},\phi_{\scperp}\rangle &- \langle(\Delta_\phi\phi)_{\scparal},\phi_{\scparal}\rangle)
\\
&= \int_\Sigma(\langle\Delta^\Sigma\phi_{\scperp},\phi_{\scperp}\rangle -\langle\Delta^\Sigma\phi_{\scparal},\phi_{\scparal}\rangle)
\\
&\hphantom{=}~- \int_\Sigma(\langle(d\phi)_{\scperp}+d^\Sigma\phi_{\scperp},(d\phi)_{\scperp}\rangle + \langle(d\phi)_{\scperp}+d^\Sigma\phi_{\scperp},d^\Sigma\phi_{\scperp}\rangle)
\\
&\hphantom{=}~+\int_\Sigma(\langle(\delta\phi)_{\scparal}-\delta^\Sigma\phi_{\scparal},(\delta\phi)_{\scparal}\rangle - \langle (\delta\phi)_{\scparal}-\delta^\Sigma\phi_{\scparal},\delta^\Sigma\phi_{\scparal}\rangle).
\end{align*}
It remains to add $2\int_\Sigma\langle(\Delta_\phi\phi)_{\scperp},\phi_{\scperp}\rangle$ to both sides and invoke~\eqref{combining the full Laplace from parts}.
\end{proof}

\begin{proof}[Proof of Corollary~\ref{cor_ThmA}]
The first formula follows from the definition of the Hodge Laplacian and the adjointness of $d^\Sigma$ and~$\delta^\Sigma$.
The second one is a consequence of the first and the observation that $d^\Sigma\phi_{\scparal}=0$ whenever $d\phi=0$ (see~\eqref{(co)differential in adapted frame}).
\end{proof}

\section{Poisson equation}\label{sec_Poisson}

The first assertion of Theorem~B is an immediate consequence of Proposition~\ref{prop_1st_order_cond} and Theorem~A. The purpose of this section is to prove the second assertion. From now on, let $M$ be a cohomogeneity one manifold with compact, connected and simple symmetry group $G$. Assume that the $G_2$-structure~$\psi$, and hence the induced metric~$g_0$, is $G$-invariant. Choose a $g_0$-geodesic $\gamma:I\to M$, parametrised by arc-length, that starts on the principal orbit~$\Sigma$ and runs $g_0$-orthogonally to the other principal orbits. As is standard, we use $\gamma$ to identify a tubular neighborhood of $\Sigma$ in $M$ with the product $I\times G/K$, where $I$ is an interval containing~0 and $K$ is a closed subgroup of~$G$. Upon this identification,
\begin{align*}
\Sigma=\{0\}\times G/K.
\end{align*}
If $r$ is the arc-length parameter of~$\gamma$, then $\nu=dr$ on~$\Sigma$.

Denote by $\mathfrak{g}$ and $\mathfrak{k}$ the Lie algebras of $G$ and~$K$. Let $\mathfrak m$ be the orthogonal complement of $\mathfrak k$ in $\mathfrak g$ with respect to the Killing form of~$\mathfrak g$. As is standard, we identify $\mathfrak m$ with the tangent space $T_K(G/K)$ at the coset~$K$. The isotropy representation of $G/K$ is isomorphic to the representation $\Ad^K:K\to GL(\mathfrak m)$ induced by the adjoint representation $\Ad:G\to GL(\mathfrak g)$. Since $M$ supports a $G$-invariant $G_2$-structure, $G/K$ must have a $G$-invariant $SU(3)$-structure; see~\cite{CS02}. Choosing a suitable basis of~$\mathfrak m$ and interpreting $SU(3)$ as a subgroup of $GL(6)$ under the natural embedding, we can identify $GL(\mathfrak m)$ with $GL(6)$ and the image $\Ad^K(K)$ with a subgroup of~$SU(3)$.
In what follows, we denote $\tilde K=\Ad^K(K)$ and view $\mathfrak m$ as a $\tilde K$-module via the fundamental representation of $SU(3)$ unless indicated otherwise. According to~\cite[Theorem~3.1]{CS02}, the principal orbit type $G/K$ must be, up to a finite quotient, one of the following:
\begin{itemize}
\item
The sphere $\mathbb S^6=G_2/SU(3)$
\item
The flag manifold~$SU(3)/\mathbb T^2$
\item
The projective space $\mathbb CP^3=Sp(2)/SU(2)U(1)$
\end{itemize}
The lack of uniform description of $G$-invariant differential forms on these spaces forces us to consider each case separately. Calculating the $G_2$-Laplacian, we reduce the Poisson equation~\eqref{Poisson_equation} to a ``non-elliptic" system of ODEs. A variant of DeTurck's trick helps us prove the existence of solutions, and an argument inspired by the analysis of the Ricci flow yields uniqueness; cf.~\cite{P13}.

We recall a series of useful facts from homogeneous geometry and representation theory in Section~\ref{subsec_rep_theory}. The second assertion of Theorem~B is proven for each orbit type in Sections~\ref{subsec_spherical}, \ref{subsec_flag_mf} and~\ref{subsec_proj_orb}.

\subsection{Invariant forms on homogeneous spaces with $SU(3)$-structures}\label{subsec_rep_theory}

The space of $G$-invariant differential forms for each orbit type is described in~\cite{CS02}. We repeat these descriptions, slightly expanded, throughout the present paper for the convenience of the reader. In doing so, we will rely on the discussion of differential forms on spaces with $SU(3)$-structures in~\cite{FSS94,ChS02}.

Denote by $S^2(\mathfrak m^*)$ and $\Lambda^k(\mathfrak m^*)$ the second symmetric tensor power and the $k$th exterior power of~$\mathfrak m^*$. The identification between $\mathfrak m$ and $T_K(G/K)$ yields an identification between the space of $G$-invariant Riemannian metrics on~$G/K$ and the space $S_+^2(\mathfrak m^*)^K$ of $\tilde K$-invariant scalar products on~$\mathfrak m$. Similarly, we can identify the space of $G$-invariant $k$-forms on $G/K$ with the space $\Lambda^k(\mathfrak{m}^*)^K$ of $\tilde K$-invariant $k$-forms on~$\mathfrak{m}$.

Given $r\in(-1,1)$, 
the $G_2$-structure $\psi$ 
induces an $SU(3)$-structure on the orbit~$\{r\}\times G/K$, and hence an almost complex structure~$J$ on~$\mathfrak m$. Suppose that $\Lambda^{p,q}$ is the space of complex-valued forms on $\mathfrak m$ of type~$(p,q)$ with respect to~$J$. Let $\Lambda_0^{p,q}$ be the primitive part of~$\Lambda^{p,q}$. If $V$ is a complex vector space with a real structure, $[[V]]$ and $[V]$ stand for the realification and the real part of~$V$, respectively. The fundamental representation of $SU(3)$ gives rise to representations of $\tilde K$ on $S^2(\mathfrak m^*)$, $\Lambda^k(\mathfrak m^*)$, $[[\Lambda^{p,q}]]$, $[[\Lambda_0^{p,q}]]$ and~$[\Lambda_0^{p,p}]$.
In what follows, we view $S^2(\mathfrak m^*)$, $\Lambda^k(\mathfrak m^*)$, $[[\Lambda^{p,q}]]$, $[[\Lambda_0^{p,q}]]$ and~$[\Lambda_0^{p,p}]$ as $\tilde K$-modules via these representations.
By duality,
\begin{align}\label{eq_Lambda1_m}
\Lambda^1(\mathfrak m^*)\cong\mathfrak m.
\end{align}
The discussion in~\cite{FSS94,ChS02} demonstrates that
\begin{align}\label{decomposition of exterior powers under su(3)}
\begin{split}
\Lambda^1(\mathfrak{m}^*) &\cong [[\Lambda^{1,0}]],
\\
\Lambda^2(\mathfrak{m}^*) &\cong [[\Lambda^{1,0}]] \oplus [\Lambda^{1,1}_0] \oplus \mathbb{R},
\\
\Lambda^3(\mathfrak{m}^*) &\cong \mathbb{R} \oplus \mathbb{R} \oplus [[\Lambda^{2,1}_0]]\oplus [[\Lambda^{1,0}]].
\end{split}
\end{align}
The 1-dimensional $\mathbb R$ summands are interpreted here as $\tilde K$-modules via the trivial representation, and $[\Lambda_0^{1,1}]$ is isomorphic to the Lie algebra $\mathfrak{su}(3)$ viewed as a $\tilde K$-module via the adjoint representation of~$SU(3)$. Arguing as in~\cite{FSS94}, one easily shows that
\begin{align}\label{decomp_tens}
S^2(\mathfrak{m}^*) &\cong \mathbb{R} \oplus [[\Lambda^{2,1}_0]] \oplus [\Lambda^{1,1}_0].
\end{align}
The space $\Lambda^k(\mathfrak m^*)^K$ coincides with the largest trivial submodule of $\Lambda^k(\mathfrak m^*)$. Equality~\eqref{decomp_tens} will help us compute the dimension of~$\Lambda^3(\mathfrak m^*)^K$ in Sections~\ref{subsubsection - invariant forms and G2 Laplacian} and~\ref{subsubsec_inv_orb_red}.

\subsection{Spherical orbits}\label{subsec_spherical}

Assume that $G=G_2$ and $K=SU(3)$ with the standard embedding given by the realisation of $G_2$ as a subgroup of~$SO(7)$; see~\cite[p.~539]{BR87}. In this case, the homogeneous space $G/K$ is isotropy irreducible.

\subsubsection{Invariant forms on the orbit}\label{subsubsection - invariant forms and G2 Laplacian}

The representation $\Ad^K$ is isomorphic to the fundamental representation of $SU(3)$ on~$\mathbb{C}^3$; see~\cite{CS02}. Fix a basis $\{e_i\}_{i=1}^{6}$ of $\mathfrak{m}$ that maps to the standard basis of $\mathbb C^3$ (viewed as $\mathbb R^6$) under this isomorphism, and denote the corresponding dual basis~$\{e^i\}_{i=1}^{6}$. Engaging Schur's lemma and using the inclusion~$SU(3)<SO(6)$, one can prove that every $G$-invariant metric on $G/K$ is a scalar multiple of the metric
\begin{align*}
g_1=\sum_{i=1}^{6} e^i\otimes e^i.
\end{align*}
For every $r\in I$, the space $\Omega^k(\{r\}\times G/K)^G$ of $G$-invariant $k$-forms on the orbit $\{r\}\times G/K$ is isomorphic to~$\Lambda^k(\mathfrak{m}^*)^K$. Formula~\eqref{eq_Lambda1_m} and the irreducibility of $\mathfrak m$ show that $\Lambda^1(\mathfrak{m}^*)^K$ contains nothing but zero. We conclude that
$$\Omega^1(\{r\}\times G/K)^G=\{0\}.$$
Next, the second line in~\eqref{decomposition of exterior powers under su(3)} implies
\begin{align*}
\begin{split}
\Lambda^2(\mathfrak{m}^*) &\cong\mathfrak m\oplus[\Lambda_0^{1,1}]\oplus\mathbb R.
\end{split}
\end{align*}
The $\tilde K$-module $[\Lambda_0^{1,1}]$ is irreducible because it is isomorphic to $\mathfrak{su}(3)$ as discussed in Section~\ref{subsec_rep_theory}. Consequently, $\Lambda^2(\mathfrak{m}^*)^K$ is 1-dimensional. One checks easily that the 2-form
\begin{equation*}
\omega = e^{12}+e^{34}+e^{56}
\end{equation*}
is $\tilde K$-invariant (here and in what follows, $e^{i_1\cdots i_l}=e^{i_1}\wedge \cdots \wedge e^{i_l}$). Thus, 
$$\Omega^2(\{r\}\times G/K)^G=\text{span}\{\omega\}.$$
Finally, the third line of~\eqref{decomposition of exterior powers under su(3)} yields
\begin{align*}
\Lambda^3(\mathfrak{m}^*) &\cong \mathbb{R} \oplus \mathbb{R} \oplus [[\Lambda^{2,1}_0]]\oplus \mathfrak m.
\end{align*}
Since $S^2(\mathfrak m^*)$ contains only one scalar product up to scaling, formula~\eqref{decomp_tens} shows that $[[\Lambda^{2,1}_0]]$ has no trivial submodules. This enables us to conclude that $\Lambda^3(\mathfrak{m}^*)^K$ is 2-dimensional. It is straightforward to verify the $\tilde K$-invariance of the 3-forms
\begin{equation}\label{invariant tensors in coordinates - irreducible - alphabeta}
\alpha = e^{246}-e^{235}-e^{145}-e^{136}, \qquad  \beta = e^{135}-e^{146}-e^{236}-e^{245}.
\end{equation}
Therefore,
$$\Omega^3(\{r\}\times G/K)^G=\text{span}\{\alpha,\beta\}.$$
Let $*_1$ and $\vol_1$ denote the Hodge star and the volume form $e^{123456}$ associated with~$g_1$. The following identities hold for $\omega$, $\alpha$ and~$\beta$:
\begin{align}\label{exterior derivative for irreducible case}
    d\omega=3\alpha, \qquad d\beta = -2\omega^2, \qquad \omega\wedge*_1\omega=\frac{1}{2}\omega^3=3\vol_1, \qquad \alpha\wedge*_1\alpha=\alpha\wedge\beta = 4\vol_1;
\end{align}
see, e.g.,~\cite[Section~2]{J22} and~\cite{CS02}.

\begin{remark}
The metric $g_1$ is a round metric on the 6-dimensional sphere, $\omega$ is a K\"ahler form as in~\cite{C58}, and $\alpha$ and $\beta$ are the real and the imaginary part of a complex volume form. 
\end{remark}

\subsubsection{Invariant forms on $M$ and the $G_2$-Laplacian}

The discussion in Section~\ref{subsubsection - invariant forms and G2 Laplacian}
implies that $dr$ and $\omega$ are the only $G$-invariant one-form and two-form on~$M$ up to multiplication by a scalar coefficient, possibly depending on~$r$. Every $G$-invariant $3$-form on $M$ appears as
\begin{equation}\label{3formisotropyirrep}
\phi=h\,dr\wedge\omega + f_\alpha\,\alpha + f_\beta\,\beta,
\end{equation}
where $h$, $f_\alpha$ and $f_\beta$ are smooth scalar functions on $(-1,1)$. Furthermore, $\phi$ is a $G_2$-structure if and only if $h\ne0$ and
\begin{align*}
\rho=\sqrt{f_\alpha^2+f_\beta^2}\ne0
\end{align*}
on $(-1,1)$. In this case, the metric induced by $\phi$ is
\begin{equation*}
g=\rho^{-\frac{4}{3}}h^2 dr\otimes dr+ \rho^\frac{2}{3}g_1;
\end{equation*}
see~\cite[Section~5.2]{H25}. It is easy to verify that
\begin{equation}\label{Hodge stars for induced metric - irreducible}
\begin{aligned}
    * \omega &= \frac{1}{2}h\,dr\wedge\omega^2, \quad * (dr\wedge\omega) = \frac{1}{2}\rho^{\frac{4}{3}}h^{-1}\omega^2, \quad * \alpha= -\rho^{-\frac{2}{3}}h\,dr\wedge\beta, \quad * \beta = \rho^{-\frac{2}{3}}h\,dr\wedge\alpha,
\end{aligned}
\end{equation}
where $*$ is the Hodge star given by~$g$, and
\begin{align}\label{SigmaHodge - irreducible}
   *_0\omega = \frac{1}{2}\rho^{\frac{2}{3}}(0)\,\omega^2, \qquad *_0\alpha=\beta, \qquad *_0\beta=-\alpha, \qquad *_01=\rho(0)^2\vol_1, 
\end{align}
where $*_0$ is the Hodge star given by~$g_0^{\Sigma}=i^*g$.
These relations enable the following computations.

\begin{lemma}\label{lem_starddelta_irred}
If $\phi$ is a $G_2$-structure on $M$ given by~(\ref{3formisotropyirrep}), then
    \begin{align*}
        *\phi &= \frac{1}{2}\rho^{\frac{4}{3}}\omega^2 - \rho^{-\frac{2}{3}}f_\alpha h\,dr\wedge\beta + \rho^{-\frac{2}{3}}f_\beta h\,dr\wedge\alpha,
        \\
        d\phi &= (f_\alpha'-3h)\,dr\wedge\alpha + f_\beta'\,dr\wedge\beta -2f_\beta\,\omega^2, 
        \qquad\qquad 
        \delta\phi = -\frac{4}{3}\rho^{\frac{1}{3}}h^{-1}(\rho'-3\rho^{-1}f_\alpha h)\,\omega.
\end{align*}
\end{lemma}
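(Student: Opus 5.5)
Everything follows from the already recorded identities \eqref{Hodge stars for induced metric - irreducible} and \eqref{exterior derivative for irreducible case}, together with linearity. The plan is to compute $*\phi$, then $d\phi$, then $\delta\phi$, in that order. Throughout we use that $h$, $f_\alpha$, $f_\beta$ depend only on $r$, and that $d$ acting on forms pulled back from $G/K$ agrees with the orbit exterior derivative. The last point holds because $\omega,\alpha,\beta$ are $r$-independent invariant forms on $I\times G/K$.

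For $*\phi$ we apply \eqref{Hodge stars for induced metric - irreducible} term by term:
\begin{align*}
*(h\,dr\wedge\omega)&=\tfrac12\rho^{4/3}\omega^2,\\
*(f_\alpha\alpha)&=-\rho^{-2/3}f_\alpha h\,dr\wedge\beta,\\
*(f_\beta\beta)&=\rho^{-2/3}f_\beta h\,dr\wedge\alpha.
\end{align*}
This gives the first formula directly.

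For $d\phi$ we differentiate each term. First, $d(h\,dr\wedge\omega)=-h\,dr\wedge d\omega=-3h\,dr\wedge\alpha$. Second, $d(f_\alpha\alpha)=f_\alpha'\,dr\wedge\alpha+f_\alpha\,d\alpha$. Third, $d(f_\beta\beta)=f_\beta'\,dr\wedge\beta-2f_\beta\,\omega^2$. We also need $d\alpha=0$, which follows from $\alpha=\tfrac13 d\omega$. Summing the three pieces yields the stated expression.

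For $\delta\phi$ we use $\delta\phi=(-1)^3*^{-1}d*\phi=-*^{-1}d*\phi$ in dimension $7$.
\begin{itemize}
\item Differentiate $*\phi$:
\[
d\bigl(\tfrac12\rho^{4/3}\omega^2\bigr)=\tfrac23\rho^{1/3}\rho'\,dr\wedge\omega^2+\rho^{4/3}\omega\wedge d\omega,
\]
and the last term is $3\rho^{4/3}\omega\wedge\alpha$.
\item The $dr$-terms: $d(\rho^{-2/3}f_\alpha h\,dr\wedge\beta)=-\rho^{-2/3}f_\alpha h\,dr\wedge d\beta=2\rho^{-2/3}f_\alpha h\,dr\wedge\omega^2$. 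The term with $dr\wedge\alpha$ has vanishing derivative, since $d\alpha=0$.
\item We need $\omega\wedge\alpha=0$. It holds because $\alpha$ is the real part of a $(3,0)$-form, so it is primitive of type $(3,0)+(0,3)$ and its wedge with the $(1,1)$-form $\omega$ vanishes. This can also be checked directly from the coordinates \eqref{invariant tensors in coordinates - irreducible - alphabeta}.
\item Collecting terms, $d*\phi=\bigl(\tfrac23\rho^{1/3}\rho'-2\rho^{-2/3}f_\alpha h\bigr)dr\wedge\omega^2=\tfrac23\rho^{1/3}(\rho'-3\rho^{-1}f_\alpha h)\,dr\wedge\omega^2$.
\item To invert the Hodge star, apply $*$ to $*\omega=\tfrac12 h\,dr\wedge\omega^2$ and use $**=1$ on a $7$-manifold. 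This gives $*(dr\wedge\omega^2)=2h^{-1}\omega$, and since $**=1$, $*^{-1}=*$.
\item Therefore $\delta\phi=-\tfrac23\rho^{1/3}(\rho'-3\rho^{-1}f_\alpha h)\cdot 2h^{-1}\omega$, which is the claimed formula.
\end{itemize}

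The main obstacle is sign and normalisation bookkeeping. This includes the orientation convention behind \eqref{Hodge stars for induced metric - irreducible}, the sign $(-1)^k$ in $\delta$, and confirming $\omega\wedge\alpha=0$ and $d\alpha=0$. I would verify these against the coordinate expressions for $\omega,\alpha,\beta$ before assembling the final formulas.
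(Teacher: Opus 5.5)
Your proposal is correct and follows the paper's own proof. You apply the Hodge star identities and the structure equations $d\omega=3\alpha$, $d\beta=-2\omega^2$ term by term, then compute $\delta\phi=-*d*\phi$ using $**=1$ in dimension $7$. Your version also makes explicit the facts $d\alpha=0$, $\omega\wedge\alpha=0$ and $*(dr\wedge\omega^2)=2h^{-1}\omega$, which the paper uses without comment.
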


\begin{proof}
The formulas for $*\phi$ and $d\phi$ are direct consequences of~\eqref{Hodge stars for induced metric - irreducible} and~\eqref{exterior derivative for irreducible case}, respectively.
Next,
\begin{align*}
\delta \phi &= -* d* (h\,dr\wedge\omega + f_\alpha\alpha + f_\beta\beta)
= -* d\Big(\frac{1}{2}\rho^{\frac{4}{3}}\omega^2 - \rho^{-\frac{2}{3}}f_\alpha h\,dr\wedge\beta + \rho^{-\frac{2}{3}}f_\beta h\,dr\wedge\alpha\Big)
\\
&= -*\Big(\Big(\frac{2}{3}\rho^\frac{1}{3}\rho' - 2\rho^{-\frac{2}{3}}f_\alpha h\Big) \,dr\wedge\omega^2\Big)
= -\frac{4}{3}\rho^{\frac{1}{3}}h^{-1}(\rho'-3\rho^{-1}f_\alpha h)\,\omega.
\end{align*}
\end{proof}

\begin{remark}\label{rem_closed_irred}
If $\phi$ is closed, then
\begin{align*}
f_\alpha'=3h, \qquad f_\beta'=f_\beta=0.
\end{align*}
Consequently, $\rho=f_\alpha$ and
\begin{equation*}
\delta\phi =-\frac{4}{3}(\rho^{\frac{1}{3}}\rho'h^{-1} - 3f_\alpha\rho^{-\frac{2}{3}})\,\omega =0.
\end{equation*}
In other words, every closed $G_2$-structure is coclosed. This implies that the local solutions to the Poisson equation~\eqref{Poisson_equation} obtained on $(0,1)\times G/K$ for closed $G_2$-invariant $\eta$ via~\cite[Theorem~1.1]{BP25} are never $G_2$-invariant.
\end{remark}

We can now compute the $G_2$-Laplacian.

\begin{proposition}\label{G2 Laplacian - irreducible case}
If $\phi$ is a $G_2$-structure on $M$ given by~(\ref{3formisotropyirrep}), then
\begin{align*}
\Delta_\phi\phi &= -\Big(\Big(\frac{4}{3}\rho^{\frac{1}{3}}h^{-1}(\rho'-3\rho^{-1}f_\alpha h)\Big)'+4\rho^{-\frac{2}{3}}(f_\alpha'-3h)\Big)\,dr\wedge\omega
\\
&\hphantom{=}~- (4\rho^{\frac{1}{3}}h^{-1}(\rho'-3\rho^{-1}f_\alpha h)+\rho^\frac{2}{3}h^{-1}(\rho^{\frac{2}{3}}h^{-1}(f_\alpha'-3h))')\,\alpha
\\
&\hphantom{=}~+ \rho^\frac{2}{3}h^{-1}(12\rho^{-\frac{4}{3}}f_\beta h -(\rho^{\frac{2}{3}}h^{-1}f_\beta')')\,\beta.
\end{align*}
\end{proposition}

\begin{proof}
Lemma~\ref{lem_starddelta_irred} and~\eqref{exterior derivative for irreducible case} imply
\begin{align*}
d\delta\phi
= -\Big(\frac{4}{3}\rho^{\frac{1}{3}}h^{-1}(\rho'-3\rho^{-1}f_\alpha h)\Big)'dr\wedge\omega - 4h^{-1}\rho^{\frac{1}{3}}(\rho'-3\rho^{-1}f_\alpha h)\,\alpha.
\end{align*}
Similarly,
\begin{align*}
\delta d\phi &= *\,d*((-3h+f_\alpha')\,dr\wedge\alpha + f_\beta'\,dr\wedge\beta -2f_\beta\omega^2) \\
&= *\,d(\rho^{\frac{2}{3}}h^{-1}(f_\alpha'-3h)\beta - \rho^{\frac{2}{3}}h^{-1}f_\beta'\alpha - 4\rho^{-\frac{4}{3}}f_\beta h\,dr\wedge\omega  ) \\
&= * ((\rho^{\frac{2}{3}}h^{-1}(f_\alpha'-3h))'dr\wedge\beta -2\rho^{\frac{2}{3}}h^{-1}(f_\alpha'-3h)\,\omega^2 + (-(\rho^{\frac{2}{3}}h^{-1}f_\beta')' + 12\rho^{-\frac{4}{3}}f_\beta h )\,dr\wedge\alpha) \\
&= -\rho^\frac{2}{3}h^{-1}(\rho^{\frac{2}{3}}h^{-1}(f_\alpha'-3h))'\alpha -4\rho^{-\frac{2}{3}}(f_\alpha'-3h)\,dr\wedge\omega + \rho^\frac{2}{3}h^{-1}(12\rho^{-\frac{4}{3}}f_\beta h -(\rho^{\frac{2}{3}}h^{-1}f_\beta')')\,\beta.
\end{align*}
\end{proof}

\subsubsection{Existence of solutions}\label{subsubsec_exist_irred}

The 3-form $\eta$ on the right-hand side of the Poisson equation~\eqref{Poisson_equation} appears as
\begin{align*}
\eta=\eta_r\,dr\wedge\omega +\eta_\alpha\,\alpha+\eta_\beta\,\beta,
\end{align*}
where $\eta_r$, $\eta_\alpha$ and $\eta_\beta$ are functions on~$(-1,1)$. Assumption~\eqref{nondeg_assumtion} means that $\eta_r(0)\ne0$. Proposition~\ref{G2 Laplacian - irreducible case} reduces~\eqref{Poisson_equation} to the system
\begin{align}\label{sys_raw_Poisson_irred}
\begin{split}
    -\frac43(\rho^{\frac13}h^{-1}(\rho'-3\rho^{-1}f_\alpha h))'-4\rho^{-\frac23}(f_\alpha'-3h)&=\eta_r,
\\
-4\rho^{\frac13}h^{-1}(\rho'-3\rho^{-1}f_\alpha h) -\rho^{\frac23}h^{-1}(\rho^{\frac23}h^{-1}(f_\alpha'-3h))'&=\eta_\alpha,
\\
\rho^{\frac23}h^{-1}(12\rho^{-\frac43}f_\beta h-(\rho^{\frac23}h^{-1}f_\beta')')&=\eta_\beta,
\end{split}
\end{align}
with 
unknowns $h$, $f_\alpha$ and~$f_\beta$.
We open the brackets in the last two equations, rearrange the terms and note that 
\begin{align}\label{eq_rho_prime}
\rho'&=\rho^{-1}(f_\alpha f_\alpha'+f_\beta f_\beta').
\end{align}
This yields
\begin{align}\label{sys_ab_Poisson_irred}
\begin{split}
f_\alpha''&=-\rho^{-\frac43}h^2\eta_\alpha+12\rho^{-2}f_\alpha h^2-\frac23\rho^{-2}(f_\alpha'+3h)(f_\alpha f_\alpha'+f_\beta f_\beta')+f_\alpha'h^{-1}h',
\\
f_\beta''&=-\rho^{-\frac43}h^2\eta_\beta+12\rho^{-2}f_\beta h^2-\frac23\rho^{-2}f_\beta'(f_\alpha f_\alpha'+f_\beta f_\beta')+f_\beta'h^{-1}h'.
\end{split}
\end{align}
Consequently,
\begin{align*}
\rho''
&=-\rho^{-2}\rho'f_\alpha f_\alpha'+\rho^{-1}f_\alpha'^2+\rho^{-1}f_\alpha f_\alpha''-\rho^{-2}\rho'f_\beta f_\beta'+\rho^{-1}f_\beta'^2+\rho^{-1}f_\beta f_\beta'' 
\\
&=-\rho^{-3}(f_\alpha f_\alpha'+f_\beta f_\beta')^2+\rho^{-1}(f_\alpha'^2+f_\beta'^2)+\rho^{-1}f_\alpha f_\alpha''+\rho^{-1}f_\beta f_\beta''
\\
&=-\frac{5}{3}\rho^{-3}(f_\alpha f_\alpha'+f_\beta f_\beta')^2+\rho^{-1}(f_\alpha'^2+f_\beta'^2)-\rho^{-\frac{7}{3}}h^2(f_{\alpha}\eta_{\alpha}+f_{\beta}\eta_{\beta})+12\rho^{-1}h^2
\\
&\hphantom{=}~+\rho^{-1}h^{-1}h'(f_{\alpha}f_{\alpha}'+f_{\beta}f_{\beta}')
-2\rho^{-3}f_{\alpha}h(f_{\alpha}f_{\alpha}'+f_{\beta}f_{\beta}').
\end{align*}
We substitute this into the first equation in~\eqref{sys_raw_Poisson_irred}. As a result,
\begin{align}\label{sys_r_Poisson_irred}
\begin{split}
\eta_r
&=
\frac43\rho^{-\frac23}h^{-1}\Big(\rho^{-\frac43} h^2(f_
\alpha\eta_\alpha+f_\beta\eta_\beta)-12h^2 +\frac43\rho^{-2}(f_\alpha f_\alpha' +f_\beta f_\beta')^2-f_\alpha'^2-f_\beta'^2+9h^2\Big).
\end{split}
\end{align}








Let us state the Cauchy conditions~\eqref{initial_conditions} in terms of $h$, $f_\alpha$ and~$f_\beta$. The forms $\psi$, $\psi^+$ and $\psi^-$ satisfy
\begin{align}\label{Cauchyirreddata}
\psi|_{\Sigma}=X\,dr\wedge\omega+A\alpha+B\beta,\qquad \psi^+=A'\alpha+B'\beta,
\qquad \psi^-
=X'\omega,
\end{align}
where $X$, $X'$, $A$, $A'$, $B$ and $B'$ are real numbers. The positivity of $\phi$ requires that $X\ne0$ and $A^2+B^2>0$. Throughout the rest of Section~\ref{subsubsec_exist_irred}, we assume without loss of generality that~$\nu=dr$. This implies
\begin{align} \label{Assumption on the metric - irreducible case}
\rho_0^{\frac23}X^{-1}=1,\qquad \rho_0=\sqrt{A^2+B^2}.
\end{align}
Before we state the Cauchy conditions it will be useful to understand the conditions~\eqref{suff_con} in the current context; to that end we have the following lemma.
\begin{lemma}\label{suff_con - irreducible case}
Conditions~(\ref{suff_con}) on the data $\eta$, $\psi$, $\psi^+$ and $\psi^-$ are equivalent to
\begin{align*}
-4(AA'+BB') &= 3\rho_0^\frac{4}{3}X',
\\
(A\eta_\alpha(0)+B\eta_\beta(0)) -\frac34\rho_0^{\frac{4}{3}}\eta_r(0) &= 12\rho_0^{-\frac{2}{3}}B^2 +(A'^2+B'^2) + 6\rho_0^{\frac{2}{3}}A'\\
&\hphantom{=}~-\frac{4}{3}\rho_0^{-2}(AA'+BB')^2 - 8\rho_0^{-\frac{4}{3}}A(AA'+BB').
\end{align*}
\end{lemma}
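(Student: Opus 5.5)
The plan is a direct evaluation of both identities in \eqref{suff_con}. The data \eqref{Cauchyirreddata} are $G$-invariant, so every integrand over $\Sigma$ is constant. Each integral is therefore that constant times $\vol(\Sigma)$, and the factor $\vol(\Sigma)$ cancels.

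With $\nu=dr$ we read off
\[
\psi_{\scperp}=X\omega,\qquad \psi_{\scparal}=A\alpha+B\beta,\qquad \psi^+=A'\alpha+B'\beta,\qquad \psi^-=X'\omega.
\]
The induced metric on $\Sigma$ is $g_0^\Sigma=\rho_0^{2/3}g_1$, because $X=\rho_0^{2/3}$ by \eqref{Assumption on the metric - irreducible case}. Pointwise norms of $k$-forms with respect to $g_0^\Sigma$ are the $g_1$-norms multiplied by $\rho_0^{-2k/3}$. From \eqref{exterior derivative for irreducible case} we have
\[
|\omega|^2_{g_1}=3,\qquad |\alpha|^2_{g_1}=|\beta|^2_{g_1}=4,\qquad \langle\alpha,\beta\rangle_{g_1}=0,\qquad |\omega^2|^2_{g_1}=12.
\]
The last value follows from $*_1\omega=\tfrac12\omega^2$.

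\textbf{First identity.} The left side is $\langle\psi^+,\psi_{\scparal}\rangle=4\rho_0^{-2}(AA'+BB')$. The right side is
\[
-\langle\psi^-,\psi_{\scperp}\rangle=-3\rho_0^{-4/3}XX'=-3\rho_0^{-2/3}X'.
\]
Multiplying by $\rho_0^{2}$ gives exactly the first stated relation.

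\textbf{Second identity: the first-order terms.} I would use \eqref{exterior derivative for irreducible case} and \eqref{SigmaHodge - irreducible} to compute the ingredients.
\begin{itemize}
\item $d^\Sigma\psi_{\scparal}=B\,d\beta=-2B\omega^2$.
\item $*_0\psi_{\scparal}=A\beta-B\alpha$, hence $d^\Sigma *_0\psi_{\scparal}=-2A\omega^2$. Consequently $\delta^\Sigma\psi_{\scparal}=\pm *_0(2A\omega^2)$, which is a multiple of $A\omega$ whose coefficient is fixed by $*_0\omega=\tfrac12\rho_0^{2/3}\omega^2$.
\item $d^\Sigma\psi_{\scperp}=3X\alpha=3\rho_0^{2/3}\alpha$.
\item $\delta^\Sigma\psi_{\scperp}=0$, since $*_0\omega$ is a constant multiple of $\omega^2$ and $d\omega^2=2\omega\wedge 3\alpha=0$ for type reasons (or directly from $d\beta=-2\omega^2$).
\end{itemize}
The squared norms then become explicit:
\begin{itemize}
\item $\|d^\Sigma\psi_{\scparal}\|^2\propto 4B^2\cdot 12\rho_0^{-8/3}$.
\item $\|\psi^++d^\Sigma\psi_{\scperp}\|^2=4\rho_0^{-2}\bigl((A'+3\rho_0^{2/3})^2+B'^2\bigr)$. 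This produces the term $6\rho_0^{2/3}A'$ and a constant.
\item $\|\psi^--\delta^\Sigma\psi_{\scparal}\|^2$ is a multiple of $3\rho_0^{-4/3}(X'-cA)^2$, where $c$ is the constant from $\delta^\Sigma\psi_{\scparal}$.
\end{itemize}

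\textbf{Second identity: the left-hand side.} We have
\[
\langle\eta,\nu\wedge\psi_{\scperp}+\psi_{\scparal}\rangle-2\langle\eta_{\scperp},\psi_{\scperp}\rangle
=4\rho_0^{-2}(A\eta_\alpha+B\eta_\beta)-3\rho_0^{-4/3}X\eta_r
=4\rho_0^{-2}(A\eta_\alpha+B\eta_\beta)-3\rho_0^{-2/3}\eta_r.
\]

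\textbf{Assembling.} Multiply the second identity by $\rho_0^2/4$. Then use the already established first relation to substitute $X'=-\tfrac43\rho_0^{-4/3}(AA'+BB')$. Expanding $(X'-cA)^2$ yields the terms $-\tfrac43\rho_0^{-2}(AA'+BB')^2$ and $-8\rho_0^{-4/3}A(AA'+BB')$. The $A^2$ and $B^2$ constants should combine with the constant from $(A'+3\rho_0^{2/3})^2$ into $12\rho_0^{-2/3}B^2$. Using $A^2+B^2=\rho_0^2$, the $A^2$ pieces cancel against the $9\rho_0^{4/3}$-type constant.

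Conversely, the two displayed relations reverse these steps, so the equivalence holds in both directions.

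The main obstacle is bookkeeping rather than any conceptual step. I expect to spend the effort on pinning down the sign and normalization of $\delta^\Sigma=(-1)^{k}*_0^{-1}d*_0$ on $3$-forms in dimension $6$, and on the conformal scaling of $*_0$ and of norms under $g_0^\Sigma=\rho_0^{2/3}g_1$. The cancellation of the $A^2$ constants is what confirms that both are correct, so I would check it first.
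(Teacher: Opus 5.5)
Your approach is the paper's proof: invariance turns the integrals into pointwise identities, each term is evaluated with $\|\omega\|^2=3\rho_0^{-4/3}$, $\|\alpha\|^2=\|\beta\|^2=4\rho_0^{-2}$ and $\|\omega^2\|^2=12\rho_0^{-8/3}$, and the first relation is used to eliminate $X'$. The quantities you actually compute are all correct, but two things in your bookkeeping need fixing.

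\textbf{The sign of $\delta^\Sigma\psi_{\scparal}$ is not free.} On $3$-forms in dimension $6$ one has $\delta^\Sigma=-*_0d*_0$, and $*_0\omega^2=2\rho_0^{-2/3}\omega$. Hence $\delta^\Sigma\psi_{\scparal}=+4\rho_0^{-2/3}A\,\omega$. The opposite sign would give $+8\rho_0^{-4/3}A(AA'+BB')$ in place of the stated term.

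\textbf{Your account of the cancellations is wrong, and your proposed check cannot detect that sign.} The cancellations occur in two pairs:
\begin{itemize}
\item The $A^2$ terms cancel between $\|\delta^\Sigma\psi_{\scparal}\|^2=48\rho_0^{-8/3}A^2$ and the matching term of $\|\psi^--\delta^\Sigma\psi_{\scparal}\|^2$.
\item The constant $36\rho_0^{-2/3}$ in $\|\psi^++d^\Sigma\psi_{\scperp}\|^2$ cancels against $\|d^\Sigma\psi_{\scperp}\|^2=36\rho_0^{-2/3}$.
\end{itemize}
Neither $\|\delta^\Sigma\psi_{\scparal}\|^2$ nor $\|d^\Sigma\psi_{\scperp}\|^2$ appears in your list of squared norms. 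The relation $A^2+B^2=\rho_0^2$ is never needed.

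Moreover, for any value of $\delta^\Sigma\psi_{\scparal}$,
\[
\|\delta^\Sigma\psi_{\scparal}\|^2-\|\psi^--\delta^\Sigma\psi_{\scparal}\|^2=2\langle\psi^-,\delta^\Sigma\psi_{\scparal}\rangle-\|\psi^-\|^2 .
\]
So the $A^2$ cancellation happens automatically and confirms nothing. Only the cross term $24\rho_0^{-2}AX'$ is sensitive to the sign and normalisation of $\delta^\Sigma\psi_{\scparal}$. After substituting $X'=-\tfrac43\rho_0^{-4/3}(AA'+BB')$, that term produces $-8\rho_0^{-4/3}A(AA'+BB')$.
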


\begin{proof}
Clearly,
\begin{align*}
\|\omega\|^2
=3\rho_0^{-\frac{4}{3}}, \qquad \|\omega^2\|^2=12\rho_0^{-\frac{8}{3}}, \qquad
\|\alpha\|^2=\|\beta\|^2 = 4\rho_0^{-2},
\end{align*}
where $\|\cdot\|=\langle\cdot,\cdot\rangle^{\frac12}$. The first identity follows from the first line in~\eqref{suff_con} and the equalities
\begin{align*}
    \langle\psi^+,\psi_{\scparal}\rangle &= \langle A'\alpha + B'\beta, A\alpha+B\beta\rangle= 4\rho_0^{-2}(AA'+BB'),
    \\
    \langle\psi^-,\psi_{\scperp}\rangle &= \langle X'\omega,X\omega\rangle = 3\rho_0^{-\frac{4}{3}}XX'.
\end{align*}
To prove the second, observe that, by symmetry, integration over $\Sigma$ amounts to multiplication by the volume of~$\Sigma$. Therefore, we can omit the integrals in~\eqref{suff_con} and replace the $L_2$ norms by pointwise norms.
With~\eqref{Assumption on the metric - irreducible case} in mind, the left-hand side of the second identity in~\eqref{suff_con} becomes
\begin{align*}
\langle\eta,\psi\rangle - 2\langle\eta_{\scperp},\psi_{\scperp}\rangle = \langle\eta_{\scparal},\psi_{\scparal}\rangle - \langle\eta_{\scperp},\psi_{\scperp}\rangle 
&=
\langle\eta_\alpha(0)\,\alpha+\eta_\beta(0)\,\beta, A\alpha+B\beta\rangle - \langle\eta_r(0)\,\omega,X \omega\rangle 
\\
&= 4\rho_0^{-2}(A\eta_\alpha(0) + B\eta_\beta(0))- 3\rho_0^{-\frac{2}{3}}\eta_r(0).
\end{align*}
Analogously, using~\eqref{exterior derivative for irreducible case} and~\eqref{SigmaHodge - irreducible}, we find 
\begin{align*}
\|d^\Sigma\psi_{\scparal}\|^2+\|\delta^\Sigma\psi_{\scparal}\|^2 &= 4\|B\omega^2\|^2+16\|A\rho_0^{-\frac{2}{3}}\omega\|^2 = 48B^2\rho_0^{-\frac{8}{3}}+48A^2\rho_0^{-\frac{8}{3}},
\\
\|d^\Sigma\psi_{\scperp}\|^2 + \|\delta^\Sigma\psi_{\scperp}\|^2 &= 9\|X\alpha\|^2 = 36\rho_0^{-\frac{2}{3}},
\\
\|\psi^++d^\Sigma\psi_{\scperp}\|^2 &= \|(A'+3X)\alpha + B'\beta\|^2 =4\rho_0^{-2}(A'^2+B'^2)+36\rho_0^{-\frac{2}{3}}+24\rho_0^{-\frac{4}{3}}A',
\\
\|\psi^--\delta^\Sigma\psi_{\scparal}\|^2 &= \|(X'-4\rho_0^{-\frac{2}{3}}A)\omega\|^2=3\rho_0^{-\frac{4}{3}}X'^2 + 48\rho_0^{-\frac{8}{3}}A^2 -24\rho_0^{-2}AX'.
\end{align*}
Together with the first identity in~\eqref{suff_con}, these calculations imply that the right-hand side of the second identity equals
\begin{align*}
48\rho_0^{-\frac{8}{3}}B^2 + 4\rho_0^{-2}(A'^2+B'^2) +24\rho_0^{-\frac{4}{3}}A' - \frac{16}{3}\rho_0^{-4}(AA'+BB')^2 - 32\rho_0^{-\frac{10}{3}}A(AA'+BB').
\end{align*}
\end{proof}
In light of~\eqref{3formisotropyirrep}, \eqref{eq_rho_prime} and Lemma \ref{lem_starddelta_irred}, the Cauchy conditions~\eqref{initial_conditions} become
\begin{align}\label{Cauchyirreducible}
\begin{split}
f_\alpha(0)&=A,\quad f_\beta(0)=B, \quad h(0)=X,
\\
f_\alpha'(0)-3X&=A',\quad 
f_\beta'(0)=B',\\ 
-\frac43\rho_0^{-\frac43}(AA'+BB')&=X'.
\end{split}
\end{align}
The last of these equalities follows from our hypotheses; see Lemma~\ref{suff_con - irreducible case}. Thus, to prove the existence portion of Theorem~B, we need to solve system~\eqref{sys_ab_Poisson_irred}--\eqref{sys_r_Poisson_irred} subject to
\begin{align*}
f_\alpha(0)=A,\quad f_\beta(0)=B,\quad
h(0)=X,\quad f_\alpha'(0)=A'+3X,\quad f_\beta'(0)=B'.
\end{align*}
To do so, we use a variant of DeTurck's trick; cf.~\cite[Theorem~2]{P13}.

Because derivatives of $h$ are absent from~\eqref{sys_r_Poisson_irred}, standard ODE results do not yield short-time solutions to~\eqref{sys_ab_Poisson_irred}--\eqref{sys_r_Poisson_irred}. To overcome this difficulty, consider a $G_2$-structure 
\begin{align}\label{hatphiireep}
\hat\phi=\hat h\,dr\wedge\omega +\hat f_\alpha\,\alpha+\hat f_\beta\,\beta
\end{align}
with $\hat h'(0)=0$ and $\hat h''(0)\ne0$. For each such $G_2$-structure, there are positive numbers $\epsilon_1$, $\epsilon_2$, $\epsilon_1'$ and $\epsilon_2'$ such that the map
\begin{align*}
\Phi:(-\epsilon_1,\epsilon_2)\times G/K\to (-\epsilon_1',\epsilon_2')\times G/K, \qquad \Phi(r,x)=(\hat{h}'(r),x),
\end{align*}
is a diffeomorphism between two neighbourhoods of~$\Sigma$. If $\hat\phi$ solves
\begin{align}\label{Poisson-DeTurck_irred}
\Delta_{\hat\phi}\hat\phi=\Phi^*\eta,
\end{align}
then $\phi=(\Phi^{-1})^*\hat\phi$ solves~\eqref{Poisson_equation}. Moreover, suppose that 
\begin{align}\label{inicon_DeTurck_irred}
\hat\phi|_\Sigma=(\Phi^*\psi)|_\Sigma,\qquad 
i^*(\iota_{\hat{\mathbf n}} d\hat\phi)=\psi^+,\qquad (\hat\delta\hat\phi)_{\scparal}=\psi^-,
\end{align}
 where $i$ is the embedding $\Sigma\to M$, the vector field $\hat{\mathbf n}$ is the image of $\mathbf n$ under $d\Phi^{-1}$, and $\hat\delta$ is the codifferential with respect to the metric induced by~$\hat\phi$. In this case, $\phi=(\Phi^{-1})^*\hat\phi$ satisfies the Cauchy conditions~\eqref{initial_conditions}. Thus, the proof of the existence portion of Theorem~B will be complete if we produce $\hat\phi$ of the form~\eqref{hatphiireep}  solving~\eqref{Poisson-DeTurck_irred}--\eqref{inicon_DeTurck_irred}. Our next step is to state this problem in terms of $\hat h$, $\hat f_\alpha$ and~$\hat f_\beta$.

The pullback $\Phi^*\eta$ is given by
\begin{align*}
\Phi^*\eta=\hat h''\hat\eta_r\,dr\wedge\omega +\hat\eta_\alpha\,\alpha+\hat\eta_\beta\,\beta,
\end{align*}
where $\hat\eta_r=\eta_r\circ \hat h'$, $\hat\eta_\alpha=\eta_\alpha\circ \hat h'$ and $\hat\eta_\beta=\eta_\beta\circ \hat h'$. Substituting $\Phi^*\eta$ for $\eta$ and $\hat{\phi}$ for $\phi$ in~\eqref{sys_ab_Poisson_irred} and~\eqref{sys_r_Poisson_irred}, and noting that  $X\eta_r(0)\ne 0$ by~\eqref{nondeg_assumtion}, 
we conclude that~\eqref{Poisson-DeTurck_irred} is equivalent to
\begin{align}\label{sys_DeTurck_irred}
\begin{split}
\hat f_\alpha''&=-\hat\rho^{-\frac43}\hat h^2\hat\eta_\alpha+12\hat \rho^{-2}\hat f_\alpha \hat h^2-\frac23\hat \rho^{-2}(\hat f_\alpha'+3\hat h)(\hat f_\alpha \hat f_\alpha'+\hat f_\beta\hat f_\beta')+\hat f_\alpha'\hat h^{-1}\hat h',
\\
\hat f_\beta''&=-\hat\rho^{-\frac43}\hat h^2\hat\eta_\beta+12\hat\rho^{-2}\hat f_\beta \hat h^2
-\frac23\hat\rho^{-2}\hat f_\beta'(\hat f_\alpha\hat f_\alpha'+\hat f_\beta\hat f_\beta')
+\hat f_\beta'\hat h^{-1}\hat h',
\\
\hat h''&=\frac43\hat\rho^{-\frac23}\hat h^{-1}\hat\eta_r^{-1}(\hat\rho^{-\frac43}\hat h^2(\hat f_\alpha\hat\eta_\alpha
+\hat f_\beta\hat\eta_\beta)
-12\hat\rho^{-2}\hat h^2(\hat f_\alpha^2+\hat f_\beta^2))
\\
&\hphantom{=}~+\frac43\hat\rho^{-\frac23}\hat h^{-1}\hat\eta_r^{-1}\Big(\frac43\hat\rho^{-2}(\hat f_\alpha\hat f_\alpha'+\hat f_\beta\hat f_\beta')^2-\hat f_\alpha'^2-\hat f_\beta'^2+9\hat h^2\Big),
\end{split}
\end{align}
with $\hat\rho=\sqrt{\hat f_\alpha^2+\hat f_\beta^2}$. Using~\eqref{hatphiireep}, Lemma \ref{lem_starddelta_irred}, the formula $\hat{\mathbf{n}}=(h''(0))^{-1}\mathbf{n}$ and Lemma~\ref{suff_con - irreducible case}, we find that conditions~\eqref{inicon_DeTurck_irred} hold if and only if
\begin{align*}
\hat f_\alpha(0)=A,\quad \hat f_\beta(0)=B,\quad \hat h(0)=\hat h''(0)X,\quad \hat f_\alpha'(0)-3\hat h(0)=\hat h''(0)A',\quad \hat{f}_\beta'(0)=\hat h''(0)B'.
\end{align*}
Let us choose a positive number $\kappa$ and supplement these equalities with the requirement $\hat h(0)=\kappa$. As a consequence,
\begin{align*}
\hat h''(0)=\kappa X^{-1}\ne0.
\end{align*}
We also demand that $\hat h'(0)=0$ to ensure that the map $r\mapsto \hat h'(r)$ defines a diffeomorphism between neighbourhoods of~$\Sigma$. Thus, we aim to solve system~\eqref{sys_DeTurck_irred} under the conditions
\begin{align}\label{inicon_DeTurck_ODE_ir}
\begin{split}
\hat f_\alpha(0)=A,\quad \hat f_\beta(0)&=B,\quad \hat h(0)=\kappa ,\\
\hat f_\alpha'(0)&=\kappa (X^{-1}A'+3),\quad \hat f_\beta'(0)=\kappa X^{-1}B',\quad \hat h'(0)=0,\quad \hat h''(0)=\kappa X^{-1}.
\end{split}
\end{align}
This would yield the existence of $\hat\phi$ satisfying~\eqref{Poisson-DeTurck_irred}--\eqref{inicon_DeTurck_irred}.

Let us employ classical ODE results. The Picard--Lindel\"of theorem implies that~\eqref{Poisson-DeTurck_irred} has a solution for which the first six equalities of~\eqref{inicon_DeTurck_ODE_ir} hold. We claim that, with our assumptions, this solution necessarily satisfies the seventh equality. Indeed, the formula for~$\hat{h}''$ in~\eqref{sys_DeTurck_irred} implies
\begin{align*}
\hat h''(0)&=\frac43\kappa \rho_0^{-\frac23}\eta_r^{-1}(0)(\rho_0^{-\frac43}(A\eta_\alpha(0)+B\eta_\beta(0))
-12)
\\
&\hphantom{=}~+\frac43\kappa \rho_0^{-\frac23}\eta_r^{-1}(0)\Big(
\frac43\rho_0^{-2}(A(X^{-1}A'+3)+X^{-1}BB')^2-(X^{-1}A'+3)^2-X^{-2}B'^2+9\Big).
\end{align*}
Consequently, $\hat{h}''(0)$ equals $\kappa X^{-1}$ if and only if
\begin{align*}
\begin{split}
\rho_0^{-\frac43}X^2(A\eta_\alpha(0)&+B\eta_\beta(0))-\frac34\rho_0^{\frac23}X\eta_r(0)-12\rho_0^{-2}X^2B^2
\\
&+8\rho_0^{-2}XA(AA'+BB')-6XA'
+\frac43\rho_0^{-2}\left(AA'+BB'\right)^2-A'^2-B'^2=0.
\end{split}
\end{align*}
However, this identity follows from~\eqref{Assumption on the metric - irreducible case} and Lemma~\ref{suff_con - irreducible case}. Thus, the proof of the existence part of Theorem~B is complete.

\subsubsection{Uniqueness}\label{subsec_uniq_irred}

Suppose that $\phi_1$ and $\phi_2$ are solutions to~\eqref{Poisson_equation}--\eqref{initial_conditions} in a neighbourhood of~$\Sigma$ with
\begin{equation*}
\phi_i=h_i\,dr\wedge\omega + f_{\alpha i}\,\alpha + f_{\beta i}\,\beta,\qquad i=1,2.
\end{equation*}
Choose $\kappa>0$ and consider the initial-value problems
\begin{align}\label{eq_harm_flow}
\hat h_i''=\frac{\hat h_i}{h_i\circ\hat h_i'},\qquad \hat h_i(0)=\kappa ,\qquad \hat h_i'(0)=0,\qquad i=1,2.
\end{align}
Since $\phi_1$ and $\phi_2$ are $G_2$-structures, the values of the functions $h_1$ and $h_2$ are nonzero. By the Picard--Lindel\"of theorem, there exists an interval $(-\epsilon,\epsilon)$ on which each of the problems~\eqref{eq_harm_flow} has a unique solution. We can choose $\epsilon$ small enough to ensure that $\hat h_i$ and $\hat h_i''$ are nonzero on this interval.

Let $\Phi_i$ be the diffeomorphisms between neighbourhoods of $\Sigma$ induced by the maps $r\mapsto\hat h_i'(r)$. The $G_2$-structures $\hat\phi_i=\Phi_i^*\phi_i$ satisfy~\eqref{Poisson-DeTurck_irred}--\eqref{inicon_DeTurck_irred}. This fact, together with~\eqref{eq_harm_flow}, implies that the components $\hat h_i$, $\hat f_{\alpha i}$ and $\hat f_{\beta i}$ of each $\hat\phi_i$ solve system~\eqref{sys_DeTurck_irred} under conditions~\eqref{inicon_DeTurck_ODE_ir}. Invoking the Picard--Lindel\"of theorem again, we conclude that
\begin{align*}
\hat h_1=\hat h_2,\qquad \hat f_{\alpha1}=\hat f_{\alpha2},\qquad \hat f_{\beta1}=\hat f_{\beta2}.
\end{align*}
Therefore, $\Phi_1=\Phi_2$ and $\hat\phi_1=\hat\phi_2$, whence
\begin{align*}
\phi_1=(\Phi_1^{-1})^*\hat\phi_1=(\Phi_2^{-1})^*\hat\phi_2=\phi_2
\end{align*}
in a neighbourhood of $\Sigma$. This 
proves the uniqueness portion of Theorem~B.

\subsection{Orbit type $SU(3)/\mathbb T^2$}\label{subsec_flag_mf}
Assume that $G$ is the group $SU(3)$ and $K$ is the subgroup $\mathbb T^2$ of diagonal matrices.
In this case, the isotropy representation of $G/K$ 
splits into three inequivalent irreducible 2-dimensional summands.

\subsubsection{Invariant forms on the orbit}\label{subsubsec_inv_orb_red}

Suppose that $\mathfrak m$ is the subspace of $\mathfrak{su}(3)$ 
with generic element of the form
\begin{align*}
X= \begin{pmatrix}
0&u_1&u_2\\
-\overline{u}_1&0&u_3\\
-\overline{u}_2&-\overline{u}_3&0
\end{pmatrix},\qquad u_1,u_2,u_3\in\mathbb C.
\end{align*}
For every $t=\text{diag}(e^{i\theta_1},e^{i\theta_2},e^{i\theta_3})\in K$, the sum $\theta_1+\theta_2+\theta_3$ equals 0 modulo $2\pi$, and
\begin{align}\label{eq_Ad_expl}
\Ad^K(t)(X)=t X t^{-1}=\begin{pmatrix}
0 & e^{i(\theta_1 - \theta_2)} u_1 & e^{i(\theta_1 - \theta_3)} u_2 \\
-e^{i(\theta_2 - \theta_1)} \overline{u}_1 & 0 & e^{i(\theta_2 - \theta_3)} u_3 \\
-e^{i(\theta_3 - \theta_1)} \overline{u}_2 & -e^{i(\theta_3 - \theta_2)} \overline{u}_3 & 0
\end{pmatrix}.
\end{align}
Let $\{e_j\}_{j=1}^6$ be the basis of $\mathfrak{m}$ given by
\begin{align*}
e_1=\frac{1}{2i}\lambda_7, \qquad e_2=\frac{1}{2i}\lambda_6, \qquad e_3=-\frac{1}{2i}\lambda_5, \qquad e_4=\frac{1}{2i}\lambda_4,\qquad e_5=\frac1{2i}\lambda_2, \qquad e_6=\frac{1}{2i}\lambda_1,
\end{align*}
where $\lambda_j$ are the Gell-Mann matrices; cf.~\cite{CS02}. Denote the corresponding dual basis~$\{e^j\}_{j=1}^6$.
The space $\mathfrak m$ admits the $\tilde K$-invariant decomposition
\begin{align*}
\mathfrak m=\mathfrak m_1\oplus\mathfrak m_2\oplus\mathfrak m_3,
\quad
\mathfrak{m}_1=\text{span}\{e_1,e_2\}, \quad \mathfrak{m}_2=\text{span}\{e_3,e_4\}, \quad \mathfrak{m}_3=\text{span}\{e_5,e_6\}.
\end{align*}
It is easy to check that the restrictions of the representation $\Ad^K$ to $\mathfrak m_1$, $\mathfrak m_2$ and $\mathfrak m_3$ are irreducible and pairwise inequivalent. The following result will help us describe the spaces of $G$-invariant forms on~$G/K$.


\begin{lemma}\label{lem_effect}
The dimension of the largest trivial submodule of $[\Lambda_0^{1,1}]$ equals~2.
\end{lemma}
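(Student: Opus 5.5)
The plan is to identify $[\Lambda_0^{1,1}]$, as a $\tilde K$-module, with the adjoint representation of $SU(3)$ restricted to the image $\tilde K$. Then I will count invariants, which amounts to computing a centraliser.

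\textbf{Step 1: the identification.} Section~\ref{subsec_rep_theory} recorded that $[\Lambda_0^{1,1}]\cong\mathfrak{su}(3)$, with $\tilde K$ acting through the adjoint representation of $SU(3)$. Hence the largest trivial submodule of $[\Lambda_0^{1,1}]$ is isomorphic to the centraliser
\begin{equation*}
\mathfrak z=\{Y\in\mathfrak{su}(3):\Ad(k)Y=Y\text{ for all }k\in\tilde K\}.
\end{equation*}

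\textbf{Step 2: describing $\tilde K$.} Next I will describe $\tilde K=\Ad^K(\mathbb T^2)$ as a subgroup of $SU(3)$, using the basis $\{e_j\}$ that identifies $\mathfrak m$ with $\mathbb C^3$. Formula~\eqref{eq_Ad_expl} shows that $t$ acts on $u_1,u_2,u_3$ by the characters $e^{i(\theta_1-\theta_2)}$, $e^{i(\theta_1-\theta_3)}$ and $e^{i(\theta_2-\theta_3)}$. The complex structure $J$ induced by the $SU(3)$-structure may conjugate some of these coordinates; the choice of signs in $e_3$ suggests exactly that. After such conjugations, $\tilde K$ is a torus of diagonal matrices in $SU(3)$ whose entries are characters of $\mathbb T^2$ with product equal to $1$. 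A natural candidate is $e^{i(\theta_1-\theta_2)}$, $e^{i(\theta_2-\theta_3)}$, $e^{i(\theta_3-\theta_1)}$.

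The key point to verify is that these three characters are pairwise distinct as functions on $\mathbb T^2$. This is the same as saying that $\mathfrak m_1,\mathfrak m_2,\mathfrak m_3$ are inequivalent, which was noted just before the lemma. It also needs to hold whichever conjugations $J$ imposes. Together with the fact that $\tilde K$ is $2$-dimensional (the action is almost effective), this shows that $\tilde K$ is a maximal torus $T$ of $SU(3)$.

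\textbf{Step 3: the centraliser.} For a maximal torus $T$ of $SU(3)$, the fixed subspace of $\Ad(T)$ on $\mathfrak{su}(3)$ is the Cartan subalgebra $\mathfrak t$ of traceless diagonal skew-Hermitian matrices, which has dimension $2$. Concretely, I will write $\mathfrak{su}(3)\otimes\mathbb C=\mathfrak t_{\mathbb C}\oplus\bigoplus_{j\ne k}\mathbb C E_{jk}$. The torus acts on $E_{jk}$ by the ratio of the $j$th and $k$th diagonal characters. That ratio is nontrivial precisely because the characters are distinct, so only $\mathfrak t$ survives and the dimension is $2$.

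The main obstacle is Step 2: pinning down exactly how $J$ embeds $\tilde K$ into $SU(3)$, including the conjugations. If a conjugation made two diagonal characters coincide, the centraliser would contain a copy of $\mathfrak{u}(2)$ and the dimension would exceed $2$. My first attempt will be to compute $J$ directly from $\omega$ in this basis, namely $e^{12}+e^{34}+e^{56}$ up to signs as in~\cite{CS02}. Failing that, I would argue abstractly. Distinct characters on the three summands $\mathfrak m_j$ force three distinct weights up to sign. Having weights $a,b,c$ with $\pm a\pm b\pm c=0$ as the determinant condition, and all pairs distinct, gives distinct diagonal entries; a coincidence would contradict the inequivalence of the $\mathfrak m_j$.
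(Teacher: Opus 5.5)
Your argument is correct and is essentially the paper's proof. The paper likewise identifies $[\Lambda_0^{1,1}]$ with $\mathfrak{su}(3)$ under the adjoint action of $\tilde K$, uses that $\Ad^K$ is a triple cover (so $\tilde K$ is a $2$-torus), and splits $\mathfrak{su}(3)=\tilde{\mathfrak t}\oplus\tilde{\mathfrak m}_1\oplus\tilde{\mathfrak m}_2\oplus\tilde{\mathfrak m}_3$ with only $\tilde{\mathfrak t}$ trivial, which is your root-space computation in Step~3; your worry about how $J$ embeds $\tilde K$ is unnecessary, since any $2$-dimensional torus in the rank-$2$ group $SU(3)$ is maximal, and a maximal torus fixes exactly its own Lie algebra in $\mathfrak{su}(3)$ however it is embedded.
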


\begin{proof}
The map $\Ad^K:K\to GL(\mathfrak{m})$ is a triple covering with kernel $\{\Id,e^{\frac{2}{3}i\pi}\Id,e^{\frac{4}{3}i\pi}\Id\}$. The image $\tilde K$ of this map is isomorphic to $K$ as a Lie group. Exploiting~\eqref{eq_Ad_expl} and arguing as above, we find that the Lie algebra $\mathfrak{su}(3)$, viewed here as a $\tilde K$-module via the adjoint representation of~$SU(3)$, admits the decomposition
\begin{align*}
\mathfrak{su}(3)\cong\tilde{\mathfrak t}\oplus\tilde{\mathfrak m}_1\oplus\tilde{\mathfrak m}_2\oplus\tilde{\mathfrak m}_3,
\end{align*}
where $\tilde{\mathfrak t}$ is the Lie algebra of $\tilde K$
and each $\tilde{\mathfrak m}_i$ is an irreducible 2-dimensional submodule. The representation of $\tilde K$ on $\tilde{\mathfrak t}$ induced by the adjoint representation of~$SU(3)$ is trivial. Since $\tilde{\mathfrak t}$ is 2-dimensional and $[\Lambda_0^{1,1}]$ is isomorphic to~$\mathfrak{su}(3)$ (see the discussion at the beginning of Section~\ref{sec_Poisson}), this observation completes the proof.
%
%
\end{proof}

An argument based on Schur's lemma shows that every $G$-invariant Riemannian metric on $G/K$ is a linear combination of
\begin{align*}
g_1 &= e^1 \otimes e^1 + e^2 \otimes e^2, \qquad g_2 = e^3 \otimes e^3 + e^4 \otimes e^4, \qquad g_3 =e^5 \otimes e^5 + e^6 \otimes e^6.
\end{align*}
For every $r\in(-1,1)$, the space $\Omega^k(\{r\}\times G/K)^G$ is isomorphic to~$\Lambda^k(\mathfrak{m}^*)^K$. The first line of~\eqref{decomposition of exterior powers under su(3)} and the fact that the representation $\Ad^K$ on $\mathfrak m$ has no trivial subrepresentations imply
$$\Omega^1(\{r\}\times G/K)^G=\{0\}.$$
From the second line,
\begin{align*}
\begin{split}
\Lambda^2(\mathfrak{m}^*) &\cong \mathfrak m \oplus[\Lambda_0^{1,1}]
\oplus\mathbb R.
\end{split}
\end{align*}
Lemma~\ref{lem_effect} shows that
$$
\dim\big(\Lambda^2(\mathfrak{m}^*)^K\big)=3.
$$
One checks easily the $\tilde K$-invariance of the 2-forms
\begin{equation}\label{invariant tensors in coordinates - reducible - omega}
\omega_1 = e^{12},\qquad \omega_2=e^{34}, \qquad \omega_3=e^{56},
\end{equation}
which implies that
$$\Omega^2(\{r\}\times G/K)^G=\text{span}\{\omega_1,\omega_2,\omega_3\}.$$
From the third line of~\eqref{decomposition of exterior powers under su(3)},
\begin{align*}
\Lambda^3(\mathfrak{m}^*) &\cong \mathbb{R} \oplus \mathbb{R} \oplus [[\Lambda^{2,1}_0]]\oplus \mathfrak m.
\end{align*}
Invoking~\eqref{decomp_tens} as in Section~\ref{subsubsection - invariant forms and G2 Laplacian}, we can show that $[[\Lambda^{2,1}_0]]$ has no trivial submodules. This means that
$$
\dim\big(\Lambda^3(\mathfrak{m}^*)^K\big)=2.
$$
Straightforward verification proves the $\tilde K$-invariance of the 3-forms
\begin{equation}\label{invariant tensors in coordinates - reducible - alphabeta}
\alpha = e^{246}-e^{235}-e^{145}-e^{136}, \qquad  \beta = e^{135}-e^{146}-e^{236}-e^{245}.
\end{equation}
Thus,
$$\Omega^3(\{r\}\times G/K)^G=\text{span}\{\alpha,\beta\}.$$

Abbreviating $\omega_j\wedge\omega_k$ to~$\omega_{jk}$, we find
\begin{align}\label{exterior derivative for reducible case}
    d\omega_j=\frac{1}{2}\alpha, \qquad d\beta=-2(\omega_{12}+\omega_{23}+\omega_{13}), \qquad j=1,2,3;
\end{align}
see, e.g.,~\cite[Section~2]{J22} and~\cite{CS02}. The wedge products $\omega_j\wedge \alpha$ and $\omega_j\wedge \beta$ {and the differentials $d\omega_{jk}$} vanish because 
$$
\Omega^5(\{r\}\times G/K)^G\cong\Omega^1(\{r\}\times G/K)^G=\{0\}.
$$
If $*_\mathrm s$ is the Hodge star of the metric $g_1+g_2+g_3$ and $\vol_\mathrm s=e^{123456}$, then
\begin{align*}
    \omega_j\wedge*_\mathrm s\,\omega_j = \omega_k\wedge\omega_l\wedge \omega_m = \vol_\mathrm s, \qquad \alpha\wedge*_\mathrm s\alpha=\alpha\wedge\beta=4\vol_\mathrm s, \qquad j=1,2,3,
\end{align*}
for every permutaion $(k,l,m)$ of the triple $(1,2,3)$.

\subsubsection{Invariant forms on $M$ and the $G_2$-Laplacian}


Every $G$-invariant $3$-form on $M$ appears as
\begin{equation}\label{3formisotropyred}
\phi = h_1\,dr\wedge\omega_1 + h_2\,dr\wedge\omega_2+ h_3\,dr\wedge\omega_3 + f_\alpha\,\alpha + f_\beta\,\beta,
\end{equation}
where $h_1$, $h_2$, $h_3$, $f_\alpha$ and $f_\beta$ are smooth functions on $(-1,1)$. Arguing as in the proof of Lemma~\ref{lem_ind_met_red} below, one can show that $\phi$ is a $G_2$-structure if and only if
\begin{align*}
h_1h_2h_3\ne0,\qquad\rho=\sqrt{f_\alpha^2+f_\beta^2}\ne0,
\end{align*}
and $h_1$, $h_2$ and $h_3$ have the same sign on $(-1,1)$.

\begin{lemma}\label{lem_ind_met_red}
Let $\phi$ be a $G_2$-structure on $M$ given by~(\ref{3formisotropyred}). The metric $g$ induced by $\phi$ satisfies
\begin{equation*}
g=\rho^{-\frac{4}{3}}q^2\,dr\otimes dr + \rho^{\frac{2}{3}}q^{-1}(h_1g_1+h_2g_2+h_3g_3)
\end{equation*}
with $q = (h_1h_2h_3)^{\frac{1}{3}}$.
\end{lemma}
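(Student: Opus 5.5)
The plan is to bring $\phi$, by a $g_j$-preserving change of coframe, to the standard form of a $G_2$-structure. The metric can then be read off directly. We use the same model as in the spherical case: for $h_1=h_2=h_3=h$ the claimed formula must reduce to $\rho^{-\frac43}h^2dr^2+\rho^{\frac23}g_1$ from Section~\ref{subsec_spherical}. Concretely, we regard
\[
dt\wedge(E^{12}+E^{34}+E^{56})+\big(E^{246}-E^{235}-E^{145}-E^{136}\big)
\]
as a $G_2$-structure inducing the metric $dt\otimes dt+\sum_i E^i\otimes E^i$, i.e.\ the convention fixed by the $\alpha,\beta$ of \eqref{invariant tensors in coordinates - reducible - alphabeta}. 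This convention is the one already used for the irreducible case in~\cite[Section~5.2]{H25}.

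\emph{Step 1: eliminate $f_\beta$.} We will check that $\alpha+i\beta$ (or $\alpha-i\beta$) is, up to a constant factor, a product of three complex $1$-forms built from the pairs $(e^1,e^2)$, $(e^3,e^4)$, $(e^5,e^6)$, one from each pair, e.g.\ of the type $(e^1\pm ie^2)\wedge(e^3\pm ie^4)\wedge(e^5\pm ie^6)$. Pointwise, we then rotate the pair $(e^1,e^2)$ by an angle $\theta$ (possibly $r$-dependent). This rotation preserves $g_1,g_2,g_3$ and each $\omega_j$, and multiplies $\alpha+i\beta$ by $e^{\pm i\theta}$. Choosing $\theta$ so that $f_\alpha\alpha+f_\beta\beta$ becomes $\rho\,\tilde\alpha$ in the rotated coframe, we may assume $f_\beta=0$ and $f_\alpha=\rho$ for the purpose of computing $g$. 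This is legitimate because the right-hand side of the claimed formula is invariant under the rotation. Confirming the exact complex factorisation and sign conventions is the one step that needs care; if a sign goes wrong, we will rotate $(e^5,e^6)$ instead or use $-\theta$.

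\emph{Step 2: rescale.} Set $k=q\rho^{-\frac23}$, $dt=k\,dr$, $c_j^2=h_j/k$, and $E^{2j-1}=c_je^{2j-1}$, $E^{2j}=c_je^{2j}$. Here $q$ is the real cube root, so $k$ and the $h_j$ share a sign and $c_j^2>0$; this is exactly where the same-sign condition on the $h_j$ enters. Then $h_j\,dr\wedge\omega_j=dt\wedge E^{2j-1}\wedge E^{2j}$. Since every monomial in $\alpha$ contains exactly one index from each pair, $\rho\,\alpha=\rho(c_1c_2c_3)^{-1}\alpha_E$, where $\alpha_E$ denotes $\alpha$ with each $e^i$ replaced by $E^i$. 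Moreover $c_1c_2c_3=(q/k)^{3/2}=\rho$, so $\phi$ equals the standard form in the coframe $(dt,E^1,\dots,E^6)$.

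\emph{Step 3: conclude.} The induced metric is therefore
\[
dt^2+\sum_i (E^i)^2=k^2dr^2+\sum_j c_j^2g_j=\rho^{-\frac43}q^2dr\otimes dr+\rho^{\frac23}q^{-1}(h_1g_1+h_2g_2+h_3g_3).
\]
The same computation, run backwards, shows that the conditions $h_1h_2h_3\ne0$, $\rho\ne0$ and equal signs of the $h_j$ characterise $G_2$-structures. We expect Step~1, namely matching the complex-volume conventions for $\alpha,\beta$ with the standard $G_2$ form, to be the main obstacle; the rest is bookkeeping.
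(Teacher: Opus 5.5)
Your proposal is correct and is essentially the paper's argument. The paper writes down in one step the coframe $v^0=\rho^{-\frac23}q\,dr$ (your $dt$), $v^i=E^i$ for $i\ge 3$, and $(v^1,v^2)$ a rotation of your $(E^1,E^2)$, and checks that $\phi$ becomes Bryant's form $v^0\wedge(v^{12}+v^{34}+v^{56})+v^{135}-v^{146}-v^{236}-v^{245}$; this differs from your model $dt\wedge\omega_E+\alpha_E$ only by the quarter-turn $E^1\mapsto -E^2$, $E^2\mapsto E^1$, so your convention is legitimate, and the step you flagged works because $\alpha+i\beta=i\,(e^1+ie^2)\wedge(e^3+ie^4)\wedge(e^5+ie^6)$, so rotating $(e^1,e^2)$ by $\theta$ with $\cos\theta=f_\alpha/\rho$ and $\sin\theta=-f_\beta/\rho$ turns $f_\alpha\alpha+f_\beta\beta$ into $\rho\tilde\alpha$.
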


\begin{proof}
One can establish the assertion of the lemma by using the standard formula for the metric associated with a $G_2$-structure; see, e.g.,~\cite[page~539]{B87}. For an alternative proof, define
\begin{align*}
    v^0 = \rho^{-\frac{2}{3}}q\,dr, \quad v^1&=\rho^{-\frac23}h_1^{\frac12}q^{-\frac12}(f_\beta e^1+f_\alpha e^2), \quad  v^2=\rho^{-\frac23}h_1^{\frac12}q^{-\frac12}(f_\beta e^2-f_\alpha e^1), 
    \\v^3&=\rho^\frac{1}{3}h_2^{\frac12}q^{-\frac12}e^3, \quad
 v^4=\rho^\frac{1}{3}h_2^{\frac12}q^{-\frac12}e^4, \quad v^5=\rho^\frac{1}{3}h_3^{\frac12}q^{-\frac12}e^5, \quad v^6=\rho^\frac{1}{3}h_3^{\frac12}q^{-\frac12}e^6.
\end{align*}
Clearly, $\{v^i\}_{i=0}^6$ is a $g$-orthonormal basis of $\mathfrak m^*$. Direct calculation based on~\eqref{3formisotropyred},~\eqref{invariant tensors in coordinates - reducible - omega} and~\eqref{invariant tensors in coordinates - reducible - alphabeta} shows that $\phi$ coincides with the canonical positive 7-form in this basis, i.e.,
\begin{align*}
\phi=v^{012}+v^{034}+v^{056}+v^{135}-v^{146}-v^{236}-v^{245},
\end{align*}
where $v^{i_1\cdots i_l}=v^{i_1}\wedge \cdots \wedge v^{i_l}$. Thus, $g$ must be the metric induced by~$\phi$.
\end{proof}

We compute
\begin{equation}\label{Hodge stars for induced metric - reducible}
    \begin{aligned}
        &*\omega_1 = q^3h_1^{-2}\,dr\wedge\omega_{23}, & &*\omega_2= q^3h_2^{-2}\,dr\wedge\omega_{13},& &*\omega_3 = q^3h_3^{-2}\,dr\wedge\omega_{12},
        \\
        &*(dr\wedge\omega_1) =\rho^{\frac{4}{3}}qh_1^{-2} \omega_{23}, & &*(dr\wedge\omega_2)=\rho^{\frac{4}{3}}qh_2^{-2} \omega_{13},& &*(dr\wedge\omega_3)=\rho^{\frac{4}{3}}qh_3^{-2} \omega_{12}, \\
        &*\alpha = -\rho^{-\frac{2}{3}}q\,dr\wedge\beta, & &*\beta = \rho^{-\frac{2}{3}}q\,dr\wedge\alpha,
    \end{aligned}
\end{equation}
where $*$ is the Hodge star of~$g$, and since $\|dr\|=\rho^{\frac{2}{3}}q^{-1}$,
\begin{equation}\label{SigmaHodge - reducible}
    \begin{aligned}
        &*_0\omega_1 = \rho^{\frac{2}{3}}q^2h_1^{-2}\omega_{23}, & &*_0\omega_2= \rho^{\frac{2}{3}}q^2h_2^{-2}\omega_{13},& &*_0\omega_3 =\rho^{\frac{2}{3}}q^2h_3^{-2}\,dr\wedge\omega_{12}, \\
        &*_0\alpha = \beta, & &*_0\beta = -\alpha, \qquad *_01=\rho^2 \vol_1,
    \end{aligned}
\end{equation}
where $*_0$ is the Hodge star given by $g_0^\Sigma=i^*g$. These equalities help us obtain the following result.

\begin{lemma}\label{lem_calcs_red}
If $\phi$ is a $G_2$-structure on $M$ satisfying~(\ref{3formisotropyred}), then
    \begin{align*}
        *\phi &= \rho^\frac{4}{3}q(h_1^{-1}\omega_{23}+h_2^{-1}\omega_{13}+h_3^{-1}\omega_{12}) - \rho^{-\frac{2}{3}}qf_\alpha\,dr\wedge\beta + \rho^{-\frac{2}{3}}qf_\beta\,dr\wedge\alpha,
        \\
        d\phi &=-\frac{1}{2}(h_1+h_2+h_3-2f_\alpha')\,dr\wedge\alpha + f_\beta'\,dr\wedge\beta -2f_\beta(\omega_{12}+\omega_{23}+\omega_{13}),
        \\
        \delta\phi &= -q^{-3}h_1^2((\rho^\frac{4}{3}qh_1^{-1})'-2\rho^{-\frac{2}{3}}qf_\alpha)\,\omega_1
        \\
        &\hphantom{=}~-q^{-3}h_2^2((\rho^\frac{4}{3}qh_2^{-1})'-2\rho^{-\frac{2}{3}}qf_\alpha)\,\omega_2
        -q^{-3}h_3^2((\rho^\frac{4}{3}qh_3^{-1})'-2\rho^{-\frac{2}{3}}qf_\alpha)\,\omega_3.
    \end{align*}
\end{lemma}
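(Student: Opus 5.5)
The plan mirrors the proof of Lemma~\ref{lem_starddelta_irred}: each formula is a direct computation from the Hodge star identities~\eqref{Hodge stars for induced metric - reducible}, the structure equations~\eqref{exterior derivative for reducible case}, and the vanishing of invariant 5-forms.

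\textbf{Hodge star of $\phi$.} By linearity of $*$ applied to~\eqref{3formisotropyred}, we get
\[
*\phi=\sum_j h_j\,*(dr\wedge\omega_j)+f_\alpha*\alpha+f_\beta*\beta .
\]
Inserting~\eqref{Hodge stars for induced metric - reducible} gives $h_j\rho^{\frac43}qh_j^{-2}=\rho^{\frac43}qh_j^{-1}$ in front of the complementary $\omega_{kl}$, and the $\alpha$, $\beta$ terms as stated. The star identities themselves are checked in the orthonormal coframe $\{v^i\}$ of Lemma~\ref{lem_ind_met_red}. For instance, $dr\wedge\omega_1$ is a multiple of $v^{012}$, whose star is $v^{3456}$; rewriting $v^{3456}$ and $v^{012}$ in terms of $e^i$ and $dr$ produces the factors $q^{-1}h_2h_3\rho^{\frac43}$ and $\rho^{-2}h_1q^{-1}\rho^{\frac23}\cdot\rho^{-\frac23}q$, so the coefficient is $\rho^{\frac43}qh_1^{-2}$ after using $q^3=h_1h_2h_3$. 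The identities are taken as given, but I would spot-check this one.

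\textbf{Exterior derivative.} Since all forms are $G$-invariant, the exterior derivative is $d=dr\wedge\partial_r+d^\Sigma$, with $d^\Sigma$ acting on the invariant forms of the orbit via~\eqref{exterior derivative for reducible case}. This gives
\[
d(h_j\,dr\wedge\omega_j)=-h_j\,dr\wedge d\omega_j=-\tfrac12 h_j\,dr\wedge\alpha .
\]
Also $d(f_\alpha\alpha)=f_\alpha'\,dr\wedge\alpha$, because $d\alpha=d(2d\omega_1)=0$. Finally $d(f_\beta\beta)=f_\beta'\,dr\wedge\beta-2f_\beta(\omega_{12}+\omega_{23}+\omega_{13})$. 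Summing these three contributions yields the stated $d\phi$.

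\textbf{Codifferential.} For a 3-form on a 7-manifold, $\delta=(-1)^3*^{-1}d*=-*d*$, since $**=1$ in odd dimension. I apply $d$ to $*\phi$ term by term.
\begin{itemize}
\item $d(\rho^{\frac43}qh_1^{-1}\omega_{23})=(\rho^{\frac43}qh_1^{-1})'\,dr\wedge\omega_{23}$, because $d\omega_{jk}=0$ on invariant forms.
\item $d(-\rho^{-\frac23}qf_\alpha\,dr\wedge\beta)=\rho^{-\frac23}qf_\alpha\,dr\wedge d\beta=-2\rho^{-\frac23}qf_\alpha\,dr\wedge(\omega_{12}+\omega_{23}+\omega_{13})$.
\item $d(\rho^{-\frac23}qf_\beta\,dr\wedge\alpha)=0$, since $d\alpha=0$.
\end{itemize}
Hence $d*\phi=\sum_j\big((\rho^{\frac43}qh_j^{-1})'-2\rho^{-\frac23}qf_\alpha\big)\,dr\wedge\omega_{kl}$, where $\{j,k,l\}=\{1,2,3\}$.

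It remains to invert the star. From $*\omega_j=q^3h_j^{-2}\,dr\wedge\omega_{kl}$ and $**=1$, we get $*(dr\wedge\omega_{kl})=q^{-3}h_j^2\,\omega_j$. Applying $-*$ then gives exactly the stated $\delta\phi$.

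\textbf{Main obstacle.} The only real risk is sign and power bookkeeping in the star identities, particularly the orientation convention $dr\wedge\vol_\mathrm s$ and the ordering in $**=1$. I would verify $*(dr\wedge\omega_{23})$ directly in the $v$-coframe ($v^{0}\wedge v^{3456}\mapsto v^{12}$) to confirm the coefficient $q^{-3}h_1^2$ and its sign.
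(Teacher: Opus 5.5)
Your proposal is correct and follows the same route as the paper. You read off $*\phi$ from \eqref{Hodge stars for induced metric - reducible}, differentiate termwise using \eqref{exterior derivative for reducible case} (with $d\alpha=2\,dd\omega_1=0$ and $d\omega_{jk}=0$), and obtain $\delta\phi=-*d*\phi$ by inverting $*\omega_j=q^3h_j^{-2}\,dr\wedge\omega_{kl}$. One small slip is in your optional spot-check: in the coframe of Lemma~\ref{lem_ind_met_red}, $v^{012}=h_1\,dr\wedge\omega_1$ and $v^{3456}=\rho^{\frac43}h_2h_3q^{-2}\,\omega_{23}$, so the factors you quote are off, although the coefficient you conclude, $\rho^{\frac43}qh_1^{-2}$, is right.
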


\begin{proof}
The first equality is a direct consequence of~\eqref{Hodge stars for induced metric - reducible}. The second follows from a simple calculation based on~\eqref{exterior derivative for reducible case}. Next, we exploit both~\eqref{Hodge stars for induced metric - reducible} and~\eqref{exterior derivative for reducible case} to find
\begin{align*}
\delta\phi 
&= 
-* d*(h_1\,dr\wedge\omega_1 + h_2\,dr\wedge\omega_2+h_3\,dr\wedge\omega_3+f_\alpha\alpha+f_\beta\beta)
\\
&= -* d(\rho^\frac{4}{3}q(h_1^{-1}\omega_{23}+h_2^{-1}\omega_{13}+h_3^{-1}\omega_{12}) - \rho^{-\frac{2}{3}}qf_\alpha\,dr\wedge\beta + \rho^{-\frac{2}{3}}qf_\beta\,dr\wedge\alpha)
\\
&=-*(((\rho^\frac{4}{3}qh_1^{-1})'-2\rho^{-\frac{2}{3}}qf_\alpha)\,dr\wedge\omega_{23}) 
\\
&\hphantom{=}~-*\,(((\rho^\frac{4}{3}qh_2^{-1})'-2\rho^{-\frac{2}{3}}qf_\alpha)\,dr\wedge\omega_{13})
-*\,(((\rho^\frac{4}{3}qh_3^{-1})'-2\rho^{-\frac{2}{3}}qf_\alpha)\,dr\wedge\omega_{12})
\\
&=-q^{-3}h_1^2((\rho^\frac{4}{3}qh_1^{-1})'-2\rho^{-\frac{2}{3}}qf_\alpha)\,\omega_1
\\
&\hphantom{=}~-q^{-3}h_2^2((\rho^\frac{4}{3}qh_2^{-1})'-2\rho^{-\frac{2}{3}}qf_\alpha)\,\omega_2
-q^{-3}h_3^2((\rho^\frac{4}{3}qh_3^{-1})'-2\rho^{-\frac{2}{3}}qf_\alpha)\,\omega_3.
\end{align*}
\end{proof}

Let us compute the $G_2$-Laplacian.

\begin{proposition}\label{G2 Laplacian - reducible case}
If $\phi$ is a $G_2$-structure on $M$ satisfying~(\ref{3formisotropyred}), then
    \begin{align*}
        \Delta_\phi\phi &= (\rho^{-\frac{2}{3}}q^{-2}h_1^2(h_1+h_2+h_3 - 2f_\alpha')-(q^{-3}h_1^2((\rho^\frac{4}{3}qh_1^{-1})'-2\rho^{-\frac{2}{3}}q f_\alpha))' )\,dr\wedge\omega_1
        \\
        &\hphantom{=}~+ (\rho^{-\frac{2}{3}}q^{-2}h_2^2(h_1+h_2+h_3 - 2f_\alpha')-(q^{-3}h_2^2((\rho^\frac{4}{3}qh_2^{-1})'-2\rho^{-\frac{2}{3}}q f_\alpha))')\,dr\wedge\omega_2
        \\
        &\hphantom{=}~+(\rho^{-\frac{2}{3}}q^{-2}h_3^2(h_1+h_2+h_3 - 2f_\alpha')-(q^{-3}h_3^2((\rho^\frac{4}{3}qh_3^{-1})'-2\rho^{-\frac{2}{3}}q f_\alpha))')\,dr\wedge\omega_3
        \\
        &\hphantom{=}~-\frac{1}{2}q^{-3}(h_1^2(\rho^\frac{4}{3}qh_1^{-1})' + h_2^2(\rho^\frac{4}{3}qh_2^{-1})' + h_3^2(\rho^\frac{4}{3}qh_3^{-1})')\alpha 
        \\
        &\hphantom{=}~+\rho^{\frac{2}{3}}q^{-1}\Big(\rho^{-\frac{4}{3}}q^{-1}f_\alpha(h_1^2 + h_2^2 + h_3^2)
        +\frac{1}{2}(\rho^\frac{2}{3}q^{-1}(h_1+h_2+h_3-2f_\alpha'))'\Big)\alpha
        \\
        &\hphantom{=}~- \rho^\frac{2}{3}q^{-1}((\rho^{\frac{2}{3}}q^{-1}f_\beta')' -\rho^{-\frac{4}{3}}q^{-1}f_\beta(h_1^2+h_2^2+h_3^2))\beta.
    \end{align*}
\end{proposition}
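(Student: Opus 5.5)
The plan is to mirror the proof of Proposition~\ref{G2 Laplacian - irreducible case}: compute $d\delta\phi$ and $\delta d\phi$ separately from Lemma~\ref{lem_calcs_red}, using the structure equations~\eqref{exterior derivative for reducible case} and the Hodge stars~\eqref{Hodge stars for induced metric - reducible}, and then add the two results.

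For $d\delta\phi$, write $\delta\phi=-\sum_j c_j\,\omega_j$ with
\begin{align*}
c_j=q^{-3}h_j^2\big((\rho^{\frac43}qh_j^{-1})'-2\rho^{-\frac23}qf_\alpha\big).
\end{align*}
Since $d\omega_j=\frac12\alpha$ on each orbit, we get
\begin{align*}
d\delta\phi=-\sum_j c_j'\,dr\wedge\omega_j-\frac12\Big(\sum_j c_j\Big)\alpha.
\end{align*}
The $dr\wedge\omega_j$ part already gives the second summand in each of the first three coefficients. The $\alpha$ part splits as
\begin{align*}
-\frac12q^{-3}\sum_j h_j^2(\rho^{\frac43}qh_j^{-1})'+\rho^{-\frac23}q^{-2}f_\alpha\sum_jh_j^2,
\end{align*}
where the second term can be written as $\rho^{\frac23}q^{-1}\rho^{-\frac43}q^{-1}f_\alpha(h_1^2+h_2^2+h_3^2)$. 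This accounts for the first $\alpha$-line and the $f_\alpha$-term in the second $\alpha$-line.

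For $\delta d\phi=*d*d\phi$ (the sign is $(-1)^{k}$ with $k=4$, so it is $+$), start from Lemma~\ref{lem_calcs_red}:
\begin{align*}
d\phi=-\tfrac12 S\,dr\wedge\alpha+f_\beta'\,dr\wedge\beta-2f_\beta(\omega_{12}+\omega_{23}+\omega_{13}),\qquad S=h_1+h_2+h_3-2f_\alpha'.
\end{align*}
Invert~\eqref{Hodge stars for induced metric - reducible} using $**=1$ in dimension~7. This gives $*(dr\wedge\alpha)=\rho^{\frac23}q^{-1}\beta$ and $*(dr\wedge\beta)=-\rho^{\frac23}q^{-1}\alpha$, and $*\omega_{23}=q^{-3}h_1^2\,dr\wedge\omega_1$ from $*\omega_1=q^3h_1^{-2}dr\wedge\omega_{23}$, and similarly for the others. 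Hence
\begin{align*}
*d\phi=-\tfrac12\rho^{\frac23}q^{-1}S\,\beta-\rho^{\frac23}q^{-1}f_\beta'\,\alpha-2f_\beta q^{-3}\sum_jh_j^2\,dr\wedge\omega_j.
\end{align*}
Next apply $d$. We have $d\alpha=0$ (it is $\tfrac12$ of $d\omega_1$ applied once more, i.e.\ $d(d\omega_1)=0$), $d\beta=-2(\omega_{12}+\omega_{23}+\omega_{13})$, and $d(dr\wedge\omega_j)=-\tfrac12dr\wedge\alpha$. This produces $dr\wedge\beta$, $dr\wedge\alpha$ and $\omega_{kl}$ terms. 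Applying $*$ once more, the $\omega_{kl}$ terms (coefficient $\rho^{\frac23}q^{-1}S$) become $\rho^{\frac23}q^{-1}S\,q^{-3}h_j^2\,dr\wedge\omega_j=\rho^{-\frac23}$-type contributions. Here one must be careful: the correct conversion is $*\omega_{23}=q^{-3}h_1^2 \rho^{-\frac43}q^{2}\cdots$, to be recomputed from $*(dr\wedge\omega_1)$ and the norm of $dr$, giving the coefficient $\rho^{-\frac23}q^{-2}h_j^2S$. The $dr\wedge\beta$ term becomes $\tfrac12(\rho^{\frac23}q^{-1}S)'\rho^{\frac23}q^{-1}\alpha$. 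The $dr\wedge\alpha$ terms collect $-(\rho^{\frac23}q^{-1}f_\beta')'$ together with $f_\beta q^{-3}\sum h_j^2$, and become the $\beta$ coefficient. Matching all of these with the stated formula completes the computation.

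The main obstacle is bookkeeping of the Hodge stars on $4$-forms $\omega_{kl}$ versus $dr\wedge\omega_j$. Here the factors $\rho^{\pm\frac43}$, $q$ and $h_j^{-2}$ must be inverted consistently; note that~\eqref{Hodge stars for induced metric - reducible} lists both $*\omega_j$ and $*(dr\wedge\omega_j)$, so the two inverse stars differ by $\rho^{\frac43}q^{-2}$. I would verify each star against the orthonormal coframe $v^i$ from Lemma~\ref{lem_ind_met_red}, and check the final answer against the closed-case specialisation $h_1=h_2=h_3$, which should reproduce the structure of Proposition~\ref{G2 Laplacian - irreducible case}.
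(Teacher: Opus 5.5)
Your plan is the paper's proof: compute $d\delta\phi$ from Lemma~\ref{lem_calcs_red} and $d\omega_j=\frac12\alpha$, compute $\delta d\phi=*d*d\phi$, and add. Your $d\delta\phi$ is correct. So are $*(dr\wedge\alpha)=\rho^{\frac23}q^{-1}\beta$ and $*(dr\wedge\beta)=-\rho^{\frac23}q^{-1}\alpha$, and the $dr\wedge\omega_j$ and $\alpha$ contributions you obtain for $\delta d\phi$.

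One step is wrong as written, and you only half-repair it. Inverting $*\omega_1=q^3h_1^{-2}\,dr\wedge\omega_{23}$ gives $*(dr\wedge\omega_{23})=q^{-3}h_1^2\,\omega_1$, not $*\omega_{23}$. The star of the $4$-form comes from inverting $*(dr\wedge\omega_1)=\rho^{\frac43}qh_1^{-2}\omega_{23}$, which gives $*\omega_{23}=\rho^{-\frac43}q^{-1}h_1^2\,dr\wedge\omega_1$, and cyclically. You catch this when applying $*$ to the $\omega_{kl}$-terms coming from $d\beta$. However, the wrong factor $q^{-3}$ is still in your displayed $*d\phi$, in the image of $-2f_\beta(\omega_{12}+\omega_{23}+\omega_{13})$. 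It also persists in your claim that the $dr\wedge\alpha$ terms collect $f_\beta q^{-3}\sum_j h_j^2$. Carried through, this puts $\rho^{\frac23}q^{-4}f_\beta\sum_j h_j^2$ into the $\beta$-coefficient instead of $\rho^{-\frac23}q^{-2}f_\beta\sum_j h_j^2$. The two agree only where $q^3=\rho^2$, so a spot check at $r=0$ under the normalisation $\rho_0^2=q^3(0)$ would not even detect the error, but the final matching fails as functions of $r$. With the correct star,
\[
*d\phi=-\tfrac12\rho^{\frac23}q^{-1}S\,\beta-\rho^{\frac23}q^{-1}f_\beta'\,\alpha-2\rho^{-\frac43}q^{-1}f_\beta\textstyle\sum_j h_j^2\,dr\wedge\omega_j .
\]
The exterior derivative of the last term is $\rho^{-\frac43}q^{-1}f_\beta\sum_j h_j^2\,dr\wedge\alpha$. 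After the final $*$ this yields exactly the stated $\beta$-coefficient, and the rest of your computation stands.

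Your proposed sanity check at $h_1=h_2=h_3$ (which has nothing to do with closedness) will not reproduce Proposition~\ref{G2 Laplacian - irreducible case} literally. There $d\omega=3\alpha$ and $d\beta=-2\omega^2$, whereas here $d(\omega_1+\omega_2+\omega_3)=\frac32\alpha$ and $d\beta=-2(\omega_{12}+\omega_{23}+\omega_{13})$. The two coframes differ by a factor of $2$ and must be rescaled before comparing.
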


\begin{proof}
We exploit Lemma~\ref{lem_calcs_red} and the first equality in~\eqref{exterior derivative for reducible case} to find
\begin{align*}
d\delta\phi &=-(q^{-3}h_1^2((\rho^\frac{4}{3}qh_1^{-1})'-2\rho^{-\frac{2}{3}}qf_\alpha))'\,dr\wedge\omega_1 
\\
&\hphantom{=}~-(q^{-3}h_2^2((\rho^\frac{4}{3}qh_2^{-1})'-2\rho^{-\frac{2}{3}}qf_\alpha))'\,dr\wedge\omega_2
- (q^{-3}h_3^2((\rho^\frac{4}{3}qh_3^{-1})'-2\rho^{-\frac{2}{3}}qf_\alpha))'\,dr\wedge\omega_3
\\
&\hphantom{=}~-\frac{1}{2}q^{-3}(h_1^2(\rho^\frac{4}{3}qh_1^{-1})' + h_2^2(\rho^\frac{4}{3}qh_2^{-1})' + h_3^2(\rho^\frac{4}{3}qh_3^{-1})' - 2\rho^{-\frac{2}{3}}qf_\alpha(h_1^2 + h_2^2 + h_3^2))\,\alpha.
\end{align*}
Similarly, with~\eqref{Hodge stars for induced metric - reducible} in mind,
\begin{align*}
        \delta d\phi 
        &= *\,d*
        \Big(-\frac{1}{2}(h_1+h_2+h_3-2f_\alpha')\,dr\wedge\alpha + f_\beta'\,dr\wedge\beta -2f_\beta(\omega_{12}+\omega_{23}+\omega_{13})\Big)
        \\
        &=*\,d\Big(-\frac12\rho^\frac{2}{3}q^{-1}(h_1+h_2+h_3-2f_\alpha')\beta - \rho^\frac{2}{3}q^{-1}f_\beta'\,\alpha - 2\rho^{-\frac{4}{3}}q^{-1}f_\beta\,dr\wedge(h_1^2\omega_1+h_2^2\omega_2+h_3^2\omega_3)\Big)
        \\
        &=*\,(\rho^\frac{2}{3}q^{-1}(h_1+h_2+h_3-2f_\alpha')(\omega_{12}+\omega_{23}+\omega_{13}))
        \\
        &\hphantom{=}
        -*\,\Big(((\rho^\frac{2}{3}q^{-1}f_\beta')' - \rho^{-\frac{4}{3}}q^{-1}f_\beta(h_1^2+h_2^2+h_3^2))\,dr\wedge\alpha
+\frac12(\rho^\frac{2}{3}q^{-1}(h_1+h_2+h_3-2f_\alpha'))'\,dr\wedge\beta\Big)
        \\
        &=\rho^{-\frac{2}{3}}q^{-2}(h_1+h_2+h_3-2f_\alpha')(h_1^2\:dr\wedge\omega_1+h_2^2\:dr\wedge\omega_2+h_3^2\:dr\wedge\omega_3)
        \\
        &\hphantom{=}~+\frac{1}{2}\rho^\frac{2}{3}q^{-1}(\rho^\frac{2}{3}q^{-1}(h_1+h_2+h_3-2f_\alpha'))'\alpha
- (\rho^\frac{2}{3}q^{-1}(\rho^{\frac{2}{3}}q^{-1}f_\beta')' - \rho^{-\frac{2}{3}}q^{-2}f_\beta(h_1^2+h_2^2+h_3^2))\,\beta.
\end{align*}
The assertion of the lemma follows by combining these calculations.
\end{proof}

\subsubsection{Existence}\label{subsubsec_exist_red}

The 3-form on the right-hand side of the Poisson equation~\eqref{Poisson_equation} appears as
\begin{align*}
\eta=\eta_1\,dr\wedge\omega_1+\eta_2\,dr\wedge\omega_2+\eta_3\,dr\wedge\omega_3 +\eta_\alpha\,\alpha+\eta_\beta\,\beta,
\end{align*}
where $\eta_1$, $\eta_2$, $\eta_3$, $\eta_\alpha$ and $\eta_\beta$ are functions on~$(-1,1)$.
Proposition~\ref{G2 Laplacian - reducible case} reduces~\eqref{Poisson_equation} to
\begin{align}\label{sys_raw_Poisson_red}
\begin{split}
\rho^{-\frac{2}{3}}q^{-2}h_1^2(h_1+h_2+h_3 - 2f_\alpha')-(q^{-3}h_1^2((\rho^\frac{4}{3}qh_1^{-1})'-2\rho^{-\frac{2}{3}}q f_\alpha))'&=\eta_1,
\\
\rho^{-\frac{2}{3}}q^{-2}h_2^2(h_1+h_2+h_3 - 2f_\alpha')-(q^{-3}h_2^2((\rho^\frac{4}{3}qh_2^{-1})'-2\rho^{-\frac{2}{3}}q f_\alpha))'&=\eta_2,
\\
\rho^{-\frac{2}{3}}q^{-2}h_3^2(h_1+h_2+h_3 - 2f_\alpha')-(q^{-3}h_3^2((\rho^\frac{4}{3}qh_3^{-1})'-2\rho^{-\frac{2}{3}}q f_\alpha))'&=\eta_3,
\\
-\frac{1}{2}q^{-3}(h_1^2(\rho^\frac{4}{3}qh_1^{-1})' + h_2^2(\rho^\frac{4}{3}qh_2^{-1})' + h_3^2(\rho^\frac{4}{3}qh_3^{-1})')+\rho^{-\frac{2}{3}}q^{-2}f_\alpha(h_1^2 + h_2^2 + h_3^2)
\\
\hphantom{=}~+\frac{1}{2}\rho^{\frac{2}{3}}q^{-1}(\rho^\frac{2}{3}q^{-1}(h_1+h_2+h_3-2f_\alpha'))'&=\eta_\alpha,
\\
-\rho^\frac{2}{3}q^{-1}((\rho^{\frac{2}{3}}q^{-1}f_\beta')' - \rho^{-\frac{4}{3}}q^{-1}f_\beta(h_1^2+h_2^2+h_3^2))&=\eta_\beta.
\end{split}
\end{align}
Rearranging the terms in these equations yields
\begin{align*}
-q''q^2+ h_1''h_2h_3 &= \rho^{-\frac{4}{3}}q^2h_2h_3\eta_1 - h_1G_{(i)} - G_{(ii)} - h_1'h_2h_3G_{(iii)} - h_1'G_{(iv)},
\\
    -q''q^2+ h_1h_2''h_3 &= \rho^{-\frac{4}{3}}q^2h_1h_3\eta_2 - h_2G_{(i)} - G_{(ii)} - h_1h_2'h_3G_{(iii)} - h_2'G_{(iv)},
    \\
    -q''q^2+ h_1h_2h_3'' &= \rho^{-\frac{4}{3}}q^2h_1h_2\eta_3 - h_3G_{(i)} - G_{(ii)} - h_1h_2h_3'G_{(iii)} - h_3'G_{(iv)}
\\
f_\alpha'' &= - \rho^{-\frac{4}{3}}q^2\eta_\alpha+G_\alpha,
\\
f_\beta'' &= - \rho^{-\frac{4}{3}}q^2\eta_\beta+G_\beta.
\end{align*}
Here, $G_{(i)}$, $G_{(ii)}$, $G_{(iii)}$, $G_{(iv)}$, $G_\alpha$ and $G_\beta$ are algebraic functions of $h_1$, $h_2$, $h_3$, $h_1'$, $h_2'$, $h_3'$, $f_\alpha$, $f_\beta$, $f_\alpha'$ and $f_\beta'$ which are symmetric in $h_1$, $h_2$ and $h_3$ as well as in $h_1'$, $h_2'$ and $h_3'$. We add the first three lines and observe that
\begin{align*}
q''=((h_1h_2h_3)^{\frac{1}{3}})''=\Big(\frac{1}{3}(h_1h_2h_3)^{-\frac{2}{3}}(h_1h_2h_3)'\Big)' = -\frac{2}{9}q^{-5}(h_1h_2h_3)'+\frac{1}{3}q^{-2}(h_1h_2h_3)''
\end{align*}
to obtain
\begin{align*}
    -2(h_1'h_2h_3 + h_1h_2'h_3 + h_1h_2h_3') = \rho^{-\frac{4}{3}}q^2(h_2h_3\eta_1 + h_1h_3\eta_2 + h_1h_2\eta_3) + G_*.
\end{align*}
Again, $G_*$ is an algebraic function of $h_1$, $h_2$, $h_3$, $h_1'$, $h_2'$, $h_3'$, $f_\alpha$, $f_\beta$, $f_\alpha'$ and $f_\beta'$ which is symmetric in $h_1$, $h_2$ and $h_3$ as well as in their derivatives. We conclude that system~\eqref{sys_raw_Poisson_red} is equivalent to the system
\begin{align}\label{Poisson - reducible case}
\begin{split}
-q''q^2+ h_1''h_2h_3 &= \rho^{-\frac{4}{3}}q^2h_2h_3\eta_1 - h_1G_{(i)} - G_{(ii)} - h_1'h_2h_3G_{(iii)} - h_1'G_{(iv)},
\\
-q''q^2+ h_1h_2''h_3 &= \rho^{-\frac{4}{3}}q^2h_2h_3\eta_2 - h_2G_{(i)} - G_{(ii)} - h_1h_2'h_3G_{(iii)} - h_2'G_{(iv)},
\\
-2(h_1'h_2'h_3+h_1'h_2h_3'+h_1h_2'h_3') &= \rho^{-\frac{4}{3}}q^2(h_2h_3\eta_1 +h_1h_3\eta_2 + h_1h_2\eta_3) + G_*,
\\
 f_\alpha'' &= - \rho^{-\frac{4}{3}}q^2\eta_\alpha + G_\alpha,
\\
f_\beta'' &= - \rho^{-\frac{4}{3}}q^2\eta_\beta+G_\beta.
\end{split}
\end{align}
The vector of the leading terms appears as
\begin{align}\label{sys_matr_red}
\frac13M(h_1,h_2,h_3)\left(
\begin{matrix}
h_1'' \\
h_2'' \\
h_3'' \\
f_\alpha'' \\
f_\beta''
\end{matrix}\right),\qquad
M(x,y,z)=\left(
\begin{matrix}
2yz & -xz & -xy & \hphantom{x}0\hphantom{y} & \hphantom{x}0\hphantom{y} \\
-yz & 2xz & -xy & \hphantom{x}0\hphantom{y} & \hphantom{x}0\hphantom{y} \\
0 & 0 & 0 & \hphantom{x}0\hphantom{y} & \hphantom{x}0\hphantom{y} \\
0 & 0 & 0 & \hphantom{x}1\hphantom{y} & \hphantom{x}0\hphantom{y} \\
0 & 0 & 0 & \hphantom{x}0\hphantom{y} & \hphantom{x}1\hphantom{y} \\
\end{matrix}\right).
\end{align}

Let us state the Cauchy conditions~\eqref{initial_conditions} in terms of $h_1$, $h_2$, $h_3$, $f_\alpha$ and~$f_\beta$. The forms $\psi$, $\psi^+$ and $\psi^-$ satisfy
\begin{align*}
\psi=X\,dr\wedge\omega_1+Y\,dr\wedge\omega_2+Z\,dr\wedge\omega_3&+A\alpha+B\beta,
\\ 
\psi^+&=A'\alpha+B'\beta,
\qquad \psi^-
=X'\omega_1+Y'\omega_2+Z'\omega_3,
\end{align*}
where $X$, $X'$, $Y$, $Y'$, $Z$, $Z'$, $A$, $A'$, $B$ and $B'$ are real numbers. The positivity of $\psi$ requires that $XYZ\ne0$ and $A^2+B^2>0$. 
Since~$\nu=dr$ on $\Sigma$,
\begin{align}\label{Assumption on the metric - reducible case}
\rho_0^2 = q^3(0) = XYZ,\qquad \rho_0^2=\sqrt{A^2+B^2}.
\end{align}

\begin{lemma}\label{suff_con - reducible case}
    Conditions~(\ref{suff_con}) on the data $\eta$, $\psi$, $\psi^+$ and $\psi^-$ are equivalent to
    \begin{align*}
        -4(AA'+BB') &= \rho^2_0(X'X^{-1} + Y'Y^{-1} +Z'Z^{-1}),
        \\
        4\rho^{-2}_0(A\eta_\alpha(0)+B\eta_\beta(0)) &= X^{-1}\eta_1(0) + Y^{-1}\eta_2(0) + Z^{-1}\eta_3(0)
        \\ 
        &\hphantom{=}~+ 4\rho^{-4}_0B^2(X^2+Y^2+Z^2) + 4\rho^{-2}_0(A'^2+B'^2)
        \\
        &\hphantom{=}~+4\rho^{-2}_0A'(X+Y+Z) + 4A\rho^{-2}_0(X'+Y'+Z')
        \\
        &\hphantom{=}~-(X'^2X^{-2}+Y'^2Y^{-2}+Z'^2Z^{-2}).
    \end{align*}
\end{lemma}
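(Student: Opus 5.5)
The plan follows the proof of Lemma~\ref{suff_con - irreducible case}: evaluate every inner product and norm in~\eqref{suff_con} pointwise at $r=0$. By $G$-invariance all integrands are constant on $\Sigma$, so the integrals reduce to multiplication by $\vol(\Sigma)$, and the $L^2$-norms become pointwise norms. The first task is to tabulate the norms of the invariant forms for the metric $g_0^\Sigma=i^*g$. Lemma~\ref{lem_ind_met_red} gives $g_0^\Sigma=\rho_0^{2/3}q_0^{-1}(Xg_1+Yg_2+Zg_3)$. Using~\eqref{Assumption on the metric - reducible case}, which encodes $\|dr\|=1$ at $r=0$, I expect
\begin{align*}
\|\omega_1\|^2=\rho_0^{-4/3}q_0^2X^{-2},\qquad \|\alpha\|^2=\|\beta\|^2=4\rho_0^{-2},\qquad \langle\omega_j,\omega_k\rangle=0\ (j\neq k),
\end{align*}
with the analogous formulas for $\omega_2$ and $\omega_3$. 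I would also check that $\|\omega_{jk}\|^2$ is the product of the corresponding $\|\omega_j\|^2$, and that $\omega_{12},\omega_{13},\omega_{23}$ are mutually orthogonal.

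The first identity then comes from the first line of~\eqref{suff_con}. Here $\psi_{\scparal}=A\alpha+B\beta$ and $\psi_{\scperp}=X\omega_1+Y\omega_2+Z\omega_3$, because $\nu=dr$. This gives
\begin{align*}
\langle\psi^+,\psi_{\scparal}\rangle=4\rho_0^{-2}(AA'+BB'),\qquad \langle\psi^-,\psi_{\scperp}\rangle=\rho_0^{-4/3}q_0^2\,(X'X^{-1}+Y'Y^{-1}+Z'Z^{-1}).
\end{align*}
Multiplying through and using $q_0^3=\rho_0^2$ should reduce this to the stated relation.

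For the second identity I compute the remaining terms. Restricting Lemma~\ref{lem_calcs_red} to $\Sigma$, or computing directly from~\eqref{exterior derivative for reducible case} and~\eqref{SigmaHodge - reducible}, gives the following ingredients:
\begin{itemize}
\item $d^\Sigma\psi_{\scparal}=-2B(\omega_{12}+\omega_{23}+\omega_{13})$;
\item $\delta^\Sigma\psi_{\scparal}=-*_0d*_0(A\alpha+B\beta)=-*_0d(A\beta-B\alpha)$, which is a combination $\sum_j c_jA\,\omega_j$ with $c_j$ proportional to $X^2,Y^2,Z^2$;
\item $d^\Sigma\psi_{\scperp}=\tfrac12(X+Y+Z)\alpha$;
\item $\delta^\Sigma\psi_{\scperp}=0$, because invariant $1$-forms vanish.
\end{itemize}
The left-hand side of the second identity is $\langle\eta_{\scparal},\psi_{\scparal}\rangle-\langle\eta_{\scperp},\psi_{\scperp}\rangle=4\rho_0^{-2}(A\eta_\alpha+B\eta_\beta)-\rho_0^{-4/3}q_0^2\sum\eta_jX_j^{-1}$.

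Next I expand $\|\psi^++d^\Sigma\psi_{\scperp}\|^2$ and $\|\psi^--\delta^\Sigma\psi_{\scparal}\|^2$. In the second of these, the squared term $\|\delta^\Sigma\psi_{\scparal}\|^2$ cancels against the one coming from $\|\delta^\Sigma\psi_{\scparal}\|^2$ on the right-hand side of~\eqref{suff_con}. The term $\|d^\Sigma\psi_{\scperp}\|^2$ cancels against the square part of $\|\psi^++d^\Sigma\psi_{\scperp}\|^2$. What survives is:
\begin{itemize}
\item the $B^2$ term from $d^\Sigma\psi_{\scparal}$;
\item $4\rho_0^{-2}(A'^2+B'^2)$;
\item the cross term $4\rho_0^{-2}A'(X+Y+Z)$;
\item the cross term $2\langle\psi^-,\delta^\Sigma\psi_{\scparal}\rangle$, linear in $A$ and the $X',Y',Z'$;
\item $-\|\psi^-\|^2$, which is proportional to $\sum X'^2X^{-2}$.
\end{itemize}
Finally I multiply everything by $\rho_0^{4/3}q_0^{-2}$, which equals $\rho_0^{2/3}q_0$ after using $q_0^3=\rho_0^2$, and move the $\eta_j$ terms to the right to reach the stated form.

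The main obstacle is bookkeeping of the normalisations, in particular getting the coefficient of $\delta^\Sigma\psi_{\scparal}$ and the factors of $\rho_0$ and $q_0$ consistent. Equation~\eqref{Assumption on the metric - reducible case} as printed looks inconsistent: it states both $\rho_0^2=XYZ$ and $\rho_0^2=\sqrt{A^2+B^2}$. I would use $\rho_0=\sqrt{A^2+B^2}$ together with $\rho_0^{-4/3}q_0^2=1$, which is the condition $\|dr\|=1$, and hence $q_0=\rho_0^{2/3}$. Under this normalisation all the powers simplify: $\|\omega_j\|^2=X_j^{-2}$, and the cross term should become $4A\rho_0^{-2}(X'+Y'+Z')$. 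I will recompute $\delta^\Sigma\psi_{\scparal}$ carefully via $*_0\beta=-\alpha$, $d\alpha=0$, $d\beta=-2\sum\omega_{jk}$ and $*_0\omega_{jk}$, since this is where sign and factor errors are most likely. After that I will also double-check the $B^2$ coefficient $4\rho_0^{-4}(X^2+Y^2+Z^2)$ against $\|\omega_{jk}\|^2=X_j^2X_k^2\cdots$ rescaled.
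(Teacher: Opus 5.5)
Your proposal is correct and follows the paper's proof essentially step for step. Both evaluate everything pointwise at $r=0$, use $\|\omega_j\|^2=X_j^{-2}$, $\|\alpha\|^2=\|\beta\|^2=4\rho_0^{-2}$ and $\delta^\Sigma\psi_{\scparal}=2A\rho_0^{-2}(X^2\omega_1+Y^2\omega_2+Z^2\omega_3)$, and rely on the same cancellations; your normalisation $\rho_0=\sqrt{A^2+B^2}$, $q_0=\rho_0^{2/3}$ (so $XYZ=\rho_0^2$) is exactly the one the paper uses, and the second equality in~\eqref{Assumption on the metric - reducible case} is indeed a typo.
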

\begin{proof}
From~\eqref{SigmaHodge - reducible}, we obtain 
\begin{align*}
    \|\omega_1\|^2 &= \rho_0^{-\frac{4}{3}}q^2(0)X^{-2}=X^{-2}, & \|\omega_2\|^2 &= \rho_0^{-\frac{4}{3}}q^2(0)Y^{-2}=Y^{-2}, & \|\omega_3\|^2 &= \rho_0^{-\frac{4}{3}}q^2(0)Z^{-2}=Z^{-2},\\
    \|\omega_{12}\|^2 &= (XY)^{-2}, & \|\omega_{23}\|^2 &= (YZ)^{-2}, &
    \|\omega_{13}\|^2 &= (XZ)^{-2},\\
    \|\alpha\|^2 &=4\rho_0^{-2}, & \|\beta\|^2 &=4\rho_0^{-2},
\end{align*}
where $\|\cdot\|=\langle\cdot,\cdot\rangle^{\frac12}$. The expressions in the first condition in~\eqref{suff_con} become
\begin{align*}
    \langle\psi^+,\psi_{\scparal}\rangle &= \langle A'\alpha+B'\beta, A\alpha+B\beta\rangle= 4\rho_0^{-2}(AA'+BB'),\\
    \langle\psi_-,\psi_{\scperp}\rangle &= \langle X'\omega_1+Y'\omega_2+Z'\omega_3, X\omega_1+Y\omega_2+Z\omega_3\rangle=X'X^{-1} + Y'Y^{-1} +Z'Z^{-1}.
\end{align*}
Integration over $\Sigma$ amounts to multiplication by the volume of~$\Sigma$. Therefore, the second formula in~\eqref{suff_con} holds point-wise.
On the left-hand side of this formula,
\begin{align*}
    \langle\eta,\psi\rangle - 2\langle\eta_{\scperp},\psi_{\scperp}\rangle &= \langle\eta_{\scparal},\psi_{\scparal}\rangle - \langle\eta_{\scperp},\psi_{\scperp}\rangle
    \\
    &= \langle \eta_\alpha(0)\alpha+\eta_\beta(0)\beta, A\alpha + B\beta\rangle
    \\
    &\hphantom{=}~- \langle\eta_1(0)\,\omega_1 + \eta_2(0)\,\omega_2 + \eta_3(0)\,\omega_3, X\omega_1 + Y\omega_2 + Z\omega_3\rangle
    \\
    &=4\rho_0^{-2}(A\eta_\alpha(0) +B\eta_\beta(0) ) - X^{-1}\eta_1(0) - Y^{-1}\eta_2 (0) - Z^{-1}\eta_3 (0).
\end{align*}
Analogously, using~\eqref{exterior derivative for reducible case} and~\eqref{SigmaHodge - reducible} yields
\begin{align*}
    \|d^\Sigma\psi_{\scparal}\|^2 + \|\delta^\Sigma\psi_{\scparal}\|^2 &= \|-2B(\omega_{12} + \omega_{23} +\omega_{13})\|^2+\|2A\rho_0^{-\frac{2}{3}}q^{-2}(0)(X^2\omega_1+Y^2\omega_2+Z^2\omega_3)\|^2
    \\
    &=4\rho_0^{-2}(X^2+Y^2+Z^2),
    \\
    \|d^\Sigma\psi_{\scperp}\|^2 + \|\delta^\Sigma\psi_{\scperp}\|^2 &= \frac14\|(X+Y+Z)\alpha\|^2 = \rho_0^{-2}(X+Y+Z)^2,\\
    \|\psi^++d^\Sigma\psi_{\scperp}\|^2 &= \Big\|\Big(A'+\frac{1}{2}(X+Y+Z)\Big)\alpha+B'\beta\Big\|^2
    \\
    &= 4\rho_0^{-2}(A'^2+B'^2) + \rho_0^{-2}(X+Y+Z)^2 +4\rho_0^{-2}A'(X+Y+Z),
    \\
    \|\psi^--\delta^\Sigma\psi_{\scparal}\|^2 &= \|(X'-2\rho_0^{-2}AX^2)\,\omega_1+(Y'-2\rho_0^{-2}AY^2)\,\omega_2+(Z'-2\rho_0^{-2}AZ^2)\,\omega_3\|^2 \\
    &=(X'^2X^{-2}+Y'^2Y^{-2}+Z'^2Z^{-2}) + 4\rho_0^{-4}A^2(X^2+Y^2+Z^2)
    \\
    &\hphantom{=}~-4\rho_0^{-2}A(X'+Y'+Z').
\end{align*}
Thus, up to the volume of $\Sigma$, the right-hand side of the second formula in~\eqref{suff_con} equals
\begin{align*}
    4\rho_0^{-4}B^2&(X^2+Y^2+Z^2) + 4\rho_0^{-2}(A'^2+B'^2) + 4\rho_0^{-2}A'(X+Y+Z)
    \\
    &- (X'^2X^{-2}+Y'^2Y^{-2}+Z'^2Z^{-2}) + 4\rho_0^{-2}A(X'+Y'+Z'). 
\end{align*}
\end{proof}

In light of Lemma~\ref{lem_calcs_red}, the first two formulas in~\eqref{initial_conditions} become
\begin{align}\label{ini_fh_red}
\begin{split}
f_\alpha(0)=A,\quad f_\beta(0)=B,\quad h_1(0)&=X,\quad h_2(0)=Y,\quad h_3(0)=Z, 
\\
f_\alpha'(0)&=\frac12(X+Y+Z+2A'),\quad 
f_\beta'(0)=B'.
\end{split}
\end{align}
The third is equivalent to
\begin{align*}
    -\Big(\frac{4}{3}\rho^{\frac{1}{3}}\rho'q^{-2}h_1+\rho^{\frac{4}{3}}q^{-3}q'h_1-\rho^{\frac{4}{3}}q^{-2}h_1'-2\rho^{-\frac{2}{3}}q^{-2}h_1^2f_\alpha\Big)(0) = X',\\
    -\Big(\frac{4}{3}\rho^{\frac{1}{3}}\rho'q^{-2}h_2+\rho^{\frac{4}{3}}q^{-3}q'h_2-\rho^{\frac{4}{3}}q^{-2}h_2'-2\rho^{-\frac{2}{3}}q^{-2}h_2^2f_\alpha\Big)(0) = Y',\\
    -\Big(\frac{4}{3}\rho^{\frac{1}{3}}\rho'q^{-2}h_3+\rho^{\frac{4}{3}}q^{-3}q'h_3-\rho^{\frac{4}{3}}q^{-2}h_3'-2\rho^{-\frac{2}{3}}q^{-2}h_3^2f_\alpha\Big)(0) = Z'.
\end{align*}
Using~\eqref{Assumption on the metric - reducible case} and~\eqref{ini_fh_red}, we can transform this into
\begin{align*}
    \Big(\frac{4}{3}\rho\rho'+\rho^{\frac{4}{3}}q'-h_1'YZ\Big)(0)-2XA &= -X'YZ, \\
    \Big(\frac{4}{3}\rho\rho'+\rho^{\frac{4}{3}}q'-h_2'XZ\Big)(0)-2YA &= -XY'Z,\\
    \Big(\frac{4}{3}\rho\rho'+\rho^{\frac{4}{3}}q'-h_3'XY\Big)(0)-2ZA &= -XYZ'.
\end{align*}
Next, we add the three formulas together and observe that
\begin{align*}
q'=\frac{1}{3}q^{-2}(h_1h_2h_3)', \qquad
\rho'(0)=\frac{1}{2}\rho_0^{-1}(A(X+Y+Z+2A')+2BB'),
\end{align*}
to obtain
\begin{align*}
    (4\rho\rho'+(h_1h_2h_3)'-h_1'YZ-Xh_2'Z-XYh_3')(0)-2(X+Y+Z)A &= -X'YZ - XY'Z - XYZ'
\end{align*}
or, equivalently,
\begin{equation*}
    -4(AA'+BB') = X'YZ + XY'Z + XYZ'.
\end{equation*}
By Lemma~\ref{suff_con - reducible case}, this identity follows from our hypotheses. Thus, the Cauchy conditions~\eqref{initial_conditions} amount to~\eqref{ini_fh_red} and
\begin{equation}\label{ini_fh_red_cont}
\begin{split}
    h_1'(0)&= \frac{4}{3}\rho_0^{-1}\rho'(0)X+\rho_0^{-\frac{2}{3}}q'(0)X-2\rho_0^{-2}X^2A + X',\\
    h_2'(0)&= \frac{4}{3}\rho_0^{-1}\rho'(0)Y+\rho_0^{-\frac{2}{3}}q'(0)Y-2\rho_0^{-2}Y^2A + Y'.
\end{split}
\end{equation}

To prove the existence portion of Theorem~B, it suffices to solve system~\eqref{Poisson - reducible case} subject to~\eqref{ini_fh_red} and~\eqref{ini_fh_red_cont}. Because the matrix $M(h_1,h_2,h_3)$ in~\eqref{sys_matr_red} is degenerate, standard ODE results do not apply directly. As in Section~\ref{subsubsec_exist_irred}, we overcome this difficulty by using a variant of DeTurck's trick. Let us construct a $G_2$-structure
\begin{align*}
\hat\phi=\hat h_1\,dr\wedge\omega_1 +\hat h_2\,dr\wedge\omega_2 +\hat h_3\,dr\wedge\omega_3 +\hat f_\alpha\,\alpha+\hat f_\beta\,\beta
\end{align*}
in a neighbourhood of~$\Sigma$ such that
\begin{align}\label{DeTurck_red}
\begin{split}(\hat h_1\hat h_2\hat h_3)'(0)&=0,\quad (\hat h_1\hat h_2\hat h_3)''(0)\ne0,\\ \Delta_{\hat\phi}\hat\phi&=\Phi^*\eta,\qquad 
\hat\phi|_\Sigma=(\Phi^*\psi)|_\Sigma,\qquad 
i^*(\iota_{\hat{\mathbf n}} d\hat\phi)=\psi^+,\qquad (\hat\delta\hat\phi)_{\scparal}=\psi^-.
\end{split}
\end{align}
Here, $\Phi$ is the diffeomorphism between two neighbourhoods of~$\Sigma$ induced by $r\mapsto(\hat h_1\hat h_2\hat h_3)'(r)$, \linebreak $\hat{\mathbf n}=d\Phi^{-1}(\mathbf n)$, and $\hat\delta$ is the codifferential given by~$\hat\phi$.
The existence portion of Theorem~B will follow by setting $\phi=(\Phi^{-1})^*\hat\phi$.

The pullback $\Phi^*\eta$ satisfies 
\begin{align*}
\Phi^*\eta=(\hat h_1\hat h_2\hat h_3)''\hat \eta_1\,dr\wedge\omega_1 +(\hat h_1\hat h_2\hat h_3)''\hat \eta_2\,dr\wedge\omega_2 +(\hat h_1\hat h_2\hat h_3)''\hat \eta_3\,dr\wedge\omega_3 +\hat\eta_\alpha\,\alpha+\hat\eta_\beta\,\beta
\end{align*}
with
\begin{align*}
\hat \eta_i=\eta_i\circ(\hat h_1\hat h_2\hat h_3)',\qquad 
\hat\eta_\alpha=\eta_\alpha\circ(\hat h_1\hat h_2\hat h_3)',\qquad
\hat\eta_\beta=\eta_\beta\circ(\hat h_1\hat h_2\hat h_3)',\qquad  i=1,2,3.
\end{align*}
Substituting $\hat{\phi}$ for $\phi$ and $\Phi^*\eta$ for $\eta$ in~\eqref{Poisson - reducible case}, we find
\begin{equation}\label{transformed Poisson}
\begin{split}
-\hat{q}''\hat{q}^2+ \hat{h}_1''\hat{h}_2\hat{h}_3 &= \hat{\rho}^{-\frac{4}{3}}\hat{q}^2(\hat q^{3})''\hat{h}_2\hat{h}_3\hat{\eta}_1 - \hat{h}_1\hat{G}_{(i)} - \hat{G}_{(ii)} - \hat{h}_1'\hat{h}_2\hat{h}_3\hat{G}_{(iii)} - \hat{h}_1'\hat{G}_{(iv)},
\\
-\hat{q}''\hat{q}^2+ \hat{h}_1\hat{h}_2''\hat{h}_3 &= \hat{\rho}^{-\frac{4}{3}}\hat{q}^2(\hat q^{3})''\hat{h}_2\hat{h}_3\hat{\eta}_2 - \hat{h}_2\hat{G}_{(i)} - \hat{G}_{(ii)} - \hat{h}_1\hat{h}_2'\hat{h}_3\hat{G}_{(iii)} - \hat{h}_2'\hat{G}_{(iv)},
\\
-2(\hat{h}_1'\hat{h}_2'\hat{h}_3+\hat{h}_1'\hat{h}_2\hat{h}_3'+\hat{h}_1\hat{h}_2'\hat{h}_3') &= \hat{\rho}^{-\frac{4}{3}}\hat{q}^2(\hat q^{3})''(\hat{h}_2\hat{h}_3\hat{\eta}_1 + \hat{h}_1\hat{h}_3\hat{\eta}_2 + \hat{h}_1\hat{h}_2\hat{\eta}_3) + \hat{G}_*,
\\
 \hat{f}_\alpha'' &= - \hat{\rho}^{-\frac{4}{3}}\hat{q}^2\hat{\eta}_\alpha + \hat{G}_\alpha,
\\
\hat{f}_\beta'' &= - \hat{\rho}^{-\frac{4}{3}}\hat{q}^2\hat{\eta}_\beta+\hat{G}_\beta.
\end{split}
\end{equation}
Here, 
$$\hat q=(\hat h_1\hat h_2\hat h_3)^\frac13,\qquad \hat\rho=\sqrt{\hat f_\alpha^2+\hat f_\beta^2},$$
and $\hat G_{(i)}$, $\hat G_{(ii)}$, $\hat G_{(iii)}$, $\hat G_{(iv)}$, $\hat G_\alpha$, $\hat G_\beta$ and $\hat G_*$ are algebraic functions of $\hat h_1$, $\hat h_2$, $\hat h_3$, $\hat h_1'$, $\hat h_2'$, $\hat h_3'$, $\hat f_\alpha$, $\hat f_\beta$, $\hat f_\alpha'$ and $\hat f_\beta'$. Condition~\eqref{nondeg_assumtion} enables us to transform~\eqref{transformed Poisson} into a system with the vector of the leading terms equal to
\begin{align}\label{sys_DeTurck_matr_red}
\frac13N(\hat h_1,\hat h_2,\hat h_3)\left(
\begin{matrix}
\hat h_1'' \\
\hat h_2'' \\
\hat h_3'' \\
\hat f_\alpha'' \\
\hat f_\beta''
\end{matrix}\right),\qquad
N(x,y,z)=\left(
\begin{matrix}
2yz & -xz & -xy & \hphantom{x}0\hphantom{y} & \hphantom{x}0\hphantom{y} \\
-yz & 2xz & -xy & \hphantom{x}0\hphantom{y} & \hphantom{x}0\hphantom{y} \\
yz & xz & xy & \hphantom{x}0\hphantom{y} & \hphantom{x}0\hphantom{y} \\
0 & 0 & 0 & \hphantom{x}1\hphantom{y} & \hphantom{x}0\hphantom{y} \\
0 & 0 & 0 & \hphantom{x}0\hphantom{y} & \hphantom{x}1\hphantom{y} \\
\end{matrix}\right).
\end{align}
Since the matrix $N(\hat h_1,\hat h_2,\hat h_3)$ is non-degenerate, we are able to apply Picard--Lindel\"of theorem to this system. Before doing so, however, we need to supplement it with initial conditions.

Fix $\kappa>0$ and assume that
$$(\hat{h}_1\hat{h}_2\hat{h}_3)''(0)=\kappa>0.$$
The equations $\hat\phi|_\Sigma=(\Phi^*\psi)|_\Sigma$ and $i^*(\iota_{\hat{\mathbf n}} d\hat\phi)=\psi^+$ in~\eqref{DeTurck_red} hold if and only if
\begin{align}\label{ini_DeT_fh_red}
\begin{split}
\hat f_\alpha(0)&=A,\quad \hat f_\beta(0)=B, \quad \hat h_1(0)=\kappa\,X,\quad \hat h_2(0)=\kappa\,Y,\quad \hat h_3(0)=\kappa\,Z,
\\
\hat f_\alpha'(0)&=\frac12\kappa\,(X+Y+Z+2A'),\quad
\hat f_\beta'(0)=\kappa\,B'.
\end{split}
\end{align}
Observing that, by~\eqref{Assumption on the metric - reducible case},
\begin{equation*}
\hat{\rho}_0^2=(A^2+B^2) = XYZ = \kappa^3\hat{q}^3(0)
\end{equation*}
and computing as above, we re-state the condition $(\hat\delta\hat\phi)_{\scparal}=\psi^-$ as
\begin{equation}\label{eq_del_hat_cond}
\begin{split}
    \kappa^{-2}\hat{h}_1'(0)&= \frac{4}{3}\kappa^{-1}\hat{\rho}^{-1}_0\hat{\rho}'(0)X+\kappa^{-2}\hat{\rho}^{-\frac{2}{3}}_0\hat{q}'(0)X-2\hat{\rho}^{-2}_0X^2A + X',\\
    \kappa^{-2}\hat{h}_2'(0)&= \frac{4}{3}\kappa^{-1}\hat{\rho}^{-1}_0\hat{\rho}'(0)Y+\kappa^{-2}\hat{\rho}^{-\frac{2}{3}}_0\hat{q}'(0)Y-2\hat{\rho}^{-2}_0Y^2A + Y',\\
    \kappa^{-4}(\hat{h}_1\hat{h}_2\hat{h}_3)'(0) &= \kappa^{-2}(YZ\hat{h}_1' + XZ\hat{h}_2' + XY\hat{h}_3') (0) -X'YZ - XY'Z - XYZ' - 4(AA'+BB').
\end{split}
\end{equation}
Assume that 
\begin{equation*}
(YZh_1' + XZh_2' + XYh_3')(0) = 0.
\end{equation*}
This formula implies $(\hat{h}_1\hat{h}_2\hat{h}_3)'(0)=0$ and $\hat{q}'(0)=0$. Furthermore, when it holds, the third line in~\eqref{eq_del_hat_cond} follows from our hypotheses by Lemma~\ref{suff_con - reducible case}. Therefore, we can re-state conditions~\eqref{eq_del_hat_cond} as
\begin{equation}\label{DeTurck_ini_cont}
    \begin{split}
    \hat{h}_1'(0)&= \frac{4}{3}\kappa\hat{\rho}^{-1}_0\hat{\rho}'(0)X-2\kappa^2\hat{\rho}^{-2}_0X^2A + \kappa^2X',\\
    \hat{h}_2'(0)&= \frac{4}{3}\kappa\hat{\rho}^{-1}_0\hat{\rho}'(0)Y-2\kappa^2\hat{\rho}^{-2}_0Y^2A + \kappa^2Y',\qquad
    (YZh_1' + XZh_2' + XYh_3')(0)=0.
    \end{split}
\end{equation}
The Picard--Lindel\"of theorem yields functions $\hat h_1$, $\hat h_2$, $\hat h_3$, $\hat f_\alpha$ and $\hat f_\beta$ defined near $r=0$ and satisfying~\eqref{transformed Poisson}, \eqref{ini_DeT_fh_red} and~\eqref{DeTurck_ini_cont}. Our construction of $\hat\phi$, and hence the proof of the existence portion of Theorem~B, will be complete as soon as we validate our earlier assumption that $(\hat h_1\hat h_2\hat h_3)''(0)$ equals~$\kappa$.

\begin{lemma}\label{lem_big_beautiful_lemma}
If the functions $\hat h_1$, $\hat h_2$, $\hat h_3$, $\hat f_\alpha$ and $\hat f_\beta$ solve system~(\ref{transformed Poisson}) with initial conditions~(\ref{ini_DeT_fh_red}) and~(\ref{DeTurck_ini_cont}), then formulas~(\ref{suff_con}) imply
    \begin{equation*}
        (\hat{h}_1\hat{h}_2\hat{h}_3)''(0)=\kappa.
    \end{equation*}
\end{lemma}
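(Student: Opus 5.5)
The plan mirrors the spherical case in Section~\ref{subsubsec_exist_irred}: evaluate the third equation of~\eqref{transformed Poisson} at $r=0$, where $(\hat q^3)''(0)$ appears linearly on the right-hand side, and show that the integrability condition~\eqref{suff_con} forces $(\hat q^3)''(0)=\kappa$.

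\textbf{Step 1.} Solve the third line of~\eqref{transformed Poisson} for $(\hat h_1\hat h_2\hat h_3)''$. Its right-hand side has the coefficient
\[
\hat\rho^{-\frac43}\hat q^2\,(\hat h_2\hat h_3\hat\eta_1+\hat h_1\hat h_3\hat\eta_2+\hat h_1\hat h_2\hat\eta_3).
\]
At $r=0$, using $\hat h_i(0)=\kappa X,\kappa Y,\kappa Z$ and $\hat\eta_i(0)=\eta_i(0)$ (because $\Phi$ fixes $\Sigma$), this coefficient is a nonzero multiple of
\[
X^{-1}\eta_1(0)+Y^{-1}\eta_2(0)+Z^{-1}\eta_3(0).
\]
By~\eqref{nondeg_assumtion} this quantity equals $\langle\eta_{\scperp},\psi_{\scperp}\rangle\neq0$, as computed in the proof of Lemma~\ref{suff_con - reducible case}. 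Hence $(\hat q^3)''(0)$ is determined by first-order data at $0$:
\[
(\hat q^3)''(0)=\frac{-2\,(\hat h_1'\hat h_2'\hat h_3+\hat h_1'\hat h_2\hat h_3'+\hat h_1\hat h_2'\hat h_3')(0)-\hat G_*(0)}{\hat\rho_0^{-4/3}\hat q^2(0)\,\kappa^2(YZ\eta_1+XZ\eta_2+XY\eta_3)(0)}.
\]

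\textbf{Step 2.} Compute $\hat G_*$ explicitly at $r=0$. So far it has only been described as "algebraic". I would return to~\eqref{sys_raw_Poisson_red}, expand the derivatives, and combine the three $\omega_i$ equations exactly as in the derivation of the third line of~\eqref{Poisson - reducible case}. I would also use the $\alpha$-equation together with $\rho'$ and $\rho''$, in the same way as~\eqref{sys_r_Poisson_irred} was obtained in the spherical case. The goal is an expression involving only $\hat h_i,\hat h_i',\hat f,\hat f'$ and $\hat\eta_\alpha,\hat\eta_\beta$. The DeTurck terms coming from $(\hat q^3)'(0)=0$ and $\hat q'(0)=0$ simplify this considerably.

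\textbf{Step 3.} Substitute the initial data~\eqref{ini_DeT_fh_red} and~\eqref{DeTurck_ini_cont}. Here $\hat h_1'(0)$ and $\hat h_2'(0)$ are expressed through $X',Y'$, and $\hat h_3'(0)$ is fixed by $YZ\hat h_1'+XZ\hat h_2'+XY\hat h_3'=0$, hence expressible through $Z'$ via the first relation of Lemma~\ref{suff_con - reducible case}. Also use $\hat\rho'(0)=\hat\rho_0^{-1}(A\hat f_\alpha'(0)+B\hat f_\beta'(0))$. After factoring out the appropriate power of $\kappa$, the condition $(\hat q^3)''(0)=\kappa$ should reduce to a $\kappa$-independent polynomial identity in $X,Y,Z,X',Y',Z',A,B,A',B',\eta_\bullet(0)$. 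Homogeneity in $\kappa$ is a useful consistency check. The final task is to show that this identity is precisely the second identity of Lemma~\ref{suff_con - reducible case}, combined with the first identity and with $\rho_0^2=XYZ$ from~\eqref{Assumption on the metric - reducible case}.

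\textbf{Main obstacle.} The main difficulty is Step 2 and the matching in Step 3. The quadratic expression $\sum X'^2X^{-2}$ in Lemma~\ref{suff_con - reducible case} must emerge from the $\hat h_i'\hat h_j'$ products together with the part of $\hat G_*$ that is quadratic in $\hat h'$. This requires carefully tracking how the DeTurck gauge $\hat q'(0)=0$ redistributes the $X',Y',Z'$ terms. To keep this manageable, I would first verify the reduction with $B=B'=0$ and $X=Y=Z$, where it should collapse to the spherical computation, and only then carry out the general bookkeeping.
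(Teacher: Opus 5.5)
Your plan is correct and is essentially the paper's own argument. You evaluate the third line of \eqref{transformed Poisson} at $r=0$ and compute $\hat G_*(0)$ from \eqref{sys_raw_Poisson_red}, with $\hat\rho''$ eliminated through the $\alpha$- and $\beta$-equations. You then insert \eqref{ini_DeT_fh_red}, \eqref{DeTurck_ini_cont} and $\hat q'(0)=0$, match the outcome against both identities of Lemma~\ref{suff_con - reducible case} and $\rho_0^2=XYZ$, and finally divide by $YZ\eta_1(0)+XZ\eta_2(0)+XY\eta_3(0)\neq0$.

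The one device the paper uses that you leave implicit is the identity $0=\hat q^{-3}\big((\hat h_1\hat h_2\hat h_3)'(0)\big)^2$. It rewrites $\hat h_1'\hat h_2'\hat h_3+\hat h_1'\hat h_2\hat h_3'+\hat h_1\hat h_2'\hat h_3'$ at $r=0$ as $-\tfrac12\hat q^3\sum_i(\hat h_i^{-1}\hat h_i')^2$, and this is exactly where $\sum X'^2X^{-2}$ enters. No $\hat h'$-quadratic contribution from $\hat G_*$ survives at $r=0$, because $\hat q'(0)=0$.
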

\begin{proof}
The result will follow from the evaluation of the third equation in~\eqref{transformed Poisson} at $r=0$. Throughout the proof of the lemma, we omit the argument~0, thus writing, for example, $\hat\rho$ and $\hat q$ instead of $\hat\rho(0)$ and $\hat q(0)$. With this convention,
\begin{align*}
\hat{q}^3=\kappa^3\hat{\rho}^2=\kappa^3XYZ, \qquad \hat{q}'=0.
\end{align*}

Let us begin by finding $\hat{G}_*$. We multiply the first line in~\eqref{sys_raw_Poisson_red} by $YZ$, open the brackets, and exploit~\eqref{ini_DeT_fh_red} to obtain
\begin{align*}
-2\kappa XA' - \frac{4}{9}\kappa^{-1}(\hat{\rho}')^2 &- \frac{4}{3}\kappa^{-1}\hat{\rho}\hat{\rho}'' - \kappa^{-2}\hat{\rho}^{\frac{4}{3}}\hat{q}''
\\ 
&+ \kappa^{-2}\hat{h}_1''YZ-\frac{4}{3}\hat{\rho}^{-1}\hat{\rho}'AX + 2\hat{f}_\alpha'X + 4\kappa^{-1}\hat{h}_1'A= (\hat{h}_1\hat{h}_2\hat{h}_3)''\hat{\eta}_1YZ.
\end{align*}
Manipulating the other two lines similarly and adding the outcomes yields
\begin{align*}
-2\kappa(X&+Y+Z)A' - \frac{4}{3}\kappa^{-1}(\hat{\rho}')^2 - 4\kappa^{-1}\hat{\rho}\hat{\rho}'' - 3\kappa^{-2}\hat{\rho}^{\frac{4}{3}}\hat{q}'' \\
&+ \kappa^{-2}(\hat{h}_1''YZ+\hat{h}_2''XZ+\hat{h}_3''XY) - \frac{4}{3}\hat{\rho}^{-1}\hat{\rho}'A(X+Y+Z)
\\
&+ 2\hat{f}_\alpha'(X+Y+Z) + 4\kappa^{-1}A(\hat{h}_1'+\hat{h}_2'+\hat{h}_3') = (\hat{h}_1\hat{h}_2\hat{h}_3)''(\hat{\eta}_1YZ+\hat{\eta}_2XZ+\hat{\eta}_3XY).
\end{align*}
Observe that
\begin{equation*}
    \hat{q}''= \frac{1}{3}\hat{q}^{-2}(\hat{h}_1\hat{h}_2\hat{h}_3)''=\frac{1}{3}\hat{\rho}^{-\frac{4}{3}}(\hat{h}_1''YZ+\hat{h}_2''XZ+\hat{h}_3''XY + 2\kappa^{-2}(\hat{h}_1'\hat{h}_2'\hat{h}_3+\hat{h}_1'\hat{h}_2\hat{h}_3'+\hat{h}_1\hat{h}_2'\hat{h}_3')).
\end{equation*}
Consequently, the evaluation of the third equation in~\eqref{transformed Poisson} at $r=0$ reduces to the evaluation of
\begin{equation}\label{main}
\begin{split}
        -2\kappa A'(X&+Y+Z)  - \frac{4}{3}\kappa^{-1}(\hat{\rho}')^2 - 4\kappa^{-1}\hat{\rho}\hat{\rho}''
        \\
        &- 2\kappa^{-4}(\hat{h}_1'\hat{h}_2'\hat{h}_3+\hat{h}_1'\hat{h}_2\hat{h}_3'+\hat{h}_1\hat{h}_2'\hat{h}_3') -\frac{4}{3}\hat{\rho}^{-1}\hat{\rho}'A(X+Y+Z)
        \\
        &+ 2\hat{f}_\alpha'(X+Y+Z) + 4\kappa^{-1}A(\hat{h}_1'+\hat{h}_2'+\hat{h}_3') = (\hat{h}_1\hat{h}_2\hat{h}_3)''(\hat{\eta}_1YZ+\hat{\eta}_2XZ+\hat{\eta}_3XY).
\end{split}
\end{equation}
Also,
\begin{align*}
    \hat{G}_*(0)&=2\kappa^5(X+Y+Z)A' + \frac{4}{3}\kappa^3(\hat{\rho}')^2 + 4\kappa^3\hat{\rho}\hat{\rho}''
    \\
    &\hphantom{=}+\frac{4}{3}\kappa^4\hat{\rho}^{-1}\hat{\rho}'A(X+Y+Z) - 2\kappa^4\hat{f}_\alpha'(X+Y+Z) - 4\kappa^3A(\hat{h}_1'+\hat{h}_2'+\hat{h}_3').
\end{align*}

Our next step is to find $\hat G_\alpha$ and $\hat G_\beta$. From the fourth equation in~\eqref{sys_raw_Poisson_red},
\begin{align*}
-\frac{4}{3}\kappa\hat{\rho}^{-1}\hat{\rho}'(X+Y+Z) + (\hat{h}_1'+\hat{h}_2'+\hat{h}_3') &+ 2\kappa^2\hat{\rho}^{-2}A(X^2 + Y^2 + Z^2) 
\\
&- \frac{4}{3}\kappa\hat{\rho}^{-1}\hat{\rho}'A' + (\hat{h}_1'+\hat{h}_2'+\hat{h}_3') - 2\hat{f}_\alpha'' = 2\kappa^2\hat{\eta}_\alpha, \\
\end{align*}
Therefore,
\begin{align*}
\hat{f}_\alpha'' &= -\frac{2}{3}\kappa\hat{\rho}^{-1}\hat{\rho}'(X+Y+Z)
+ \kappa^2\hat{\rho}^{-2}A(X^2 + Y^2 + Z^2) - \frac{2}{3}\kappa\hat{\rho}^{-1}\hat{\rho}'A' + (\hat{h}_1'+\hat{h}_2'+\hat{h}_3') -\kappa^2\hat{\eta}_\alpha,
\\
\hat{G}_\alpha(0)&=-\frac{2}{3}\kappa\hat{\rho}^{-1}\hat{\rho}'(X+Y+Z)
+ \kappa^2\hat{\rho}^{-2}A(X^2 + Y^2 + Z^2) - \frac{2}{3}\kappa\hat{\rho}^{-1}\hat{\rho}'A' + (\hat{h}_1'+\hat{h}_2'+\hat{h}_3').
\end{align*}
Similarly,
\begin{align*}
\hat{f}_\beta'' &= -\frac{2}{3}\kappa\hat{\rho}^{-1}\hat{\rho}'B' + \kappa^2\hat{\rho}^{-2}B(X^2+Y^2+Z^2) -\kappa^2\hat{\eta}_\beta,
\\
\hat{G}_\beta(0) &= -\frac{2}{3}\kappa\hat{\rho}^{-1}\hat{\rho}'B' + \kappa^2\hat{\rho}^{-2}B(X^2+Y^2+Z^2).
\end{align*}

We now evaluate $(\hat{h}_1'\hat{h}_2'\hat{h}_3+\hat{h}_1'\hat{h}_2\hat{h}_3'+\hat{h}_1\hat{h}_2'\hat{h}_3')$. Observe that
\begin{align*}
    0&=\hat{q}^{-3}((\hat{h}_1\hat{h}_2\hat{h}_3)')^2 
    \\
    &=\hat{q}^{3}((\hat{h}_1^{-1}\hat{h}_1')^2+(\hat{h}_2^{-1}\hat{h}_2')^2+(\hat{h}_3^{-1}\hat{h}_3')^2) +2(\hat{h}_1'\hat{h}_2'\hat{h}_3+\hat{h}_1'\hat{h}_2\hat{h}_3'+\hat{h}_1\hat{h}_2'\hat{h}_3').
\end{align*}
From~\eqref{DeTurck_ini_cont},
\begin{align*}
    (\hat{h}_1^{-1}\hat{h}_1')^2 &= \Big(\frac{4}{3}\hat{\rho}^{-1}\hat{\rho}'-2\kappa\hat{\rho}^{-2}AX + \kappa X^{-1}X'\Big)^2\\
    &=\frac{16}{9}\hat{\rho}^{-2}(\hat{\rho}')^2+4\kappa^2\hat{\rho}^{-4}A^2X^2+\kappa^2X^{-2}X'^2
    \\
    &\hphantom{=}~-\frac{16}{3}\kappa\hat{\rho}^{-3}\hat{\rho}'AX+\frac{8}{3}\kappa\hat{\rho}^{-1}\hat{\rho}'X^{-1}X' - 4\kappa^2\hat{\rho}^{-2}AX'.
\end{align*}
One can obtain similar expressions for $(\hat{h}_2^{-1}\hat{h}_2')^2$ and $(\hat{h}_3^{-1}\hat{h}_3')^2$. Thus,
\begin{align*}
    -2\kappa^{-4}(\hat{h}_1'\hat{h}_2'\hat{h}_3&+\hat{h}_1'\hat{h}_2\hat{h}_3'+\hat{h}_1\hat{h}_2'\hat{h}_3')
    \\&= \frac{16}{3}\kappa^{-1}(\hat{\rho}')^2 + 4\kappa\hat{\rho}^{-2}A^2(X^2+Y^2+Z^2)\\
    &\hphantom{=}+ \kappa\hat{\rho}^2(X^{-2}X'^2+Y^{-2}Y'^2+Z^{-2}Z'^2)
    -\frac{16}{3}\hat{\rho}^{-1}\hat{\rho}'A(X+Y+Z)\\
    &\hphantom{=}+ \frac{8}{3}\hat{\rho}\hat{\rho}'(X^{-1}X'+Y^{-1}Y'+Z^{-1}Z')
    -4\kappa A(X'+Y'+Z').
\end{align*}

We are nearly ready to evaluate the third equation in~\eqref{transformed Poisson}. Clearly,
$$
\hat{\rho}''=-\hat{\rho}^{-1}(\hat{\rho}')^2+\hat{\rho}^{-1}((\hat{f}_\alpha')^2+(\hat{f}_\beta')^2+\hat{f}_\alpha \hat{f}_\alpha''+\hat{f}_\beta \hat{f}_\beta'').
$$
Combining this with~\eqref{ini_DeT_fh_red} and the formulas for $\hat f_\alpha''$ and $\hat f_\beta''$ produced earlier, we transform~\eqref{main} into
\begin{align*}
0 &= -4\kappa A'(X+Y+Z) + 8\kappa^{-1}(\hat{\rho}')^2 - 4\kappa( A'^2 + B'^2) - 4\hat{\rho}^{-1}\hat{\rho}'A(X+Y+Z) 
\\ 
&\hphantom{=}~+ \frac{8}{3}\hat{\rho}^{-1}\hat{\rho}'(AA'+BB')
+ 4\kappa A\hat{\eta}_\alpha + 4\kappa B\hat{\eta}_\beta - 4\kappa\hat{\rho}^{-2}B^2(X^2+Y^2+Z^2) 
\\
&\hphantom{=}~+ \kappa\hat{\rho}^2(X^{-2}X'^2+Y^{-2}Y'^2+Z^{-2}Z'^2)
+ \frac{8}{3}\hat{\rho}\hat{\rho}'(X^{-1}X'+Y^{-1}Y'+Z^{-1}Z') 
\\
&\hphantom{=}~- 4\kappa A(X'+Y'+Z') - (\hat{h}_1\hat{h}_2\hat{h}_3)''(\hat{\eta}_1YZ+\hat{\eta}_2XZ+\hat{\eta}_3XY).
\end{align*}
The first equation in Lemma~\ref{suff_con - reducible case} implies
\begin{equation*}
\frac{8}{3}\hat{\rho}\hat{\rho}'(X^{-1}X'+Y^{-1}Y'+Z^{-1}Z')= -\frac{32}{3}\hat{\rho}^{-1}\hat{\rho}'(AA'+BB').
\end{equation*}
With this,~\eqref{main} takes the form
\begin{align*}
0 &= -4\kappa A'(X+Y+Z) + 8\kappa^{-1}(\hat{\rho}')^2 - 4\kappa (A'^2 + B'^2) -4A\hat{\rho}^{-1}\hat{\rho}'(X+Y+Z) - 8\hat{\rho}^{-1}\hat{\rho}'(AA'+BB')
\\
&\hphantom{=}~+ 4\kappa A\hat{\eta}_\alpha + 4\kappa B\hat{\eta}_\beta - 4\kappa\hat{\rho}^{-2}B^2(X^2+Y^2+Z^2) + \kappa\hat{\rho}^2(X^{-2}X'^2+Y^{-2}Y'^2+Z^{-2}Z'^2)
\\
&\hphantom{=}~- 4\kappa A(X'+Y'+Z') - (\hat{h}_1\hat{h}_2\hat{h}_3)''(\hat{\eta}_1YZ+\hat{\eta}_2XZ+\hat{\eta}_3XY).
\end{align*}
Finally, 
$$
\hat{\rho}'=\kappa\hat{\rho}^{-1}\Big(\frac{1}{2}(X+Y+Z)A+AA'+BB'\Big).
$$
As a consequence,
\begin{align*}
8\kappa^{-1}(\hat{\rho}')^2 -4A\hat{\rho}^{-1}\hat{\rho}'(X+Y+Z)
-8\hat{\rho}^{-1}\hat{\rho}'(AA'+BB')=0,
\end{align*}
and~\eqref{main} becomes
\begin{align*}
    0 &= -4\kappa A'(X+Y+Z) - 4\kappa(A'^2 + B'^2) + 4\kappa(\hat{\eta}_\alpha A + \hat{\eta}_\beta B) -4\kappa\hat{\rho}^{-2}B^2(X^2+Y^2+Z^2)\\
    &\hphantom{=}~+ \kappa\hat{\rho}^2(X^{-2}X'^2+Y^{-2}Y'^2+Z^{-2}Z'^2) - 4\kappa A(X'+Y'+Z') 
    \\
    &\hphantom{=}~-(\hat{h}_1\hat{h}_2\hat{h}_3)''(\hat{\eta}_1YZ+\hat{\eta}_2XZ+\hat{\eta}_3XY).
\end{align*}
Together with Lemma~\ref{suff_con - reducible case} and the formula $\hat{\rho}^2=\rho^2=XYZ$, this identity implies that $(\hat{h}_1\hat{h}_2\hat{h}_3)''$ equals~$\kappa$.
\end{proof}

\subsubsection{Uniqueness}\label{subsubsec_uniq_red}

Let $\phi_1$ and $\phi_2$ be solutions to~\eqref{Poisson_equation}--\eqref{initial_conditions} in a neighbourhood of~$\Sigma$ with
\begin{equation*}
\phi_i=h_{1i}\,dr\wedge\omega_1+h_{2i}\,dr\wedge\omega_2+h_{3i}\,dr\wedge\omega_3+ f_{\alpha i}\,\alpha + f_{\beta i}\,\beta,\qquad i=1,2.
\end{equation*}
Choose $\kappa>0$ and consider the initial-value problems
\begin{align*}
h_{*i}''=\sqrt[3]{\frac{h_{*i}}{(h_{1i}h_{2i}h_{3i})\circ h_{*i}'}},\qquad h_{*i}(0)=\kappa^3 XYZ,\qquad h_{*i}'(0)=0,\qquad i=1,2.
\end{align*}
By the Picard--Lindel\"of theorem, there exists an interval $(-\epsilon,\epsilon)$ on which each of these problems has a unique solution. We choose $\epsilon$ small enough to ensure that $h_{*i}$ and $h_{*i}''$ are nonzero on~$(-\epsilon,\epsilon)$.
Denote by $\Phi_i$ the diffeomorphisms between neighbourhoods of $\Sigma$ induced by the maps $r\mapsto h_{*i}'(r)$. If
$$\hat{\phi}_i=\hat h_{1i}\,dr\wedge\omega_1+\hat h_{2i}\,dr\wedge\omega_2+\hat h_{3i}\,dr\wedge\omega_3+ \hat f_{\alpha i}\,\alpha + \hat f_{\beta i}\,\beta=\Phi_i^*\phi_i,\qquad i=1,2,$$ 
then
\begin{equation*}
\hat{h}_{1i}=(h_{1i}\circ h_{*i})\,h_{*i}'',\qquad \hat{h}_{2i}=(h_{2i}\circ h_{*i})\,h_{*i}'',\qquad \hat{h}_{3i}=(h_{3i}\circ h_{*i})\,h_{*i}''.
\end{equation*}
Consequently,
\begin{equation*}
\hat{h}_{1i}\hat{h}_{2i}\hat{h}_{3i}=((h_{1i}h_{2i}h_{3i})\circ h_{*i}')(h_{*i}'')^3 =((h_{1i}h_{2i}h_{3i})\circ h_{*i}') \frac{h_{*i}}{(h_{1i}h_{2i}h_{3i})\circ h_{*i}'}=h_{*i}.
\end{equation*}
Calculating as in the previous section, we conclude that the Cauchy problems
\begin{align*}
\Delta_{\hat\phi_i}\hat\phi_i&=\Psi_i^*\eta,\qquad 
\hat\phi_i|_\Sigma=(\Psi_i^*\psi)|_\Sigma,\qquad 
(d\hat\phi_i)_{\scperp}=\psi^+,\qquad (\hat\delta\hat\phi_i)_{\scparal}=\psi^-,\qquad i=1,2,
\end{align*}
are both equivalent to system~\eqref{transformed Poisson} with initial conditions~\eqref{ini_DeT_fh_red} and~\eqref{DeTurck_ini_cont}. The Picard--Lindel\"of theorem implies that $\hat{\phi}_1=\hat{\phi}_2$, which means $\Psi_1=\Psi_2$ and
\begin{equation*}
\phi_1=(\Psi_1^{-1})^*\hat\phi_1=(\Psi_2^{-1})^*\hat\phi_2=\phi_2,
\end{equation*}
in a neighbourhood of $\Sigma$.

\subsection{Projective orbits}\label{subsec_proj_orb}

Assume that $G=Sp(2)$ and $K=SU(2)U(1)$. The discussion on pages~207--208 of~\cite{CS02} demonstrates that, in this case, the Cauchy problem~\eqref{Poisson_equation}--\eqref{initial_conditions} reduces to~\eqref{Poisson - reducible case},~\eqref{ini_fh_red} and~\eqref{ini_fh_red_cont} with 
$\eta_2=\eta_3$, $Y=Z$ and $Y'=Z'$. The results of Sections~\ref{subsubsec_exist_red} and~\ref{subsubsec_uniq_red} yield the existence and uniqueness of $h_1$, $h_2$, $h_3$, $f_\alpha$ and $f_\beta$ satisfying ~\eqref{Poisson - reducible case},~\eqref{ini_fh_red} and~\eqref{ini_fh_red_cont} near $r=0$. An elementary standard argument shows that $h_2=h_3$.
This proves Theorem~B.

\section{Closed $G_2$-structures}\label{sec_closed}

Restricting one's attention to closed $G_2$-structures often leads to significant benefits in $G_2$-geometry. For example, such a restriction may simplify the analysis of the $G_2$-Laplacian and associated equations. While not elliptic in the traditional sense, this operator enjoys ellipticity in the direction of closed $G_2$-structures up to diffeomorphism~\cite{BX11,L20}. The corollary following Theorem~A demonstrates how our integral Gauss formula transforms when~$d\phi=0$. In this section, we discuss the solvability of the Poisson equation in the space of closed $G_2$-structures. Remarkably, we need to add one more formula to~\eqref{suff_con} in order to obtain an analogue of Theorem~B. More precisely, we have the following result.


\begin{theorem}\label{thm_closed}
Consider a manifold $M$ with closed $G_2$-structure~$\psi$ and a compact orientable embedded hypersurface $\Sigma$ in~$M$. Assume that the background metric $g_0$ coincides with the metric induced by~$\psi$. Given a closed 3-form $\eta$ on~$M$ and a (not necessarily closed) 2-form $\psi^-$ on~$\Sigma$, if the problem
\begin{align}\label{Poisson_ini_closed}
\Delta_\phi\phi=\eta,\qquad d\phi=0,\qquad \phi|_{\Sigma}=\psi|_{\Sigma},\qquad (\delta\phi)_{\scparal}=\psi^-,
\end{align}
has a solution in a neighbourhood of~$\Sigma$, then
\begin{equation}\label{sufcon_closed}
\begin{split}
\langle\psi^-,\psi_{\scperp}\rangle&=0,
\qquad \eta_{{\scparal}}=d^\Sigma\psi^-,
\\
\int_\Sigma \langle\eta,\nu\wedge\psi_{\scperp}+\psi_{\scparal}\rangle -2\int_\Sigma\langle \eta_{\scperp},\psi_{\scperp}\rangle
&=\big\|\delta^\Sigma\psi_{\scparal}\big\|_{L^2}^2-\big\|\delta^\Sigma\psi_{\scperp}\big\|_{L^2}^2
- 
\big\|\psi^-- \delta^\Sigma\psi_{\scparal}\big\|^2_{L^2}.
\end{split}
\end{equation}
If $M$ is cohomogeneity one with the symmetry group $G$ compact, connected and simple, $\Sigma$ is a principal orbit, the data $\eta$, $\psi$ and $\psi^-$ are $G$-invariant, and inequality~(\ref{nondeg_assumtion}) and formulas~(\ref{sufcon_closed}) hold, then $\Sigma$ has a neighbourhood in which problem~(\ref{Poisson_ini_closed}) possesses a unique \mbox{$G$-invariant} solution.
\end{theorem}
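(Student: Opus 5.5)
The necessity part follows directly from results already established. Suppose $\phi$ solves~\eqref{Poisson_ini_closed}. Since $d\phi=0$, we have $(d\phi)_{\scperp}=0$, so formula~\eqref{first order condition} (Proposition~\ref{prop_1st_order_cond}) yields $\langle\psi^-,\psi_{\scperp}\rangle=0$. Remark~\ref{rem_2nd_order_closed} gives $\eta_{\scparal}=(\Delta_\phi\phi)_{\scparal}=d^\Sigma(\delta\phi)_{\scparal}=d^\Sigma\psi^-$. Finally, the second formula of Corollary~\ref{cor_ThmA} becomes the last line of~\eqref{sufcon_closed} after we substitute $\phi|_\Sigma=\psi|_\Sigma$ and $(\delta\phi)_{\scparal}=\psi^-$.

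For sufficiency and uniqueness I would reduce to Theorem~B with $\psi^+:=0$.

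\emph{Checking the hypotheses of Theorem~B.} Since $\psi$ is closed, formula~\eqref{(co)differential in adapted frame} gives $d^\Sigma\psi_{\scparal}=0$ and $(d\psi)_{\scperp}+d^\Sigma\psi_{\scperp}=0$ on $\Sigma$.
\begin{itemize}
\item The first line of~\eqref{suff_con} with $\psi^+=0$ is exactly $\langle\psi^-,\psi_{\scperp}\rangle=0$.
\item In the second line of~\eqref{suff_con}, the term $\|\psi^++d^\Sigma\psi_{\scperp}\|^2$ cancels against $\|d^\Sigma\psi_{\scperp}\|^2$, and $\|d^\Sigma\psi_{\scparal}\|^2$ vanishes. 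What remains is precisely the last line of~\eqref{sufcon_closed}.
\end{itemize}
Theorem~B therefore produces a unique $G$-invariant $\phi$ near $\Sigma$ with $\Delta_\phi\phi=\eta$, $\phi|_\Sigma=\psi|_\Sigma$, $(d\phi)_{\scperp}=0$ and $(\delta\phi)_{\scparal}=\psi^-$.

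\emph{Uniqueness.} Any $G$-invariant solution of~\eqref{Poisson_ini_closed} also solves this problem, so uniqueness is immediate once existence is settled.

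\emph{The remaining task: showing $\phi$ is closed.} I would do this orbit type by orbit type, using the explicit forms of Section~\ref{sec_Poisson}.

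For $SU(3)/\mathbb T^2$ (and hence the projective case, which reduces to it), Lemma~\ref{lem_calcs_red} gives
\[
d\phi=-\tfrac12 c\,dr\wedge\alpha+f_\beta'\,dr\wedge\beta-2f_\beta(\omega_{12}+\omega_{23}+\omega_{13}),\qquad c=h_1+h_2+h_3-2f_\alpha'.
\]
\begin{itemize}
\item \emph{The $f_\beta$ component.} Closedness of $\eta$ kills its $\omega_{12}+\omega_{23}+\omega_{13}$ component, so $\eta_\beta\equiv0$. The last equation of~\eqref{sys_raw_Poisson_red} is then a linear homogeneous second-order ODE for $f_\beta$. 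Its initial data are $B=0$ (from $d\psi=0$ via $d\beta=-2\sum\omega_{jk}$) and $B'=0$ (from $\psi^+=0$). Hence $f_\beta\equiv0$.
\item \emph{The $c$ component.} We have $c(0)=0$ by~\eqref{ini_fh_red} with $A'=0$. Applying $d$ to $\Delta_\phi\phi=\eta$ gives $d\delta d\phi=0$. Using the expression for $\delta d\phi$ computed in the proof of Proposition~\ref{G2 Laplacian - reducible case}, I expect this to become a linear homogeneous ODE for $c$. If that ODE is second order, I would extract $c'(0)=0$ from the $\alpha$-component of the condition $\eta_{\scparal}=d^\Sigma\psi^-$ evaluated at $r=0$, combined with the fourth equation of~\eqref{sys_raw_Poisson_red}. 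Uniqueness for the linear ODE then forces $c\equiv0$.
\end{itemize}

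For the spherical orbit, Remark~\ref{rem_closed_irred} shows that closed invariant structures are coclosed. Moreover, $\langle\psi^-,\psi_{\scperp}\rangle=3\rho_0^{-4/3}XX'=0$ forces $\psi^-=0$. I expect~\eqref{sufcon_closed} together with $\eta_r(0)\neq0$ to be incompatible (since $\Delta_\phi\phi=0$ there), so the statement is vacuous in that case. I would verify this directly using Lemma~\ref{suff_con - irreducible case}.

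The main obstacle is showing that the linear ODE for $c$ has the right order and initial data, i.e.\ carefully extracting the $c$-dependence of $d\delta d\phi$ and matching $c'(0)=0$ with the condition $\eta_{\scparal}=d^\Sigma\psi^-$.
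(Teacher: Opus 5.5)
Your proof is correct, but it runs the closedness constraint in the opposite direction from the paper. The paper builds closedness into the ansatz ($f_\beta\equiv0$, $2f_\alpha'=h_1+h_2+h_3$) and solves the three degenerate $h_i$-equations by repeating the DeTurck argument of Section~\ref{subsubsec_exist_red}. It then recovers the $\alpha$-component of $\Delta_\phi\phi=\eta$. Because $d\eta=0$ gives $\eta_\alpha'=\frac12(\eta_1+\eta_2+\eta_3)$, the derivative of that equation holds automatically, and at $r=0$ it reduces to $X'+Y'+Z'=2\eta_\alpha(0)$, which is the $\alpha$-part of $\eta_{\scparal}=d^\Sigma\psi^-$.

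You instead take the full solution from Theorem~B with $\psi^+=0$ and propagate $d\phi=0$ using $d\delta d\phi=d\eta=0$. The step you were unsure about does work. Set $c=h_1+h_2+h_3-2f_\alpha'$ and $u=\rho^{2/3}q^{-1}c$. The $dr\wedge\alpha$-component of $d\delta d\phi=0$ is then $(\rho^{2/3}q^{-1}u')'=\rho^{-4/3}q^{-1}(h_1^2+h_2^2+h_3^2)\,u$. This is a second-order linear ODE with nonvanishing leading coefficient, and it does not involve $f_\beta$. You have $u(0)=0$ because $\psi^+=0$. For the derivative, note that $(\delta\phi)_{\scparal}=\psi^-$ makes $d\delta\phi$ contribute $\frac12(X'+Y'+Z')$ to the $\alpha$-coefficient. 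So the $\alpha$-component of $\Delta_\phi\phi=\eta$ at $r=0$ reads $\eta_\alpha(0)=\frac12(X'+Y'+Z')+\frac12\rho(0)^{2/3}q(0)^{-1}u'(0)$, which forces $u'(0)=0$. Both proofs therefore spend the hypothesis $\eta_{\scparal}=d^\Sigma\psi^-$ at exactly the same point.

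Your route has two advantages. It uses Theorem~B as a black box, so you need not redo, for the closed system, the DeTurck construction and the analogue of Lemma~\ref{lem_big_beautiful_lemma}; the paper invokes these only by analogy. It also shows clearly that $\eta_{\scparal}=d^\Sigma\psi^-$ is exactly the normal-derivative Cauchy datum for $d\phi$. The paper's route, in turn, works with a smaller system and stays within closed forms throughout.

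Your planned direct check in the spherical case also goes through. With $\psi^-=0$, $\eta_{\scparal}=0$ and $\delta^\Sigma\psi_{\scperp}=0$, the integral identity collapses to $\langle\eta_{\scperp},\psi_{\scperp}\rangle=0$, contradicting the nondegeneracy assumption.

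One harmless slip: $d\psi=0$ does not give $(d\psi)_{\scperp}+d^\Sigma\psi_{\scperp}=0$. That would mean $d^\Sigma\psi_{\scperp}=0$, which is false in the flag case, where it equals $\frac12(X+Y+Z)\alpha$. You never use it, though, since the cancellation in~\eqref{suff_con} comes from $\psi^+=0$.
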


\begin{proof}
The first assertion of Theorem~\ref{thm_closed} follows from Corollary~\eqref{cor_ThmA} and from Remark~\ref{rem_2nd_order_closed}. To prove the second assertion, we consider the three possible orbit types as in Section~\ref{sec_Poisson}. If $G=G_2$ and $K=SU(3)$, then every closed form on $M$ must be coclosed; see Remark~\ref{rem_closed_irred}. Using the notation of Section~\ref{subsec_spherical} and calculating as in Section~\ref{subsubsec_exist_irred}, we find that
the equalities $d\eta=d\psi=0$ and~\eqref{sufcon_closed} imply
\begin{align*}
B=0,\qquad \eta_\beta(0)&=0,\qquad X'=0,\qquad \eta_\alpha(0)=0, \\
\eta_r(0)&=\frac43\rho_0^{-\frac23}(\rho_0^{-\frac43}X(A\eta_\alpha(0)+B\eta_\beta(0))-12\rho_0^{-2}XB^2)=0.
\end{align*}
However, this contradicts the assumption that $\langle \eta_{\scperp},\psi_{\scperp}\rangle$ does not vanish. Thus, the second assertion of Theorem~\ref{thm_closed} is vacuous.

Suppose that $G=SU(3)$ and $K=\mathbb T^2$. Since $\eta$ and $\phi$ are closed, the functions $\eta_{\beta}$ and $f_{\beta}$ are identically zero. With the notation of Section~\ref{subsec_flag_mf}, Lemma~\ref{lem_calcs_red} and Proposition~\ref{G2 Laplacian - reducible case} reduce problem~\eqref{Poisson_ini_closed} to the system
\begin{align}\label{sys_raw_Poisson_closed}
\begin{split}
h_1+h_2+h_3-2f_\alpha'&=0, \\
-(q^{-3}h_1^2((\rho^\frac{4}{3}qh_1^{-1})'-2\rho^{-\frac{2}{3}}q f_\alpha))'&=\eta_1,
\\
-(q^{-3}h_2^2((\rho^\frac{4}{3}qh_2^{-1})'-2\rho^{-\frac{2}{3}}q f_\alpha))'&=\eta_2,
\\
-(q^{-3}h_3^2((\rho^\frac{4}{3}qh_3^{-1})'-2\rho^{-\frac{2}{3}}q f_\alpha))'&=\eta_3,
\\
-\frac{1}{2}q^{-3}(h_1^2(\rho^\frac{4}{3}qh_1^{-1})' + h_2^2(\rho^\frac{4}{3}qh_2^{-1})' + h_3^2(\rho^\frac{4}{3}qh_3^{-1})')+\rho^{-\frac{2}{3}}q^{-2}f_\alpha(h_1^2 + h_2^2 + h_3^2)&=\eta_\alpha,
\end{split}
\end{align}
with Cauchy conditions
\begin{align}\label{inicon_ODE_closed}
\begin{split}
f_\alpha(0)=A,\quad h_1(0)=X,\quad h_2(0)=Y,\quad h_3(0)&=Z, 
\\
-\frac23A^{-1}(X+Y+Z)X+2A^{-1}X^2+\frac23h_1'(0)
-\frac13XY^{-1}h_2'(0)-\frac13XZ^{-1}h_3'(0)
&=X',
\\
-\frac23A^{-1}(X+Y+Z)Y+2A^{-1}Y^2-\frac13X^{-1}Yh_1'(0)
+\frac23h_2'(0)-\frac13YZ^{-1}h_3'(0)
&=Y',
\\
-\frac23A^{-1}(X+Y+Z)Z+2A^{-1}Z^2-\frac13X^{-1}Zh_1'(0)
-\frac13Y^{-1}Zh_2'(0)+\frac23h_3'(0)
&=Z'.
\end{split}
\end{align}
The vector of the leading terms of the first four equations in~\eqref{sys_raw_Poisson_closed} appears as
\begin{align*}
\tilde M(h_1,h_2,h_3)\left(
\begin{matrix}
f_\alpha' \\
h_1'' \\
h_2'' \\
h_3''
\end{matrix}\right),
\qquad
\tilde M(x,y,z)=
\begin{pmatrix}
 \hphantom{x}1\hphantom{y} & 0 & 0 & 0 \\
\hphantom{x}0\hphantom{y} & 2yz & -xz & -xy \\
\hphantom{x}0\hphantom{y} & -yz & 2xz & -xy \\
\hphantom{x}0\hphantom{y} & -yz & -xz & 2xy
\end{pmatrix}.
\end{align*}
Conditions~\eqref{inicon_ODE_closed} 
take the form
\begin{align*}
\left(
\begin{matrix}
f_\alpha(0) \\
h_1(0) \\
h_2(0) \\
h_3(0)
\end{matrix}\right)
=\left(
\begin{matrix}
A \\
X \\
Y \\
Z
\end{matrix}\right),\qquad \tilde M(X,Y,Z)\left(
\begin{matrix}
0 \\
h_1'(0) \\
h_2'(0) \\
h_3'(0)
\end{matrix}\right)
=
\left(
\begin{matrix}
0
\\
F_1 \\
F_2\\
F_3
\end{matrix}\right),
\end{align*}
with $F_1$, $F_2$ and $F_3$ dependent on $A$, $X$, $Y$, $Z$, $X'$, $Y'$ and~$Z'$. Reasoning as in Section~\ref{subsubsec_exist_red}, we can prove the existence of $f_\alpha$, $h_1$, $h_2$ and $h_3$ that solve the first four equations in~\eqref{sys_raw_Poisson_closed}, subject to~\eqref{inicon_ODE_closed}, near $r=0$. Let us show that these functions satisfy the fifth equation. This will immediately imply the solvability of~\eqref{Poisson_ini_closed}.

Adding together the second, third and fourth lines of~\eqref{sys_raw_Poisson_closed}, and recalling that $\eta$ is closed, yields
\begin{align*}
\begin{split}
-(q^{-3}(h_1^2(\rho^\frac{4}{3}qh_1^{-1})' + h_2^2(\rho^\frac{4}{3}qh_2^{-1})' + h_3^2(\rho^\frac{4}{3}qh_3^{-1})'))'+2(\rho^{-\frac{2}{3}}q^{-2}f_\alpha(h_1^2 + h_2^2 + h_3^2))'&=2\eta_\alpha'.
\end{split}
\end{align*}
Ergo, it suffices to show that the fifth equation in~\eqref{sys_raw_Poisson_closed} holds at $r=0$, i.e.,
\begin{align}\label{extracon_closed_raw}
\begin{split}
-\frac{1}{2}(XYZ)^{-1}(X^2(\rho^\frac{4}{3}qh_1^{-1})'(0) + Y^2(\rho^\frac{4}{3}qh_2^{-1})'(0) &+ Z^2(\rho^\frac{4}{3}qh_3^{-1})'(0))
\\ &+\rho_0^{-\frac{2}{3}}(XYZ)^{-\frac23}A(X^2 + Y^2 + Z^2)=\eta_\alpha(0).
\end{split}
\end{align}
We simplify and note that $X+Y+Z-2f_\alpha'(0)$ vanishes to conclude that~\eqref{extracon_closed_raw} is equivalent to
\begin{align*}
X'+Y'+Z'=2\eta_\alpha(0).
\end{align*}
However, this formula follows from the assumption $\eta_{{\scparal}}=d^\Sigma\psi^-$, i.e.,
\begin{equation*}
    \eta_\alpha(0)\alpha = \frac{1}{2}(X'+Y'+Z')\alpha.
\end{equation*}
This concludes the proof of the existence of the $G_2$-structure $\phi$ satisfying~\eqref{Poisson_ini_closed}. The uniqueness of $\phi$ follows from Theorem~B.

Finally, we can handle the last orbit type, i.e., the case where $G=Sp(2)$ and $K=SU(2)U(1)$, arguing as in Section~\ref{subsec_proj_orb}.
\end{proof}

\begin{cor}
Let $M$ be a cohomogeneity one manifold with compact, connected and simple symmetry group~$G$. Consider a closed $G$-invariant $G_2$-structure $\psi$ on~$M$, a compact orientable embedded hypersurface $\Sigma$ in~$M$, and a metric $g_0$ on $M$ as in Theorem~\ref{thm_closed}. Assume that $\Sigma$ is a principal orbit and $\phi$ is a $G$-invariant solution to the problem
\begin{align*}
\Delta_\phi\phi=\eta,\qquad \phi|_{\Sigma}=\psi|_{\Sigma},\qquad
(d\phi)_{\scperp}=0,\qquad (\delta\phi)_{\scparal}=\psi^-,
\end{align*}
in a neighbourhood of~$\Sigma$ with $d\eta=0$ and $\langle \eta_{\scperp},\psi_{\scperp}\rangle$ nonzero. The $G_2$-structure $\phi$ is closed if and only if $\eta_{{\scparal}}=d^\Sigma\psi^-$.
\end{cor}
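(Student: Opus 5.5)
The forward direction follows from Remark~\ref{rem_2nd_order_closed}. The converse will follow by comparing $\phi$ with the closed solution supplied by Theorem~\ref{thm_closed} and invoking the uniqueness part of Theorem~B.

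\emph{Closed implies the condition.} If $d\phi=0$, Remark~\ref{rem_2nd_order_closed} gives $(\Delta_\phi\phi)_{\scparal}=d^\Sigma(\delta\phi)_{\scparal}$ on~$\Sigma$. Since $\Delta_\phi\phi=\eta$ and $(\delta\phi)_{\scparal}=\psi^-$, this reads $\eta_{\scparal}=d^\Sigma\psi^-$.

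\emph{The condition implies closed: checking the hypotheses of Theorem~\ref{thm_closed}.} Assume $\eta_{\scparal}=d^\Sigma\psi^-$.
\begin{enumerate}[label=(\alph*)]
\item The data $\eta=\Delta_\phi\phi$ and $\psi^-=(\delta\phi)_{\scparal}$ are $G$-invariant because $\phi$ is. The form $\eta$ is closed and $\langle\eta_{\scperp},\psi_{\scperp}\rangle\neq0$ by assumption.
\item The given $\phi$ solves problem~\eqref{Poisson_equation}--\eqref{initial_conditions} with $\psi^+=0$. Hence the first assertion of Theorem~B applies and formulas~\eqref{suff_con} hold with $\psi^+=0$.
\item The first line of~\eqref{suff_con} then gives $\langle\psi^-,\psi_{\scperp}\rangle=0$.
\item The equality $\eta_{\scparal}=d^\Sigma\psi^-$ is our hypothesis.
\end{enumerate}

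For the integral identity in~\eqref{sufcon_closed}, I would use $\psi^+=0$ together with $d^\Sigma\psi_{\scparal}=d^\Sigma(i^*\psi)=i^*d\psi=0$, valid since $\psi$ is closed. Substituting these into the second line of~\eqref{suff_con}, the terms $-\|d^\Sigma\psi_{\scperp}\|^2_{L^2}$ and $+\|\psi^++d^\Sigma\psi_{\scperp}\|^2_{L^2}$ cancel. The term $\|d^\Sigma\psi_{\scparal}\|^2_{L^2}$ vanishes. What remains is exactly the third line of~\eqref{sufcon_closed}.

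\emph{Identifying the two solutions.} Theorem~\ref{thm_closed} now yields a $G$-invariant closed $G_2$-structure $\tilde\phi$ near $\Sigma$ with
\begin{align*}
\Delta_{\tilde\phi}\tilde\phi=\eta,\qquad \tilde\phi|_\Sigma=\psi|_\Sigma,\qquad (\delta\tilde\phi)_{\scparal}=\psi^-.
\end{align*}
Since $d\tilde\phi=0$, also $(d\tilde\phi)_{\scperp}=0$. Thus $\tilde\phi$ and $\phi$ are two $G$-invariant solutions of the same Cauchy problem~\eqref{Poisson_equation}--\eqref{initial_conditions} with $\psi^+=0$. The nondegeneracy~\eqref{nondeg_assumtion} and formulas~\eqref{suff_con} hold by (a)--(c) above, so the uniqueness assertion of Theorem~B gives $\phi=\tilde\phi$ near~$\Sigma$. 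Hence $\phi$ is closed.

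The main point needing care is the bookkeeping in the integral-identity step, namely confirming that the norms of $d^\Sigma\psi_{\scperp}$ cancel exactly. A secondary point is that Theorem~\ref{thm_closed} is applied with the same $\psi$, and the hypothesis that $\psi$ is closed is what kills $d^\Sigma\psi_{\scparal}$.
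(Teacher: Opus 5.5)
Your proof is correct and follows the route the paper intends. The forward direction is Remark~\ref{rem_2nd_order_closed}. For the converse, you verify the hypotheses of Theorem~\ref{thm_closed} from the first assertion of Theorem~B with $\psi^+=0$ together with $d^\Sigma\psi_{\scparal}=i^*d\psi=0$, and then identify $\phi$ with the resulting closed solution through the uniqueness part of Theorem~B. As in the paper's argument, this shows $\phi$ is closed on a neighbourhood of $\Sigma$ where that uniqueness applies, not a priori on the whole domain of $\phi$.
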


\end{document}